\numberwithin{equation}{section}
\pgfplotsset{compat=newest}
\theoremstyle{plain}
\newtheorem{thm}{Theorem}[section]
\newtheorem{lemma}[thm]{Lemma}
\newtheorem{prob}[thm]{Problem}
\newtheorem{alg}[thm]{Algorithm}
\theoremstyle{remark}
\newcommand{\R}{\mathbb{R}} 
\newcommand{\Con}{\mathcal{C}} 
\newcommand{\ve}{\varepsilon} 
\newcommand{\dd}{\,\mathrm{d}} 
\DeclareMathOperator{\supp}{supp} 
\DeclareMathOperator{\interior}{int}
\DeclareMathOperator{\id}{id}
\pgfplotsset{two_style/.style={
width=\textwidth,
height=0.25\textheight,
legend style={font=\footnotesize,inner sep=2pt,nodes={inner sep=2pt,text depth=0.1em}},
tick label style={font=\footnotesize},
label style={font=\small},
title style={font=\small},
max space between ticks={40},
},
three_style/.style={
width=\textwidth,
height=0.18\textheight,
legend style={font=\footnotesize,inner sep=2pt,nodes={inner sep=2pt,text depth=0.1em}},
tick label style={font=\scriptsize},
label style={font=\small},
title style={font=\small},
max space between ticks={40},
axis y line=left,
axis x line=middle,
try min ticks={4},
}
}
\begin{document}
\raggedbottom 

\title[Optimal control for the thin film equation]{Optimal control for the thin film equation: Convergence of a multi parameter approach to track state constraints avoiding degeneracies}
\keywords{thin-film equation, optimality conditions,penalty approach, optimal control of degenerate equation}
\subjclass[2010]{35K55, 35K65, 93C20, 93C95, 76A20} 
\date{\today}

\author{Markus Klein}
\address{Mathematisches Institut, Universität Tübingen, Auf der Morgenstelle 10, 72076 Tübingen, Germany.}
\email{klein@na.uni-tuebingen.de}

\author{Andreas Prohl}
\address{Mathematisches Institut, Universität Tübingen, Auf der Morgenstelle 10, 72076 Tübingen, Germany.}
\email{prohl@na.uni-tuebingen.de}

\thanks{The work supported by a DFG grant within the Priority Program SPP 1253 (Optimization with Partial Differential Equations). This work was performed on the computational resource bwUniCluster funded by the Ministry of Science, Research and Arts and the Universities of the State of Baden-Württemberg, Germany, within the framework program bwHPC; cf.~\cite{bwgrid}. The authors are grateful to the anonymous referees for their valuable comments, which helped to improve the quality of this article.}

\begin{abstract}
We consider an optimal control problem subject to the thin-film equation. The PDE constraint lacks well-posedness for general right-hand sides due to possible degeneracies; state constraints are used to circumvent this problematic issue and to ensure well-posedness. Necessary optimality conditions for the optimal control problem are then derived. A convergent multi-parameter regularization is considered which addresses both, the possibly degenerate term in the equation and the state constraint. Some computational studies are then
reported  which evidence the relevant role of the state constraint, and motivate proper scalings of involved regularization and numerical parameters.
\end{abstract}

\maketitle

\section{Introduction}

Let $\Omega=(a,b)\subset \R$, $0<T<\infty$, let $\lambda, C_0 >0$, $u\in L^2(H_0^1)$, and $H^2(\Omega)\ni 
y_0 \geq  C_0$ be given. The one-dimensional thin film equation with external control $u:\Omega_T \rightarrow {\mathbb R}$ reads as follows: Find $y:\Omega_T\to \R$ such that
($\beta > 1$)
\begin{align}
y_t = -(\lambda \vert y\vert^{\beta}y_{xxx})_x + u_x, \label{eq:thin_film}
\end{align}
together with the initial condition $y(0,.)=y_0$ and boundary conditions $y_x=y_{xxx}=0$ in $a,b$ for $0<t<T$. 

In this work, we study the following constrained optimization problem related to \eqref{eq:thin_film}.
\begin{prob} \label{prob:thin_film_opti}
Let $\tilde y\in L^2(\Omega_T)$ be given, $\alpha>0$, and $C_0>0$. Find a minimum $(y^*,u^*)\in L^2(H^4)\cap H^1(L^2)\times L^2(H_0^1)$ of
\[ J(y,u):= \frac12\int_0^T \int_{\Omega} \vert y-\tilde y\vert^2 \dd x \dd t + \frac{\alpha}2 \int_0^T \int_{\Omega} \vert u_x\vert^2 \dd x \dd t \]
subject to \eqref{eq:thin_film} and $y\geq C_0$ in $\Omega_T$. 
\end{prob}

The aim of this problem is to control the height $y$ of a fluid film via an external map $u_x:\Omega_T\to \R$,
while respecting a state constraint to prevent its rupture (i.e., $y(t,x) = 0$). The governing equation \eqref{eq:thin_film} is in divergence form to avoid evaporation or wetting effects. 
A~possible application of the above optimal control Problem~\ref{prob:thin_film_opti} is in the fabrication of electronic chips, where thin layers of different material are deposited on a Si wafer \cite{becker_nature}. For an efficient electronical circuit, each layer has to constitute a specific profile $\tilde{y}$ where the material should be deposited. The problem is to find external forces such that the solution of \eqref{eq:thin_film} is near this desired profile $\tilde y$.

Equation \eqref{eq:thin_film} is e.g.~derived in \cite{becker_gruen_lenz_rumpf,bertozzi_ams,oron1997}, see also
\cite{bertozzi_ams,gruen_entropy}: we consider the fluid layer to be thin, i.e., $\tau:=\text{height}/\text{length}\ll 1$. A non-dimensional transformation from the classical Navier--Stokes equation which is based on the small ratio $\tau$ and a Taylor expansion of the terms, together with the assumption of a so-called no slip boundary condition  (cf.~\cite[p. 936]{oron1997}) leads to an asymptotic expansion in~$\tau$. Neglecting higher order terms of $\tau$, and the proper use of boundary conditions then leads to \eqref{eq:thin_film}. 
We note that through the transformation process, a conservative force on the right-hand side of the Navier--Stokes equation transforms into an additional term $-(g_0(y)y_x)_x$ on the right-hand side in \eqref{eq:thin_film}, where $g_0$ denotes a potential function (see \cite{becker_gruen_lenz_rumpf} for details). Hence, a control problem for the Navier--Stokes equation with distributed conservative force (cf.~\cite{abergel_temam}) transforms `naturally' into an optimal control problem for the thin film equation where a potential function $g_0$ is to be found, instead of searching for a $L^2(H^1_0)$ control function $u$ as in Problem~\ref{prob:thin_film_opti} for \eqref{eq:thin_film}. 
However, it is not clear how to deal with such a problem in general, and there are only a few works in this direction which deal with inverse problems, see e.g., \cite{kaltenbacher_klibanov}. The authors are not aware if and how the methods which are used there can to transformed to more complicated scenarios such as \eqref{eq:thin_film}. Also, we are not aware how to practically construct such a potential function unless the potential function is specified to belong to a specific class of functions (e.g., polynomials, or a sum of other given potentials). Therefore, we will neglect such an $g_0$-term in this work.

The fundamental analytical work for the PDE \eqref{eq:thin_film} with $u\equiv 0$ is \cite{bernis_friedman}, where uniform
bounds in terms of the energy $E[y] = \frac{1}{2} \int_{\Omega} \vert y_x\vert^2 \, {\rm d}x$ for a solution of a regularized version of
\eqref{eq:thin_film} are used to construct a weak solution of \eqref{eq:thin_film}; cf.~\cite[Theorem 3.1]{bernis_friedman}.  Positivity of solutions of \eqref{eq:thin_film} with $u = 0$ for $\beta \geq 4$ is then
shown by a contradiction argument which uses the entropy functional $H[y] = \frac{1}{(\beta-1)(\beta-2)} \int_{\Omega} y^{2-\beta}\, {\rm d}x$; see~\cite[Theorem 4.1]{bernis_friedman}. {We briefly recapitulate this argument to show Lemma~\ref{lemma:thin_film_g0_gleich_0_positivity}, which later then settles 
solvability of Problem~\ref{prob:thin_film_opti}; see Theorem \ref{thm:thin_film_existence_opti_mod}.}

Our main goal is then to derive necessary optimality conditions for Problem~\ref{prob:thin_film_opti}. For this purpose,  we need to modify some of the proofs in \cite{bernis_friedman} to properly address  equation (\ref{eq:thin_film}) with general right-hand side $u_x$. 
In particular, it is the possibly degenerate character of the PDE (\ref{eq:thin_film}) which affects some energy arguments,
while the entropy argument mentioned above is not known to be valid
any more for $u \neq 0$; see also
Figure~\ref{fig:thin_film_negative_sol_only_state_bsp}, where an approximate solution taking values in $\R$ to a given non-trivial external force $u_x$ is displayed.

{This problematic issue of PDE (\ref{eq:thin_film}) is also apparent in
Problem~\ref{prob:thin_film_opti} ({\em in the absence of the state constraint}). To avoid it,} one strategy could be to only take into account those exterior forces $u_x$ for which a corresponding solution $y$ exists. Unfortunately, we cannot give a good characterization of such a set of controls, and we do not know topological properties of it. There are a few recent articles dealing with different degenerate optimal control problems which share this problematic issue; see e.g.,~\cite{clason_kaltenbacher_westervelt,clason_kaltenbacher_comp,clason_kaltenbacher_mems}.
A second strategy to overcome this problem is to ensure the strict positivity of solutions to 
\eqref{eq:thin_film}  by a state constraints as stated in Problem~\ref{prob:thin_film_opti}. General 
controls $u$ are admitted in this case, but to derive necessary optimality conditions becomes more involved.

We refer the reader to \cite{clason_kaltenbacher_comp} where both strategies are compared for a different equation, and the second scenario is given preference  to cope with possible degeneracies arising in the governing equation. It is argued that the set of external forces in the first scenario may not be rich enough, and therefore possible target profiles $\tilde{y}$ may not be reached. It is also in this work that we prefer the second strategy to study Problem~\ref{prob:thin_film_opti}.

Our first result is solvability of the optimal control Problem~\ref{prob:thin_film_opti}. Then, we use an abstract result for state constrained optimization problems from~\cite{alibert_raymond} to derive necessary optimality conditions for Problem~\ref{prob:thin_film_opti}. In order to overcome technical difficulties in the proofs in section \ref{sec:thin_film_opti_reg}, we need controls $u \in L^2(H_0^1)$ in \eqref{eq:thin_film}, which is already stated in Problem~\ref{prob:thin_film_opti}. This implies $y \in L^2(H^4)\cap H^1(L^2)$ for states, which is then sufficient to construct Lagrange multipliers in proper spaces.

The optimality conditions \eqref{eq:thin_film_opt_cond} involve non-regular Lagrange multipliers in the dual space of $L^2(H^4)\cap H^1(L^2)$, which hinders an immediate numerical treatment: a typical strategy to overcome this problem is to relax the state constraint $y\geq C_0$ by penalty approximation~\cite{bryson_buch}, Moreau-Yosida approximation \cite{hintermueller_kunisch_my}, or mixed control-state constraints (Lavrentiev regularization)~\cite{lavrentiev_reg}. We recall that the state constraint is not an additional requirement on admissible states, but is essential for the well-posedness of the PDE \eqref{eq:thin_film}.

We propose the following strategy to ensure well-posedness of \eqref{eq:thin_film} in the context of relaxation methods:\begin{itemize}
\item We establish regularity results for the {\em regularized} PDE (\ref{eq:thin_film_eps}) (section~\ref{sec:thin_film_state_equation_fixed_rhs}), then show solvability of Problem~\ref{prob:thin_film_opti}
and derive necessary optimality conditions (section~\ref{sec:thin_film_opt_original}).
\item In section~\ref{sec:thin_film_opti_reg}, we study the optimal control Problem~\ref{prob:thin_film_opti_thin_film_eps} which uses the regularized state equation (\ref{eq:thin_film_eps}) for $\varepsilon >0$. 
In particular, related optimality conditions (\ref{eq:thin_film_opt_cond_eps}) are derived, and convergence
to the necessary optimality conditions \eqref{eq:thin_film_opt_cond} of the original Problem \ref{prob:thin_film_opti} is shown. For this purpose, it is relevant to bound the norm of $u_x$ as it is given in the functional, which helps to properly bound all corresponding Lagrange multipliers. 
\item In section~\ref{sec:thin_film_penalty}, we show convergence of the penalty approach (Problem~\ref{prob:thin_film_opti_thin_film_eps_gamma}) towards Problem~\ref{prob:thin_film_opti_thin_film_eps} for $\gamma \rightarrow 0$.
Since the equality constraint in Problem~\ref{prob:thin_film_opti_thin_film_eps_gamma} is well-posed for every $\ve>0$, we may use a standard numerical approach to solve the corresponding optimality conditions \eqref{eq:thin_film_opt_cond_eps_gamma}. Corresponding computational studies are reported in section~\ref{sec:thin_film_comp} which study proper balancing of finite parameters $\varepsilon, \gamma >0$ to numerically
solve the necessary optimality conditions for Problem~\ref{prob:thin_film_opti_thin_film_eps_gamma}.
\end{itemize}

We emphasize the relevancy to study the intermediate optimization Problem~\ref{prob:thin_film_opti_thin_film_eps} since it is not clear how to simultaneously tend both regularization parameters to zero. In particular, the parameter $\gamma>0$ to regularize the state constraint is the first which tends to zero; it is here that we benefit from the well-posedness of the involved equality constraint for every $\ve>0$ to construct a solution of the intermediate optimization Problem~\ref{prob:thin_film_opti_thin_film_eps}. 

To our knowledge, this is the first work which deals with optimal control subject to the thin film equation. A regularization of Problem \ref{prob:thin_film_opti_thin_film_eps} ($\ve=1$) is studied 
in \cite{zhao_liu_thin_film}, which coincides with the intermediate optimal control problem  but without state constraints. However, existence for the limiting problem related to Problem~\ref{prob:thin_film_opti}, and a convergence analysis for $\ve\to0$ are left open in \cite{zhao_liu_thin_film}.

Throughout this article, we use the following notation: We write $\Vert.\Vert$ for the $L^2(\Omega)$ or $L^2(\Omega_T)$-norm when it is clear from the context that we only integrate in space or both, in space and time. Let $W^{k,p}$ and $H^k:=W^{k,2}$ denote standard Sobolev spaces. By 
\[ W^{k,p}(W^{m,q}):=W^{k,p}(0,T;W^{m,q}) \]
we refer the reader to standard Bochner spaces. The space $\Con$ ensembles continuous functions, while $\Con^{0,\alpha}$ denotes corresponding Hölder spaces. 

The dual pairing of $X$ and its dual space $X^*$ is denoted as $\langle .,.\rangle_{X,X^*}$. For the scalar products in $L^2$ and $L^2(L^2)$,  we write $(\cdot, \cdot)$ at places where no ambiguities arise; otherwise, we add the corresponding spaces as index to the scalar product.

We use $C$ as a generic non-negative constant; to indicate dependencies, we write $C(.)$.

\section{The regularized state equation} \label{sec:thin_film_state_equation_fixed_rhs}

We show properties of solutions of a regularization of the equation \eqref{eq:thin_film}. 
The arguments in this section adapt corresponding ones in \cite{bernis_friedman}.

\begin{prob}
Let $\lambda>0$, $\ve\geq 0$. Find $y:\Omega_T\to \R$ such that
\begin{align}
y_t = -\big( [\lambda \vert y\vert^\beta+\ve]y_{xxx}\big)_x + u_x, \label{eq:thin_film_eps}
\end{align}
together with initial condition $y(0)=y_0\in H^2(\Omega)$ and boundary conditions $y_x=y_{xxx}=0$ in $a,b$.
\end{prob}

\subsection{Regularity and properties of solutions}

\begin{lemma} \label{lemma:thin_film_equation_mass_conservation}
Let $\lambda>0$, $\ve\geq 0$, $u\in L^2(H_0^1)$, and $y$ be a weak solution of \eqref{eq:thin_film_eps}. Then, the mass is conserved, i.e.,
\begin{align}
\int_{\Omega} y(t,.)\dd x = \int_{\Omega} y_0 \dd x \quad \forall \,0\leq t\leq T. \label{eq:thin_film_massenerhaltung}
\end{align}
\end{lemma}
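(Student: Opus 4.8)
The plan is to test the weak formulation of \eqref{eq:thin_film_eps} with the constant function $\varphi \equiv 1$ on $\Omega$, so that every spatial-derivative term on the right-hand side collapses. Formally, integrating \eqref{eq:thin_film_eps} over $\Omega$ gives
\[
\frac{d}{dt}\int_{\Omega} y(t,\cdot)\dd x = -\int_{\Omega}\big([\lambda|y|^\beta+\ve]y_{xxx}\big)_x\dd x + \int_{\Omega} u_x\dd x = -\big[(\lambda|y|^\beta+\ve)y_{xxx}\big]_a^b + \big[u\big]_a^b,
\]
and both boundary terms vanish: the first because $y_{xxx}=0$ at $x=a,b$ by the boundary conditions imposed in the problem, and the second because $u\in L^2(H_0^1)$ means $u(t,\cdot)\in H_0^1(\Omega)$, hence $u=0$ at $x=a,b$ for a.e.\ $t$. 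Therefore $t\mapsto \int_\Omega y(t,\cdot)\dd x$ is constant, and evaluating at $t=0$ with $y(0,\cdot)=y_0$ yields \eqref{eq:thin_film_massenerhaltung}.

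To make this rigorous I would work directly from whatever notion of weak solution is in force (the regularity $y\in L^2(H^4)\cap H^1(L^2)$ suggested in the introduction makes all the terms above genuine $L^2$ functions, so the formal computation is essentially already legitimate). Concretely: for a.e.\ $t$, pair the equation $y_t = -\big([\lambda|y|^\beta+\ve]y_{xxx}\big)_x + u_x$ against $1\in H^1(\Omega)$; integrate by parts once on each of the two right-hand terms to move the outer $x$-derivative onto the test function, which kills them since $\partial_x 1 = 0$, leaving only the boundary contributions, which vanish as above. This shows $\int_\Omega y_t(t,\cdot)\dd x = 0$ for a.e.\ $t$. Since $y\in H^1(0,T;L^2(\Omega))$, the map $t\mapsto \int_\Omega y(t,\cdot)\dd x$ is absolutely continuous with derivative $\int_\Omega y_t(t,\cdot)\dd x = 0$ a.e., hence constant on $[0,T]$; comparison with the initial value gives the claim. (If the weak formulation is instead stated in a time-integrated form, one tests with $1\otimes\psi(t)$ for arbitrary $\psi\in C_c^\infty(0,T)$ and uses the fundamental lemma of the calculus of variations to reach the same conclusion.)

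There is no real obstacle here; the only point requiring a modicum of care is ensuring the two boundary terms are well-defined and genuinely zero. For the flux term one needs $y_{xxx}(t,\cdot)$ to have a well-defined trace at $a,b$ — guaranteed by $y(t,\cdot)\in H^4(\Omega)$ — together with the boundary condition $y_{xxx}=0$ there; for the control term one simply invokes $u(t,\cdot)\in H_0^1(\Omega)\hookrightarrow C(\overline\Omega)$ with vanishing trace. Everything else is a one-line integration by parts, so the lemma follows immediately.
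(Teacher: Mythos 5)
Your proposal is correct and follows exactly the paper's argument: integrate the equation over $\Omega$, apply the divergence theorem, and use the boundary conditions $y_{xxx}=0$ at $a,b$ together with $u\in L^2(H_0^1)$ to kill the boundary terms. The paper's proof is a one-liner to this effect; your version simply supplies the additional rigor (testing with $\varphi\equiv 1$, absolute continuity of the mass in time) that the authors leave implicit.
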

\begin{proof}
Integrate \eqref{eq:thin_film} over $\Omega$ and use the divergence theorem together with the boundary conditions for $u$ to prove \eqref{eq:thin_film_massenerhaltung}.
\end{proof}
In the following, let $E[y]  = \frac{1}{2} \int_{\Omega} \vert y_x\vert^2\, {\rm d}x$.
\begin{lemma} \label{lemma:thin_film_hoelder_space}
Let $\lambda>0$, $\ve\geq 0$,  $u\in L^2({ H^1_0})$, and $y:\Omega_T\to \R$ be a {weak} solution of \eqref{eq:thin_film_eps} with $y\geq C_0 > 0$ in $\Omega_T$. Then there exists a constant $C = C( T,C_0, E[y_0],\Vert u\Vert_{L^2(L^2)}) >0$ such that 
\begin{align}
\sup_{t \geq 0} E[y(t)]  + (\lambda C_0^\beta+\ve)\Vert y_{xxx}\Vert_{L^2(L^2)}^2 \leq C. \label{eq:thin_film_eps_apriori_weak}
\end{align}
In particular, $y$ is Hölder continuous in space, i.e., there exists a constant $H_{\text{\rm space}}>0$ such that
\[ \vert y(t,x_1)-y(t,x_2)\vert \leq H_{\text{\rm space}}\vert x_1-x_2\vert^{\frac12} \quad \forall \,0\leq t\leq T, \; x_1,x_2\in\Omega. \]
\end{lemma}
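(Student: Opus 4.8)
The plan is to establish the energy estimate \eqref{eq:thin_film_eps_apriori_weak} by the standard energy method: test the weak formulation of \eqref{eq:thin_film_eps} with $-y_{xx}$, which formally gives
\[
\frac{\dd}{\dd t} E[y(t)] = \int_\Omega y_t\,(-y_{xx})\dd x = -\int_\Omega \bigl([\lambda|y|^\beta+\ve]y_{xxx}\bigr)_x\,(-y_{xx})\dd x + \int_\Omega u_x(-y_{xx})\dd x.
\]
After integrating by parts (using the boundary conditions $y_x = y_{xxx} = 0$ at $a,b$, which kill the boundary terms), the principal term becomes $-\int_\Omega [\lambda|y|^\beta+\ve]\,|y_{xxx}|^2\dd x$, and since $y \geq C_0$ we may bound $\lambda|y|^\beta + \ve \geq \lambda C_0^\beta + \ve > 0$ pointwise. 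The forcing term is integrated by parts once more to give $-\int_\Omega u_x\,y_{xx}\dd x = \int_\Omega u_{xx}\,y_x\dd x$ — wait, since $u \in L^2(H^1_0)$ only, I should instead keep it as $-\int_\Omega u_x y_{xx}\dd x$ and absorb differently; more robustly, write $-\int_\Omega u_x y_{xx}\dd x = \int_\Omega u_{x} \cdot(-y_{xx})\dd x$ and use Young's inequality $|\int_\Omega u_x y_{xx}|$ is not directly controllable by $E[y]$ and $\|y_{xxx}\|$. The cleaner route is to integrate by parts so that the forcing pairs against $y_{xxx}$: $\int_\Omega u_x(-y_{xx})\dd x = \int_\Omega u\, y_{xxx}\dd x$ after one integration by parts (boundary terms vanish since $u \in H^1_0$), then estimate by Young's inequality $\int_\Omega u\,y_{xxx}\dd x \leq \frac{\lambda C_0^\beta+\ve}{2}\|y_{xxx}\|^2 + \frac{1}{2(\lambda C_0^\beta+\ve)}\|u\|^2$. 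This yields $\frac{\dd}{\dd t}E[y(t)] + \frac{\lambda C_0^\beta+\ve}{2}\|y_{xxx}(t)\|^2 \leq \frac{1}{2(\lambda C_0^\beta+\ve)}\|u(t)\|^2$, and integrating in time over $[0,t]$ gives \eqref{eq:thin_film_eps_apriori_weak}, with the constant depending on $T$, $C_0$, $E[y_0]$ and $\|u\|_{L^2(L^2)}$ as claimed. One subtlety: the constant $\frac{1}{\lambda C_0^\beta + \ve}$ blows up as $C_0, \ve \to 0$, but since $C_0 > 0$ is fixed this is harmless; I should also note that the Poincaré-type control is not needed because the mass is conserved (Lemma~\ref{lemma:thin_film_equation_mass_conservation}) but in fact $\sup_t E[y(t)]$ already follows directly from the differential inequality without invoking mass conservation.

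For the rigor of the testing step — which is the main obstacle — one cannot in general test a weak solution with $-y_{xx}$ directly because $y_{xx}$ may not have enough regularity to be an admissible test function, and $E[y(t)]$ need not be absolutely continuous a priori. The standard remedy, following \cite{bernis_friedman}, is to carry out the estimate on the regularized/approximating sequence used to construct the weak solution (or equivalently to use a Galerkin approximation), where all manipulations are legitimate, obtain the bound uniformly, and then pass to the limit using weak lower semicontinuity of $E[\cdot]$ and of the $L^2(L^2)$-norm of third derivatives. Alternatively, if the notion of weak solution here already carries enough regularity (the problem statement mentions states in $L^2(H^4)\cap H^1(L^2)$, in which case $y_{xx} \in L^2(H^2)$ and $y_t \in L^2(L^2)$, so the pairing $\langle y_t, -y_{xx}\rangle$ makes sense and $t \mapsto E[y(t)]$ is absolutely continuous by a standard Lions–Magenes-type lemma), then the computation above is directly justified. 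I would state explicitly which regularity of "weak solution" is in force and proceed accordingly, mirroring the approximation argument of \cite{bernis_friedman} if the weaker notion is used.

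Finally, the Hölder continuity in space is an immediate consequence: from $\sup_{t} E[y(t)] \leq C$ we have $\|y_x(t,\cdot)\|_{L^2(\Omega)} \leq \sqrt{2C}$ for all $t$, and then for $x_1, x_2 \in \Omega = (a,b)$,
\[
|y(t,x_1) - y(t,x_2)| = \Bigl|\int_{x_1}^{x_2} y_x(t,s)\dd s\Bigr| \leq \|y_x(t,\cdot)\|_{L^2(\Omega)}\,|x_1 - x_2|^{1/2} \leq \sqrt{2C}\,|x_1-x_2|^{1/2}
\]
by the Cauchy–Schwarz inequality, so the claim holds with $H_{\mathrm{space}} := \sqrt{2C}$. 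I expect the energy estimate itself to be routine once the admissibility of the test function is settled; that justification (and tracking the precise dependence of $C$ on the listed quantities, in particular making sure no hidden dependence on $\ve$ or on $\|u\|_{L^2(H^1_0)}$ rather than $\|u\|_{L^2(L^2)}$ creeps in) is the only delicate point.
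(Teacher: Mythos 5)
Your proposal is correct and follows essentially the same route as the paper: test with $-y_{xx}$, integrate the forcing term by parts to $\int_\Omega u\,y_{xxx}\dd x$ (using $u\in H^1_0$), absorb via Young's inequality into the coercive term bounded below by $(\lambda C_0^\beta+\ve)\Vert y_{xxx}\Vert^2$, integrate in time, and obtain the spatial H\"older bound from the one-dimensional embedding $H^1\hookrightarrow \Con^{0,\frac12}$. Your added remarks on justifying the test function and on Gronwall being dispensable are fine but do not change the argument.
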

\begin{proof}
We multiply \eqref{eq:thin_film_eps} with $-y_{xx}$, integrate over $\Omega$, and arrive for almost all $t\in [0,T]$ at
\begin{align}
\frac12\frac{d}{dt}\Vert y_x\Vert^2 + \int_{\Omega} (\lambda\vert y\vert^\beta+\ve)y_{xxx}^2 \dd x = - \int_{\Omega} u_x y_{xx} \dd x =: I. \label{eq:thin_film_hoelder_space_nach_testen}
\end{align}
For $\sigma>0$, the term $I$ can be estimated by
\[ I = \int_{\Omega} u y_{xxx} \dd x \leq \sigma\Vert y_{xxx}\Vert^2 + C(\sigma)\Vert u\Vert^2. \]
Using that $\lambda\vert y\vert^\beta+\ve\geq \lambda C_0^\beta+\ve$, choosing $\sigma$ sufficiently small, and finally using Gronwall's inequality validates the lemma. The Hölder continuity follows by one-dimensional Sobolev embeddings.
\end{proof}

\begin{lemma} \label{lemma:thin_film_hoelder_time}
Let $\lambda>0$, $\ve\geq 0$,  $u\in L^2(L^2)$, and $y:\Omega_T\to \R$ be a weak solution of \eqref{eq:thin_film_eps} with $y\geq C_0>0$ in $\Omega_T$. Then there exists a constant $H_{\text{\rm time}} = H_{\text{\rm time}}(T,C_0, E[y_0],\Vert u\Vert_{L^2(L^2)})>0$ such that
\[ \vert y(t_2,x)-y(t_1,x)\vert \leq H_{\text{\rm time}}\vert t_2-t_1\vert^{\frac18} \quad \forall\, 0\leq t_1,t_2\leq T, \; x\in\Omega. \]
\end{lemma}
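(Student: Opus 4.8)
The plan is to establish H\"older continuity in time by a duality/interpolation argument, following the standard strategy for fourth-order degenerate parabolic equations as in \cite{bernis_friedman}. Fix $x\in\Omega$ and $t_1<t_2$ with $h:=t_2-t_1$. If $h$ is larger than some fixed threshold, the bound is trivial from the uniform bound $\sup_t E[y(t)]\leq C$ of Lemma~\ref{lemma:thin_film_hoelder_space} together with the spatial H\"older bound (which gives a uniform $L^\infty$ bound on $y$ since mass is conserved by Lemma~\ref{lemma:thin_film_equation_mass_conservation}); so we may assume $h$ small. The idea is to test the increment $y(t_2,\cdot)-y(t_1,\cdot)$ against a suitable smooth bump function concentrated near $x$ on a spatial scale $\rho$ to be optimized against $h$.

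Concretely, first I would integrate \eqref{eq:thin_film_eps} in time from $t_1$ to $t_2$ and pair with a test function $\varphi\in C^\infty(\Omega)$ satisfying $\varphi_x=\varphi_{xxx}=0$ at the endpoints, obtaining
\begin{align}
\int_\Omega \big(y(t_2)-y(t_1)\big)\varphi\dd x = \int_{t_1}^{t_2}\int_\Omega \big[\lambda|y|^\beta+\ve\big]y_{xxx}\varphi_x\dd x\dd t - \int_{t_1}^{t_2}\int_\Omega u\varphi_x\dd x\dd t. \nonumber
\end{align}
For the right-hand side I would use $\lambda|y|^\beta+\ve\leq C$ (uniform $L^\infty$ bound on $y$), Cauchy--Schwarz in space and time, and the $L^2(L^2)$ bounds $\|y_{xxx}\|\leq C$ and $\|u\|\leq C$ from Lemma~\ref{lemma:thin_film_hoelder_space}, yielding a bound of the form $C\,h^{1/2}\big(\|\varphi_x\|_{L^2(\Omega)}\big)$. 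Next I would choose $\varphi=\varphi_\rho$ a standard mollifier-type bump of width $\rho$ centered at $x$, normalized so that $\int_\Omega\varphi_\rho\dd x=1$; then $\|\varphi_{\rho,x}\|_{L^2(\Omega)}\leq C\rho^{-3/2}$, so the right-hand side is $\leq C\,h^{1/2}\rho^{-3/2}$. For the left-hand side I would write $\int_\Omega(y(t_2)-y(t_1))\varphi_\rho\dd x = (y(t_2,x)-y(t_1,x)) + \int_\Omega\big[(y(t_i,\cdot)-y(t_i,x))\big]\varphi_\rho\dd x$ and control the error terms using the spatial H\"older estimate of Lemma~\ref{lemma:thin_film_hoelder_space}: $|\int_\Omega(y(t_i,z)-y(t_i,x))\varphi_\rho(z)\dd z|\leq H_{\text{space}}\,\rho^{1/2}$. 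Combining, $|y(t_2,x)-y(t_1,x)|\leq C\,h^{1/2}\rho^{-3/2} + C\,\rho^{1/2}$, and optimizing by choosing $\rho\sim h^{1/4}$ balances the two terms and gives the exponent $h^{1/8}$, as claimed; one must only check $\rho\lesssim\diam\Omega$, which holds for $h$ small and is where the reduction to small $h$ is used.

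The main obstacle I anticipate is making the cutoff/test-function argument clean near the boundary: the test function must respect the Neumann-type conditions $\varphi_x=\varphi_{xxx}=0$ at $a,b$ so that the integration by parts producing $y_{xxx}\varphi_x$ (rather than boundary terms involving $y_{xxx}$ and $y_{xx}$) is justified, and when $x$ is within distance $\rho$ of $\partial\Omega$ one cannot simply translate a fixed bump. A standard fix is to use a bump that is even reflection-symmetric about the nearest boundary point (or to note that the estimate is needed only for an interior dense set of $x$ and then extended by the already-established spatial continuity); either way the scaling $\|\varphi_{\rho,x}\|_{L^2}\sim\rho^{-3/2}$ is preserved. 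A secondary technical point is the precise regularity in which \eqref{eq:thin_film_eps} holds so that the time integration and testing are legitimate — but since we are given $y$ is a weak solution with $\|y_{xxx}\|_{L^2(L^2)}<\infty$, the identity above is exactly the weak formulation tested against a time-independent $\varphi$, so no extra regularity is needed.
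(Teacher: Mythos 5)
Your argument is correct and is essentially the paper's proof run forwards rather than by contradiction: the paper assumes $\vert y(t_2,x_0)-y(t_1,x_0)\vert > M h^{1/8}$ and tests over $[t_1,t_2]$ with a spatial bump of width $\sim M^2 h^{1/4}$ (your $\rho\sim h^{1/4}$), so that the spatial-H\"older error $\sim\rho^{1/2}$ and the flux/control terms $\sim h^{1/2}\rho^{-3/2}$ --- bounded exactly as you do, via Cauchy--Schwarz and the $L^2(L^2)$ bounds on $y_{xxx}$ and $u$ from Lemma~\ref{lemma:thin_film_hoelder_space} --- balance at $h^{1/8}$, yielding $M\leq C^{1/4}$. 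Your direct optimization over $\rho$ with a normalized bump replaces the paper's bookkeeping with the constant $M$ and its mollified-in-time test function $\theta_{\delta}$, but every underlying estimate coincides.
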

\begin{proof}
The proof uses arguments similar (for $u = 0$) to those given in \cite[Lemma 2.1]{bernis_friedman}.

{\em Step 1:} Assume the statement is not correct. Then for every $M>0$ there exist $x_0\in\Omega$ and $0\leq t_1,t_2\leq T$ such that
\begin{align} 
\vert y(t_2,x_0)-y(t_1,x_0)\vert > M \vert t_2-t_1\vert^{\kappa} \label{eq:thin_film_hoelder_time_contradiction_1}
\end{align}
for $\kappa=\frac18$. Without restriction let us assume that $t_1<t_2$ and $y(t_2)>y(t_1)$. Then \eqref{eq:thin_film_hoelder_time_contradiction_1} reads as
\begin{align}
y(t_2,x_0)-y(t_1,x_0) > M (t_2-t_1)^{\kappa}. \label{eq:thin_film_hoelder_time_contradiction_2}
\end{align}
In the proof, we will show that $M$ can be uniformly bounded with respect to $x_0,t_1$ and $t_2$, which contradicts \eqref{eq:thin_film_hoelder_time_contradiction_2}.

We construct an appropriate test function for the equation \eqref{eq:thin_film_eps}. Let
\[ \xi(x) := \xi_0\left( \frac{x-x_0}{\frac{M^2}{16H_{\text{space}}^2}(t_2-t_1)^{2\kappa}}\right), \]
where $M$ is from \eqref{eq:thin_film_hoelder_time_contradiction_2}, and $H_{\text{space}}$ from Lemma~\ref{lemma:thin_film_hoelder_space}. The function $\xi_0\in \Con_0^{\infty}$ has the properties $\xi_0(x)=\xi_0(-x)$, $\xi_0(x) := 1$ for $0\leq x<\frac1{2L}$ for some $L>0$ ($L$ will be chosen later and will only depend on $H_{\text{space}}>0$ from Lemma~\ref{lemma:thin_film_hoelder_space} and on $\Omega$), $\xi_0(x) := 0$ for $x\geq 1$ and $\xi_0'(x)\leq 0$ for $x\geq 0$. In particular, we have
\[ \xi(x) = \begin{cases} 0, & \vert x-x_0\vert\geq \frac{M^2}{16H_{\text{space}}^2}(t_2-t_1)^{2\kappa}, \\ 1, & \vert x-x_0\vert\leq \frac1{2L} \frac{M^2}{16H_{\text{space}}^2}(t_2-t_1)^{2\kappa}. \end{cases} \]
We define the function $\theta_{\delta}$ by
\[ \theta_{\delta}(t):= \int_{-\infty}^t \theta_{\delta}'(s) \dd s, \]
where
\[ \theta_{\delta}'(t) = \begin{cases} \frac1{\delta}, & \vert t-t_2\vert<\delta, \\ -\frac1{\delta}, & \vert t-t_1\vert<\delta, \\ 0, & \text{ else} \end{cases} \]
for $0<\delta<\min\lbrace\frac12(t_2-t_1),t_1,T-t_2\rbrace$ small enough.

We consider the function $\phi(t,x):=\xi(x)\theta_{\delta}(t)$, multiply \eqref{eq:thin_film_eps} with $\phi$, integrate over $\Omega_T$ and get
\begin{align}
\int_0^T y\phi_t \dd x \dd t = - \int_0^T \int_{\Omega} (\lambda\vert y\vert^\beta+\ve)y_{xxx} \phi_x \dd x \dd t + \int_0^T \int_{\Omega} u\phi_x \dd x \dd t. \label{eq:thin_film_hoelder_time_test_equation}
\end{align}

{\em Step 2:}
We derive a lower bound for the left-hand side of \eqref{eq:thin_film_hoelder_time_test_equation}. By the construction of $\theta_{\delta}$, its time derivative approximates like a Dirac function evaluated at $t_1$ and $t_2$, respectively. More precisely, we have for $\delta\to 0$
\begin{align}
\int_0^T \int_{\Omega} y(t,x)\xi(x)\theta_{\delta}'(t) \dd x \dd t \to \int_{\Omega} \xi(x) \big[ y(t_2,x)-y(t_1,x)\big] \dd x. \label{eq:thin_film_hoelder_time_neuer_verweis}
\end{align}
We consider points $x$ such that
\begin{align} 
\vert x-x_0\vert\leq\frac{M^2}{16H_{\text{space}}^2}(t_2-t_1)^{2\kappa} \label{eq:thin_film_hoelder_time_estimate_x_x_0}
\end{align}
since the corresponding integral in \eqref{eq:thin_film_hoelder_time_test_equation} outside this ball vanishes. For such $x$, there holds by \eqref{eq:thin_film_hoelder_time_contradiction_2} and Lemma~\ref{lemma:thin_film_hoelder_space}
\begin{align*}
y(t_2,x)-y(t_1,x) &= \big[y(t_2,x)-y(t_2,x_0)\big] + \big[ y(t_2,x_0)-y(t_1,x_0)\big] + \big[y(t_1,x_0)-y(t_1,x)\big] \\
&\geq -2H_{\text{space}}\vert x-x_0\vert^{\frac12} + M(t_2-t_1)^{\kappa} \geq \frac{M}2(t_2-t_1)^{\kappa},
\end{align*}
where we also used \eqref{eq:thin_film_hoelder_time_estimate_x_x_0}. For $L=L(\Omega,H_{\text{space}})>0$ appropriate, we have $\lbrace \xi=1\rbrace\subset \Omega$. We may estimate the term in \eqref{eq:thin_film_hoelder_time_neuer_verweis} from below as follows,
\begin{align} 
\int_{\Omega} \xi(x)\big[ y(t_2,x)-y(t_1,x)\big] \dd x \geq \frac{M}2(t_2-t_1)^{\kappa} \frac1{2L}\frac{M^2}{16H_{\text{space}}^2}(t_2-t_1)^{2\kappa} = CM^3(t_2-t_1)^{3\kappa}. \label{eq:thin_film_hoelder_time_lhs_lower_bound}
\end{align}
{\em Step 3:} We derive an upper bound for the first term on right-hand side of \eqref{eq:thin_film_hoelder_time_test_equation}.
\begin{multline*}
\int_0^T \int_{\Omega} (\lambda\vert y\vert^\beta+\ve)y_{xxx} \phi_x \dd x \dd t \\
\begin{aligned}
&\leq \Vert \lambda\vert y\vert^\beta+\ve \Vert_{L^{\infty}(\Omega_T)} \Vert y_{xxx}\Vert_{L^2(L^2)} \left( \int_0^T \int_{\Omega} [\xi'(x)]^2[\theta_{\delta}(t)]^2 \dd x \dd t \right)^{\frac12} \\
&\leq \Vert \lambda\vert y\vert^\beta+\ve \Vert_{L^{\infty}(\Omega_T)} \Vert y_{xxx}\Vert_{L^2(L^2)} \overbrace{\left( \int_{\Omega} [\xi'(x)]^2\dd x \right)^{\frac12}} \underbrace{\left( \int_0^T [\theta_{\delta}(t)]^2 \dd t \right)^{\frac12}} \\
&\leq C(H_{\text{space}}) \overbrace{\frac1{\frac{M^2}{16H_{\text{space}}^2}(t_2-t_1)^{2\kappa}}\Vert \xi_0'\Vert_{L^{\infty}(\Omega)} \frac{M}{4H_{\text{space}}} (t_2-t_1)^{\kappa}}\underbrace{2(t_2-t_1+2\delta)^{\frac12}},
\end{aligned}
\end{multline*}
where we used that the first two norms are uniformly bounded via Lemma~\ref{lemma:thin_film_hoelder_space} by $C(H_{\text{space}})$. The factor $\frac{M}{4H_{\text{space}}}(t_2-t_1)^{\kappa}$ is the integral of $1$ over $\supp \xi$, while $(t_2-t_1+2\delta)^{\frac12}$ is the Lebesgue measure of the support of $\theta_{\delta}$, where we use that $\theta_{\delta}$ is uniformly bounded by $2$ (We highlight the affiliation of each term in the last estimate). Note that the constant $C$ depends on $H_{\text{space}}$ from Lemma~\ref{lemma:thin_film_hoelder_space} (i.e., on $T,C_0, E[y_0]$, and $\Vert u\Vert_{L^2(L^2)}$), but it does not depend on $\ve$, $M$ or $\delta$.

{\em Step 4:} We estimate the second term in \eqref{eq:thin_film_hoelder_time_test_equation},
\[ \int_0^T \int_{\Omega} u\phi_x \dd x \dd t \leq \Vert u\Vert\Vert \phi_x\Vert \leq C(H_{\text{space}}) \frac1M(t_2-t_1)^{-\kappa}(t_2-t_1+2\delta)^{\frac12}. \]

{\em Step 5:} For $\delta\to 0$, we get at the end
\[ M^3(t_2-t_1)^{3\kappa} \leq C\frac1M(t_2-t_1)^{\frac12-\kappa}, \]
where the constant $C$ is independent of $x_0,t_1,t_2$ and $M$. This leads to $M\leq \sqrt[4]{C}$, which contradicts \eqref{eq:thin_film_hoelder_time_contradiction_2}, and the lemma follows. 
\end{proof}

{ \begin{lemma} \label{lemma:thin_film_apriori_h4}
Let $\lambda>0$, $\ve\geq 0$,  $u\in L^2(H_0^1)$, and $y:\Omega_T\to \R$ be a weak solution of \eqref{eq:thin_film_eps} with $y\geq C_0$ in $\Omega_T$. There exists $C = C(C_0, \beta, T, \Vert u_x\Vert_{L^2(L^2)}) > 0$ such that
\begin{align}
\Vert y_{xx}\Vert_{L^{\infty}(L^2)}^2 + (\lambda C_0^\beta+\ve)\Vert y_{xxxx}\Vert_{L^2(L^2)}^2 \leq 
C(\Vert u_x\Vert_{L^2(L^2)}^2 + 1)\, . \label{eq:thin_film_eps_apriori_h4}
\end{align}
\end{lemma}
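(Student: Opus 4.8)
The plan is to obtain the estimate \eqref{eq:thin_film_eps_apriori_h4} by testing \eqref{eq:thin_film_eps} against $y_{xxxx}$ and carefully tracking the degenerate coefficient $\lambda|y|^\beta + \ve$. Formally, multiplying \eqref{eq:thin_film_eps} by $y_{xxxx}$ and integrating over $\Omega$, the left-hand side gives $\int_\Omega y_t y_{xxxx}\dd x = \frac{d}{dt}\int_\Omega \frac12 y_{xx}^2\dd x$ after two integrations by parts (the boundary terms vanish because $y_x = y_{xxx} = 0$ on $\partial\Omega$, and one has to check the compatibility of $y_{tx}$ with the boundary, which for weak solutions in $L^2(H^4)\cap H^1(L^2)$ is handled by a density/Galerkin argument as in \cite{bernis_friedman}). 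The first term on the right-hand side produces $-\int_\Omega \bigl([\lambda|y|^\beta+\ve] y_{xxx}\bigr)_x y_{xxxx}\dd x = \int_\Omega [\lambda|y|^\beta+\ve] y_{xxx} y_{xxxxx}\dd x$, which after another integration by parts becomes $-\int_\Omega [\lambda|y|^\beta+\ve] y_{xxxx}^2\dd x - \int_\Omega \bigl([\lambda|y|^\beta+\ve]\bigr)_x y_{xxx} y_{xxxx}\dd x$, i.e.\ it yields the good coercive term $-(\lambda|y|^\beta+\ve)\|y_{xxxx}\|^2$ (bounded below using $|y|\geq C_0$) plus a troublesome lower-order remainder. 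The control term gives $\int_\Omega u_x y_{xxxx}\dd x \leq \sigma\|y_{xxxx}\|^2 + C(\sigma)\|u_x\|^2$.

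The main work is to absorb the remainder $\int_\Omega \bigl(\lambda|y|^\beta\bigr)_x y_{xxx} y_{xxxx}\dd x = \int_\Omega \lambda\beta |y|^{\beta-2} y\, y_x\, y_{xxx}\, y_{xxxx}\dd x$ into the coercive term. Here I would exploit the regularity already available: by Lemma~\ref{lemma:thin_film_hoelder_space}, $y$ is bounded in $L^\infty(H^1)$, hence $|y|^{\beta-1}|y_x| \leq \|y\|_{L^\infty(\Omega_T)}^{\beta-1}|y_x|$ with $\|y\|_{L^\infty(\Omega_T)}$ already controlled (from $E[y]$ bounded plus mass conservation and a one-dimensional embedding), so this factor lies in $L^\infty(L^2)$. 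Moreover $\|y_{xxx}\|_{L^2(L^2)}$ is bounded by \eqref{eq:thin_film_eps_apriori_weak}. The dangerous pairing $\|y_x\|_{L^\infty}$ or $\|y_{xxx}\|_{L^\infty}$ would be too strong, so instead I would write the remainder as $\int_\Omega (\text{bounded})\cdot y_x y_{xxx} y_{xxxx}\dd x$ and use the one-dimensional Gagliardo--Nirenberg / interpolation inequalities $\|y_{xxx}\|_{L^\infty} \le C\|y_{xxx}\|^{1/2}\|y_{xxxx}\|^{1/2} + C\|y_{xxx}\|$ together with $\|y_x\|_{L^\infty}\le C\|y_{xx}\|^{1/2}\|y_x\|^{1/2} + C\|y_x\|$, so that after Young's inequality the highest power of $\|y_{xxxx}\|$ appears with an arbitrarily small prefactor $\sigma$ (to be absorbed on the left) while the remaining factors are powers of the already-bounded quantities $\|y_{xxx}\|_{L^2(L^2)}$, $\sup_t E[y(t)]$, together with $\|y_{xx}\|_{L^2}$ raised to a power $\le 2$, which after time-integration is handled by Gronwall. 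The $\ve$-uniformity is preserved throughout since the degenerate coefficient only enters through the lower bound $\lambda|y|^\beta+\ve\ge \lambda C_0^\beta$ in the coercive term; all other estimates are $\ve$-independent (noting $\|\lambda|y|^\beta+\ve\|_{L^\infty}$ does depend on $\ve$ only through $\ve \le$ a fixed bound, or one simply writes $\lambda|y|^\beta + \ve$ and never needs its $L^\infty$ norm on the remainder after the integration by parts trick above).

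Collecting terms, one arrives at a differential inequality of the form $\frac{d}{dt}\|y_{xx}(t)\|^2 + \lambda C_0^\beta\|y_{xxxx}(t)\|^2 \le C\bigl(1 + \|u_x(t)\|^2 + \|y_{xx}(t)\|^2\,g(t)\bigr)$ with $g \in L^1(0,T)$ built from the a priori quantities of Lemmas~\ref{lemma:thin_film_hoelder_space}--\ref{lemma:thin_film_hoelder_time}, and with $\|y_{xx}(0)\| = \|y_{0,xx}\|$ finite since $y_0 \in H^2(\Omega)$. Gronwall's inequality then yields the $L^\infty(L^2)$ bound on $y_{xx}$, and integrating the inequality in time yields the $L^2(L^2)$ bound on $(\lambda C_0^\beta + \ve)^{1/2} y_{xxxx}$; the constant depends only on $C_0, \beta, T, \|u_x\|_{L^2(L^2)}$ and the data, as claimed. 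I expect the main obstacle to be the rigorous justification of the formal testing against $y_{xxxx}$ together with the handling of boundary terms for weak solutions — this should be done via a Galerkin approximation (or by first proving the estimate for the already-regularized smooth solutions used in the construction) and passing to the limit, exactly as in \cite{bernis_friedman}; the nonlinear remainder estimate is technically the most delicate but is a routine, if lengthy, application of one-dimensional interpolation once the right quantities to bound are identified.
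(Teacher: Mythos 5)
Your proposal is correct and follows essentially the same route as the paper: rewrite the divergence-form operator so that testing with $y_{xxxx}$ yields the coercive term $\int[\lambda|y|^\beta+\ve]y_{xxxx}^2\,\mathrm{d}x\geq(\lambda C_0^\beta+\ve)\|y_{xxxx}\|^2$ plus the remainder $\int\beta\lambda y^{\beta-1}y_xy_{xxx}y_{xxxx}\,\mathrm{d}x$, absorb the small-$\sigma$ pieces via Young, control the remainder through the one-dimensional embedding $H^1\subset L^\infty$ and the $L^2(L^2)$ bound on $y_{xxx}$ from Lemma~\ref{lemma:thin_film_hoelder_space}, and conclude with Gronwall. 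The paper obtains the remainder directly from the product rule rather than your double integration by parts (thereby avoiding the appearance of $y_{xxxxx}$), but this is a cosmetic difference.
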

}
\begin{proof}
We rewrite the main part of the equation \eqref{eq:thin_film_eps} in non-divergence form,
\begin{equation}\label{rtm1}\big( [\lambda\vert y\vert^\beta +\ve]y_{xxx}\big)_x = \beta \lambda y^{\beta-1} y_x y_{xxx} + [\lambda\vert y\vert^\beta+\ve]y_{xxxx}. 
\end{equation}
Multiply \eqref{rtm1}  with $y_{xxxx}$, integrate over $\Omega$ to arrive for $\sigma>0$ at
\begin{align}
\frac12\frac{d}{dt} \Vert y_{xx}\Vert^2 + \underbrace{\int_{\Omega} [\lambda\vert y\vert^\beta+\ve]y_{xxxx}^2 \dd x }_{=:I_1} \leq{}& \underbrace{-\int_{\Omega} \beta \lambda y^{\beta -1}y_x y_{xxx}y_{xxxx} \dd x}_{=:I_2} \label{eq:thin_film_eps_apriori_h4_zwischenschritt} \\
&+ \sigma\Vert y_{xxxx}\Vert^2 + C(\sigma) \Vert u_x\Vert^2. \nonumber
\end{align}
{ We calculate for $\sigma>0$, using $H^1(\Omega) \subset L^{\infty}(\Omega)$,
\begin{align*}
I_1 &\geq (\lambda C_0^\beta +\ve)\Vert y_{xxxx}\Vert^2_{L^2}, \\
I_2 &\leq C(\sigma) \Vert y\Vert_{L^{\infty}}^{2(\beta-1)} \bigl(\Vert y_x\Vert^2_{L^{2}} + \Vert y_{xx}\Vert^2_{L^2}\bigr) \Vert y_{xxx}\Vert^2_{L^2} + \sigma \Vert y_{xxxx}\Vert^2_{L^2}\, .
\end{align*}

We now absorb the terms on the right-hand side which are lead by $\sigma>0$ in \eqref{eq:thin_film_eps_apriori_h4_zwischenschritt} into the lower bound of $I_1$. Those lead by $C(\sigma)$ are integrable in time by \eqref{eq:thin_film_eps_apriori_weak}, and we deduce \eqref{eq:thin_film_eps_apriori_h4} with the help of Gronwall's lemma.
}
\end{proof}

\subsection{Existence}

For every $\ve>0$, the regularized equation \eqref{eq:thin_film_eps} has a unique weak solution.

\begin{lemma} \label{lemma:thin_film_eps_existence}
Let $\lambda,\ve>0$, and $u\in L^2(H^1_0)$. Then \eqref{eq:thin_film_eps} has a {unique} weak solution 
{$y\in L^2(H^4)\cap H^1(L^2)$}.
\end{lemma}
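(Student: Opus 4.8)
The plan is to establish existence by a Galerkin-type argument (or equivalently a fixed-point/Leray--Schauder argument on a linearised problem), using the a priori bounds from Lemmas~\ref{lemma:thin_film_equation_mass_conservation}--\ref{lemma:thin_film_apriori_h4} to pass to the limit, and then to prove uniqueness separately by an energy estimate on the difference of two solutions. The crucial point that makes everything work for $\ve>0$ is that the coefficient $\lambda|y|^\beta+\ve\geq\ve>0$ is uniformly (in $y$) bounded below, so the principal part is genuinely fourth-order parabolic (non-degenerate), and the state constraint $y\geq C_0$ needed in the earlier lemmas will itself be recovered in the limit by invoking the positivity argument (this uses that for $\ve>0$ one can run, on the regularised equation, an approximation scheme which preserves positivity, cf.\ the method of \cite{bernis_friedman}).

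Concretely, I would first replace $|y|^\beta$ by a truncated, globally Lipschitz coefficient $a_\delta(y):=\lambda\min\{|y|,\delta^{-1}\}^\beta+\ve$ and/or add a lower-order regularising term, so that the equation $y_t=-(a_\delta(y)y_{xxx})_x+u_x$ is a semilinear fourth-order parabolic equation with bounded measurable, uniformly elliptic coefficient. For such an equation, existence and uniqueness of a weak solution $y\in L^2(H^4)\cap H^1(L^2)$ with the stated boundary conditions $y_x=y_{xxx}=0$ is standard: project onto a Galerkin basis adapted to the boundary conditions (e.g.\ eigenfunctions of $\partial_x^4$ with these Neumann-type conditions), solve the resulting ODE system, and obtain uniform bounds. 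The key uniform estimates are exactly those derived above: testing with $-y_{xx}$ gives the energy bound \eqref{eq:thin_film_eps_apriori_weak} (with $\ve>0$ replacing $\lambda C_0^\beta+\ve$, so no positivity of $y$ is needed there), testing with $y_{xxxx}$ together with \eqref{rtm1} gives the $H^4$-bound of Lemma~\ref{lemma:thin_film_apriori_h4}, and the equation then bounds $y_t$ in $L^2(L^2)$. Aubin--Lions yields strong convergence of a subsequence in, say, $L^2(H^3)$ (and by the H\"older estimates of Lemmas~\ref{lemma:thin_film_hoelder_space}--\ref{lemma:thin_film_hoelder_time}, uniform convergence), which is enough to pass to the limit in the nonlinear term $a_\delta(y)y_{xxx}$.

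Next I would remove the truncation: once one knows $y\geq C_0$ (positivity), the coefficient $\lambda|y|^\beta+\ve$ is automatically bounded between $\lambda C_0^\beta+\ve$ and a constant depending on $\|y\|_{L^\infty}$, hence lies in the regime where $a_\delta=a$ for $\delta$ small; so it suffices to prove that the constructed solution satisfies $y\geq C_0$. Here I would use the entropy/positivity mechanism underlying Lemma~\ref{lemma:thin_film_g0_gleich_0_positivity} applied to the \emph{regularised} equation: for $\ve>0$ the argument is in fact easier than in \cite{bernis_friedman} because the $\ve$-term prevents degeneracy, and one obtains that a solution with $y_0\geq C_0$ stays $\geq C_0$ (possibly after first establishing nonnegativity via a standard approximation of $|y|^\beta$ by a positivity-preserving flux and then the entropy bound to upgrade $\geq 0$ to $\geq C_0$). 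This is the step I expect to be the main obstacle, since it is where the structural (entropy) information, rather than mere parabolic regularity, is needed, and the bookkeeping of which regularisation preserves positivity has to be done carefully.

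Finally, uniqueness: let $y_1,y_2$ be two weak solutions in $L^2(H^4)\cap H^1(L^2)$ with the same data, both satisfying $y_i\geq C_0$. Set $w:=y_1-y_2$, so $w_t=-\big(a(y_1)y_{1,xxx}-a(y_2)y_{2,xxx}\big)_x$ with $w(0)=0$ and homogeneous boundary conditions. Write $a(y_1)y_{1,xxx}-a(y_2)y_{2,xxx}=a(y_1)w_{xxx}+\big(a(y_1)-a(y_2)\big)y_{2,xxx}$; testing with $-w_{xx}$ (or with $w_{xxxx}$, whichever is cleaner) and using $a(y_1)\geq\lambda C_0^\beta+\ve>0$, the local Lipschitz bound $|a(y_1)-a(y_2)|\le C(C_0,\|y_i\|_{L^\infty})|w|$, the $L^\infty$-bounds on $y_i$ and on $y_{2,xxx}$ coming from Lemmas~\ref{lemma:thin_film_hoelder_space} and \ref{lemma:thin_film_apriori_h4}, together with interpolation of the intermediate derivatives of $w$ between $\|w\|$ and $\|w_{xx}\|$ (resp.\ $\|w_{xxxx}\|$) and Young's inequality to absorb the top-order term, yields $\tfrac{d}{dt}\|w_x\|^2\le C\|w_x\|^2$ (resp.\ a bound on $\tfrac{d}{dt}\|w_{xx}\|^2$); Gronwall and $w(0)=0$ then force $w\equiv 0$. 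The only mild subtlety is that the test functions $-w_{xx}$ and $w_{xxxx}$ are admissible because $w\in L^2(H^4)\cap H^1(L^2)$, which is precisely the regularity class in which the solution is sought.
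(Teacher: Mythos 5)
The paper disposes of this lemma in one line (``standard parabolic theory, since for $\ve>0$ the leading part is uniformly parabolic''), and the skeleton of your argument --- truncate the coefficient, Galerkin, a priori bounds, Aubin--Lions, then an energy estimate on the difference of two solutions for uniqueness --- is a perfectly legitimate way to make that precise. Your uniqueness step is fine.

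However, your de-truncation step is wrong, and wrong in a way that matters for this paper. You claim that to remove the truncation one must show $y\geq C_0$, and you propose to do so by running the entropy/positivity argument of Lemma~\ref{lemma:thin_film_g0_gleich_0_positivity} on the regularised equation. Two problems. First, the truncation $a_\delta(y)=\lambda\min\{|y|,\delta^{-1}\}^\beta+\ve$ cuts the coefficient off from \emph{above}; removing it requires an upper bound $\|y\|_{L^\infty(\Omega_T)}\leq\delta^{-1}$, for which a lower bound $y\geq C_0$ is irrelevant. Second, and more seriously, the positivity you invoke is simply false for general $u\in L^2(H^1_0)$: the entropy argument relies on $u=0$ (see \eqref{entropie_1}, where the right-hand side is discarded), the paper stresses in the introduction that it is not known to survive $u\neq 0$, and Figure~\ref{fig:thin_film_negative_sol_only_state_bsp} exhibits a solution of the regularised equation that becomes negative. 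Indeed, the whole point of the state constraint in Problem~\ref{prob:thin_film_opti} is that solutions of \eqref{eq:thin_film_eps} for $\ve>0$ exist and are unique \emph{without} being positive. The fix is easy and makes your ``main obstacle'' disappear: the energy estimate \eqref{eq:thin_film_hoelder_space_nach_testen} with the uniform lower bound $\lambda|y|^\beta+\ve\geq\ve$ gives $\sup_t\|y_x(t)\|\leq C$ independently of $\delta$, which together with mass conservation (Lemma~\ref{lemma:thin_film_equation_mass_conservation}) and the one-dimensional Sobolev embedding yields $\|y\|_{L^\infty(\Omega_T)}\leq C$ with $C$ independent of $\delta$; choosing $\delta^{-1}>C$ removes the truncation. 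The same remark applies to your use of Lemmas~\ref{lemma:thin_film_hoelder_space}--\ref{lemma:thin_film_apriori_h4}: as stated they assume $y\geq C_0$, but for fixed $\ve>0$ their proofs go through with $\ve$ replacing $\lambda C_0^\beta+\ve$ and no sign condition on $y$, which is what you actually need here.
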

\begin{proof}
This follows from standard parabolic theory since the leading part of the equation is uniformly parabolic.
\end{proof}

In contrast, it is not clear if there is a solution of \eqref{eq:thin_film}  for general $u$ and, even more, whether it is non-negative. The following lemma asserts non-negativity of solutions of \eqref{eq:thin_film} for $u = 0$; the proof of it was first given in~\cite[Theorems 3.1 and 4.1]{bernis_friedman}, and uses
the entropy functional $H[y] = \frac{1}{(\beta-1)(\beta-2)} \int_{\Omega} y^{2-\beta}\, {\rm d}x$. 
This result for the PDE \eqref{eq:thin_film} is relevant to later infer solvability for  Problem~\ref{prob:thin_film_opti}, hence we
here recapitulate its proof from \cite{bernis_friedman}.

 \begin{lemma} \label{lemma:thin_film_g0_gleich_0_positivity}
Let $\lambda>0$, $\beta \geq 4$, and {$E[y_0] + H[y_0] < \infty$.} There exist a constant
$\widetilde{C}_0 = \widetilde{C}_0(E[y_0], H[y_0], \beta, \Omega) >0$ and  
$u\in L^2(H_0^1)$ such that the unique weak solution
$y:\Omega_T \rightarrow {\mathbb R}$ of (\ref{eq:thin_film}) satisfies $y\geq 2C_0$ in $\overline{\Omega}_T$ for all $0 < C_0 \leq \widetilde{C}_0$.
\end{lemma}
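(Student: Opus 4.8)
The plan is to follow \cite[Theorems~3.1 and 4.1]{bernis_friedman}: take the control $u\equiv0$, which lies in $L^2(H_0^1)$, construct the solution of \eqref{eq:thin_film} as a limit of the solutions of the uniformly parabolic problems \eqref{eq:thin_film_eps} as $\ve\to0$, and extract a positive pointwise lower bound from an energy estimate combined with an entropy estimate, both uniform in $\ve$. (This is also what the lemma is needed for: it shows the admissible set of Problem~\ref{prob:thin_film_opti} is nonempty.) For $\ve>0$ let $y_\ve\in L^2(H^4)\cap H^1(L^2)$ be the unique weak solution of \eqref{eq:thin_film_eps} with $u=0$ provided by Lemma~\ref{lemma:thin_film_eps_existence}; all estimates below are for $y_\ve$ and, crucially, use no lower bound on $y_\ve$ (the coefficient $\lambda$ is fixed, and a rescaling $t\mapsto\lambda t$ reduces matters to $\lambda=1$, so its dependence is suppressed in the final constant).

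\emph{Energy bound and spatial H\"older continuity.} Testing \eqref{eq:thin_film_eps} with $-y_{\ve,xx}$ and using the boundary conditions --- i.e.\ \eqref{eq:thin_film_hoelder_space_nach_testen} with $u=0$ --- gives $\tfrac12\tfrac{d}{dt}\Vert y_{\ve,x}\Vert^2+\int_\Omega(\lambda\vert y_\ve\vert^\beta+\ve)y_{\ve,xxx}^2\dd x=0$, hence $\sup_{t}E[y_\ve(t)]\le E[y_0]$ and $\int_0^T\!\!\int_\Omega(\lambda\vert y_\ve\vert^\beta+\ve)y_{\ve,xxx}^2\dd x\dd t\le E[y_0]$, uniformly in $\ve$; with mass conservation this also bounds $\Vert y_\ve\Vert_{L^\infty(\Omega_T)}$ uniformly. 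By the one-dimensional embedding $H^1(\Omega)\hookrightarrow\Con^{0,1/2}(\overline{\Omega})$ each slice $y_\ve(t,\cdot)$ is H\"older-$\tfrac12$ in space with constant $H_{\text{space}}=H_{\text{space}}(E[y_0])$ independent of $\ve$ and of any lower bound (unlike in Lemma~\ref{lemma:thin_film_hoelder_space}). Since the flux $(\lambda\vert y_\ve\vert^\beta+\ve)y_{\ve,xxx}$ is then bounded in $L^2(L^2)$, $\Vert y_{\ve,t}\Vert_{L^2(H^{-1})}\le C$, and standard compactness (Aubin--Lions / Arzel\`a--Ascoli) yields a subsequence with $y_\ve\to y$ in $\Con(\overline{\Omega}_T)$, with $y$ inheriting the spatial H\"older bound.

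\emph{Entropy bound.} Let $G_\ve$ be the convex function with $G_\ve(1)=G_\ve'(1)=0$ and $G_\ve''(s)=(\lambda\vert s\vert^\beta+\ve)^{-1}$; it is $\Con^2$ and finite on all of $\R$ for $\ve>0$, increases pointwise to $G_0$ as $\ve\to0$, and $G_0(s)=\tfrac{1}{\lambda(\beta-1)(\beta-2)}s^{2-\beta}+(\text{affine})$ for $s>0$. Testing \eqref{eq:thin_film_eps} with $G_\ve'(y_\ve)\in H^1(\Omega)$ and integrating by parts twice --- the boundary terms vanish because $y_{\ve,xxx}=0$ and then $y_{\ve,x}=0$ at $a,b$, and the mobility $\lambda\vert y_\ve\vert^\beta+\ve$ cancels against $G_\ve''$ --- gives $\tfrac{d}{dt}\int_\Omega G_\ve(y_\ve)\dd x=-\Vert y_{\ve,xx}\Vert^2\le0$, hence $\int_\Omega G_\ve(y_\ve(t))\dd x\le\int_\Omega G_\ve(y_0)\dd x\le\int_\Omega G_0(y_0)\dd x\le C$, where $C$ depends only on $H[y_0]$, $\beta$, $\Omega$ (using $H[y_0]<\infty$ and $y_0\in H^2(\Omega)$). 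Passing to the limit along the subsequence above: for each fixed $\ve_N>0$, $\int_\Omega G_{\ve_N}(y(t))\dd x=\lim_{\ve\to0}\int_\Omega G_{\ve_N}(y_\ve(t))\dd x\le\liminf_{\ve\to0}\int_\Omega G_\ve(y_\ve(t))\dd x\le C$, and letting $\ve_N\to0$ (monotone convergence) gives $\sup_t\int_\Omega y(t,x)^{2-\beta}\dd x\le C$. In particular $y(t,\cdot)^{2-\beta}$ is integrable, which with the spatial H\"older bound already forces $y>0$ everywhere on $\overline{\Omega}_T$.

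\emph{Pointwise lower bound and conclusion.} Fix $t$, let $m:=\min_{x\in\overline{\Omega}}y(t,x)=y(t,x_0)>0$, and insert $y(t,x)\le m+H_{\text{space}}\vert x-x_0\vert^{1/2}$ into the entropy bound: writing $I\subset\Omega$ for the half-interval of length $\rho_0:=|\Omega|/2$ on whichever side of $x_0$ stays in $\Omega$, and substituting $r=\vert x-x_0\vert$ and then $r=(s/H_{\text{space}})^2$,
\[
C\ \geq\ \int_\Omega y(t,x)^{2-\beta}\dd x\ \geq\ \int_I\bigl(m+H_{\text{space}}\vert x-x_0\vert^{1/2}\bigr)^{2-\beta}\dd x\ =\ \frac{2}{H_{\text{space}}^2}\int_0^{H_{\text{space}}\sqrt{\rho_0}}(m+s)^{2-\beta}\,s\dd s .
\]
The last integral tends to $+\infty$ as $m\to0^+$ --- like $m^{4-\beta}$ if $\beta>4$ and like $\vert\log m\vert$ if $\beta=4$, which is exactly where $\beta\ge4$ enters --- so $m$ is bounded below, uniformly in $t$, by a positive constant $\widehat{C}_0=\widehat{C}_0(E[y_0],H[y_0],\beta,\Omega)$. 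Setting $\widetilde{C}_0:=\widehat{C}_0/2$ yields $y\ge2\widetilde{C}_0\ge2C_0$ on $\overline{\Omega}_T$ for every $0<C_0\le\widetilde{C}_0$; since $y$ is then bounded away from $0$, \eqref{eq:thin_film} is uniformly parabolic along $y$, so $y\in L^2(H^4)\cap H^1(L^2)$ by Lemma~\ref{lemma:thin_film_apriori_h4} and is the unique weak solution in that class by standard (linear, frozen-coefficient) parabolic theory. I expect the entropy step to be the main obstacle: since \eqref{eq:thin_film_eps} obeys no comparison principle the approximants $y_\ve$ need not be positive, so one must run the estimate with the mobility-adapted entropy $G_\ve$ (finite at the origin) and recover $\int y^{2-\beta}\le C$ only in the limit, keeping every constant independent of $\ve$ throughout; the borderline exponent $\beta=4$, handled via the logarithmic divergence above, is the other delicate point.
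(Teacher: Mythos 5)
Your proof is correct and follows essentially the same route as the paper's: the Bernis--Friedman entropy method, combining the energy bound (hence uniform spatial H\"older continuity) with the decay of the entropy $H[y(t)]$ and the divergence of $\int_\Omega \vert x-x_0\vert^{-(\beta-2)/2}\dd x$ for $\beta\geq 4$. You additionally carry out the $\ve$-regularization with the mobility-adapted entropy $G_\ve$ explicitly and make the lower bound quantitative rather than arguing by contradiction, which is a careful (and arguably necessary) elaboration of the argument the paper only sketches, but not a different approach.
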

 
\begin{proof} 
Consider (\ref{eq:thin_film}) with $u = 0$. By assumption, $H[y_0] < \infty$, and thus $y_0>0$. By
{Lemmas~\ref{lemma:thin_film_hoelder_space} and \ref{lemma:thin_film_hoelder_time}}, the solution $y:[0,T] \times \overline{\Omega} \rightarrow {\mathbb R}$ is continuous. We argue by contraduction, and may thus infer existence of
$\widehat{t} >0$ and $\widehat{x} \in \overline{\Omega}$ such that $0 = \min_{x \in \overline{\Omega}}  y(\widehat{t},x) = \min y(\widehat{t},\widehat{x})$ otherwise.  However, along a solution, equation \eqref{eq:thin_film} with $u = 0$, the chain rule, and integration by parts lead to
\begin{equation}\label{entropie_1} - \frac{{\rm d}}{{\rm d}t} H[y(t)] = - \frac{1}{1-\beta} \int_{\Omega} y^{1-\beta} y_t\, {\rm d}x = - \int_{\Omega}
y^{-\beta} y_x y^\beta y_{xxx}\, {\rm d}x = \int_{\Omega} y^2_{xx}\, {\rm d}x \geq 0\, . 
\end{equation}
By Lemma~\ref{lemma:thin_film_hoelder_space}  we have  that $0 \leq y(\widehat{t},x) \leq H_{\rm space} \vert x - \widehat{x}\vert^{1/2}$ for every $x \in \overline{\Omega}$. As a consequence,
we may resume from (\ref{entropie_1}) that 
$$ \int_{\Omega} \vert x - \widehat{x}\vert^{-(\frac{\beta}{2} - 1)}\, {\rm d}x \leq H_{\rm space}^{\beta-2}  \int_{\Omega} y(\widehat{t}, x)^{2-\beta}\, {\rm d}x \leq K H[y_0] < \infty\, ,$$
where $K := H_{\rm space}^{\beta-2} (\beta-1)(\beta-2)$.
But since $\beta \geq 4$, the leading integral  is infinite by assumption, which is a contradiction. 
\end{proof}

There are two ways to construct a solution of \eqref{eq:thin_film} by a sequence $\{y_{\ve}\}$ solving \eqref{eq:thin_film_eps} for a sequence $\ve\to0$: one strategy is to restrict to more regular right-hand sides $u\in L^2(H^2)$ which allow uniform estimates as in Lemma~\ref{lemma:thin_film_apriori_h4} with respect to $\ve>0$. Another strategy which we will use here is the following: if all iterates $y_{\ve}$ have a pointwise lower bound which is uniformly bounded away from zero with respect to $\ve>0$, then it is also possible to pass to the limit, even without the use of more regular right-hand sides $u$. 
The following two lemmas reflect both situations independently.

\begin{lemma} \label{lemma:thin_film_eps_convergence_uniform}
Let $\lambda>0$, $u\in L^2(H_0^1\cap H^2)$, and $\lbrace y_{\ve}\rbrace$ be a sequence of solutions of \eqref{eq:thin_film_eps}.  There exists $y\in H^1(L^2)\cap L^2(H^4)$ 
such that $y_{\ve}\to y$ uniformly in $\Omega_T$ for $\ve\to 0$, and y solves
\eqref{eq:thin_film}.
\end{lemma}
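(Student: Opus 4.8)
I would prove this by a compactness argument as $\ve\to 0$, in three movements: establish a priori bounds on $\{y_{\ve}\}$ uniform in $\ve\in(0,1]$, extract a limit $y$ and pin down its regularity, and pass to the limit in the weak formulation of \eqref{eq:thin_film_eps}. For the first movement, the point is that the constants in Lemmas~\ref{lemma:thin_film_hoelder_space}, \ref{lemma:thin_film_hoelder_time} and \ref{lemma:thin_film_apriori_h4} do not depend on $\ve$, so they provide, uniformly in $\ve$: $\sup_t E[y_{\ve}(t)]\le C$; the weighted dissipation bounds $\int_0^T\!\int_{\Omega}(\lambda|y_{\ve}|^{\beta}+\ve)|y_{\ve,xxx}|^2\le C$ and $\int_0^T\!\int_{\Omega}(\lambda|y_{\ve}|^{\beta}+\ve)|y_{\ve,xxxx}|^2\le C$; $\|y_{\ve,xx}\|_{L^{\infty}(L^2)}\le C$; and the Hölder bounds in space (exponent $\tfrac12$) and time (exponent $\tfrac18$). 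Together with mass conservation (Lemma~\ref{lemma:thin_film_equation_mass_conservation}) and Poincar\'e's inequality this bounds $\{y_{\ve}\}$ in $L^{\infty}(H^2)$, hence in $L^{\infty}(\Omega_T)$ (so that $\|\lambda|y_{\ve}|^{\beta}+\ve\|_{L^{\infty}(\Omega_T)}\le C$), and, since $\lambda C_0^{\beta}+\ve\ge\lambda C_0^{\beta}>0$, also in $L^2(H^4)$; a quick interpolation ($\|y_{\ve,xxx}\|_{L^2(L^2)}^2=-\int_0^T\!\int_{\Omega}y_{\ve,xx}\,y_{\ve,xxxx}$) gives $y_{\ve,xxx}$ bounded in $L^2(L^2)$ too. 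Reading off the non-divergence form \eqref{rtm1}, $y_{\ve,t}=-\beta\lambda y_{\ve}^{\beta-1}y_{\ve,x}y_{\ve,xxx}-(\lambda|y_{\ve}|^{\beta}+\ve)y_{\ve,xxxx}+u_x$, and using that $y_{\ve}$ and $y_{\ve,x}$ are uniformly bounded on $\Omega_T$ while $y_{\ve,xxx},y_{\ve,xxxx},u_x$ are bounded in $L^2(L^2)$, one bounds $\{y_{\ve}\}$ in $H^1(L^2)$ as well.

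For the second movement, a subsequence (not relabelled) satisfies $y_{\ve}\rightharpoonup y$ in $L^2(H^4)$ and $y_{\ve,t}\rightharpoonup y_t$ in $L^2(L^2)$, so $y\in H^1(L^2)\cap L^2(H^4)$. By Aubin--Lions compactness together with the one-dimensional embeddings $H^4\hookrightarrow C^3(\overline{\Omega})$ and $H^2\hookrightarrow C^1(\overline{\Omega})$, the family $\{y_{\ve}\}$ is relatively compact in $L^2(0,T;C^3(\overline{\Omega}))$ and in $C([0,T];C^1(\overline{\Omega}))$; hence, along a further subsequence, $y_{\ve}\to y$ uniformly on $\overline{\Omega}_T$ (consistent with Arzel\`a--Ascoli applied to the uniform Hölder bounds), and the initial and boundary conditions survive in the limit, giving $y(0)=y_0$ and $y_x=y_{xxx}=0$ at $a,b$.

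For the third movement, I would test \eqref{eq:thin_film_eps} with $\phi$ smooth and compatible with the boundary conditions: $\int_0^T\!\int_{\Omega}y_{\ve,t}\,\phi+\int_0^T\!\int_{\Omega}(\lambda|y_{\ve}|^{\beta}+\ve)\,y_{\ve,xxx}\,\phi_x=\int_0^T\!\int_{\Omega}u\,\phi_x$. The first term converges by $y_{\ve,t}\rightharpoonup y_t$ in $L^2(L^2)$, the right-hand side does not involve $\ve$, and for the middle term one notes that $\lambda|y_{\ve}|^{\beta}+\ve\to\lambda|y|^{\beta}$ \emph{uniformly} on $\overline{\Omega}_T$ (uniform convergence and uniform boundedness of $y_{\ve}$, Lipschitz continuity of $t\mapsto\lambda t^{\beta}$ on bounded sets, and $\ve\to0$), while $y_{\ve,xxx}\rightharpoonup y_{xxx}$ in $L^2(L^2)$; testing the product against $\psi\in L^2(L^2)$ and writing $\int_0^T\!\int_{\Omega}(\lambda|y_{\ve}|^{\beta}+\ve)y_{\ve,xxx}\,\psi=\int_0^T\!\int_{\Omega}y_{\ve,xxx}\,\bigl[(\lambda|y_{\ve}|^{\beta}+\ve)\psi\bigr]$ and observing that the bracket converges strongly in $L^2(L^2)$, the product converges weakly to $\lambda|y|^{\beta}y_{xxx}$. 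Hence $y$ solves \eqref{eq:thin_film}, and since $y$ already lies in $H^1(L^2)\cap L^2(H^4)$ this identity holds in the strong/weak sense required.

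I expect the main obstacle to be the first movement. Because the fourth-order coefficient $\lambda|y_{\ve}|^{\beta}+\ve$ is in general merely non-negative, the $L^2(H^4)$ and $H^1(L^2)$ bounds with $\ve$-independent constants are precisely where the structure of the equation has to be used: in the higher-order estimate of Lemma~\ref{lemma:thin_film_apriori_h4} the terms led by $\sigma>0$ must be absorbed into $(\lambda C_0^{\beta}+\ve)\|y_{\ve,xxxx}\|^2$ with a choice of $\sigma$ that is \emph{not} allowed to depend on $\ve$, and in controlling the source contribution one needs $u\in L^2(H^2)$ rather than only $u\in L^2(H^1_0)$, so that a derivative can be shifted onto $u$ by integration by parts, e.g.\ $\int_{\Omega}u_x\,y_{\ve,xxxx}=-\int_{\Omega}u_{xx}\,y_{\ve,xxx}$, instead of being paid out of the possibly degenerating dissipation. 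Once these uniform bounds are secured, the compactness and the strong-times-weak passage to the limit are routine.
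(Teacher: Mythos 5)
The decisive feature of this lemma is that, in contrast to Lemma~\ref{lemma:thin_film_eps_convergence_weak}, \emph{no} lower bound $y_{\ve}\geq C_0>0$ is assumed: this is precisely the first of the two strategies described in the paragraph preceding the two lemmas, where the extra regularity $u\in L^2(H^2)$ replaces the positivity assumption. (In the later applications, e.g.\ Step~1 of the proof of Theorem~\ref{thm:thin_film_convergence_eps_to_zero} and the proof of Theorem~\ref{thm:thin_film_existence_opti_regularization}, positivity of the $y_{\ve}$ is \emph{deduced} from the uniform convergence asserted here, so assuming it would make the argument circular.) Your proof leans on that missing hypothesis throughout: Lemmas~\ref{lemma:thin_film_hoelder_space}, \ref{lemma:thin_film_hoelder_time} and \ref{lemma:thin_film_apriori_h4} are all stated under $y\geq C_0$, and your uniform $L^2(H^3)$, $L^2(H^4)$ and $H^1(L^2)$ bounds explicitly divide by $\lambda C_0^{\beta}+\ve\geq\lambda C_0^{\beta}>0$. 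Without positivity the coefficient $\lambda\vert y_{\ve}\vert^{\beta}+\ve$ degenerates as $\ve\to0$ wherever $y_{\ve}$ is small (and Figure~\ref{fig:thin_film_negative_sol_only_state_bsp} shows solutions can even change sign for $u\neq 0$), so the absorption of $\sigma\Vert y_{\ve,xxxx}\Vert^2$ in Lemma~\ref{lemma:thin_film_apriori_h4} and the control of $\Vert y_{\ve,xxx}\Vert_{L^2(L^2)}$ both fail uniformly in $\ve$; the only $\ve$-uniform dissipation bound available is the weighted one, $\int_0^T\int_{\Omega}(\lambda\vert y_{\ve}\vert^{\beta}+\ve)\vert y_{\ve,xxx}\vert^2\leq C$. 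Consequently your third movement also collapses: the strong-times-weak passage to the limit in the flux term needs $y_{\ve,xxx}\rightharpoonup y_{xxx}$ in $L^2(L^2)$, which you no longer have. This is a genuine gap, not a presentational one.

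The paper's route avoids exactly this. With $u\in L^2(H^2)$ one integrates by parts in the forcing term of \eqref{eq:thin_film_hoelder_space_nach_testen}, $-(u_x,y_{xx})=(u_{xx},y_x)\leq\Vert u_{xx}\Vert^2+\Vert y_x\Vert^2$, so that $\sup_t E[y_{\ve}(t)]\leq C$ follows from Gronwall alone, with nothing absorbed into the possibly degenerate dissipation; this gives the uniform $\Con(\Con^{0,\frac12})$ bound without any lower bound on $y_{\ve}$, and together with the H\"older-in-time estimate yields equicontinuity and hence, by Arzel\`a--Ascoli, uniform convergence of a subsequence. Identification of the limit as the (unique) solution of \eqref{eq:thin_film} is then delegated to the construction in \cite{bernis_friedman}, which handles the degenerate flux using only the weighted dissipation bound, and uniqueness upgrades subsequential to full convergence. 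You do correctly identify the role of $u\in L^2(H^2)$, but you deploy it inside the $H^4$ estimate rather than inside the energy estimate, where it is actually needed. To repair your argument you would have to either add the positivity hypothesis (which turns the statement into Lemma~\ref{lemma:thin_film_eps_convergence_weak}) or replace your first movement by estimates that never divide by $\lambda\vert y_{\ve}\vert^{\beta}+\ve$.
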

\begin{proof}
We use integration by parts for the term 
$-(u_x,y_{xx})=(u_{xx},y_x)\leq \Vert u_{xx}\Vert^2 + \Vert y_x\Vert^2$ in \eqref{eq:thin_film_hoelder_space_nach_testen}.
As a consequence, we obtain uniform bounds for $y_{\ve}\in \Con(\Con^{0,\frac12})$ with respect to~$\varepsilon >0$ as stated
in Lemma~\ref{lemma:thin_film_hoelder_space} without the requirement on lower bounds.
Because of Lemma \ref{lemma:thin_film_hoelder_time}, the sequence $\{y_{\ve}\}$ is uniformly bounded in $\Con^{0,\frac18}(\Con^{0,\frac12})$, such that
there exist a subsequence and $y \in \Con(\Con^{0,\frac12})$ such that $y_{\ve}\to y$ uniformly in $\Omega_T$, which solves
\eqref{eq:thin_film}. By \cite[Theorems 3.1 and 4.1]{bernis_friedman}, solutions of \eqref{eq:thin_film} are unique, such that the limit of every convergent subsequence of $\{ y_{\varepsilon}\}$ must be  a solution of of \eqref{eq:thin_film}. Hence the whole sequence $\{ y_\varepsilon\}$ converges to the solution $y$.
\end{proof}

\begin{lemma} \label{lemma:thin_film_eps_convergence_weak}
Let $\lambda>0$, $u\in L^2(H_0^1)$, and  $\lbrace y_{\ve}\rbrace$ be a sequence of  solutions $y_{\ve}$ of \eqref{eq:thin_film_eps}. Assume $y_{\ve}\geq C_0>0$ for all $\ve>0$. There exists $y\in H^1(L^2)\cap L^2(H^4)$ such that $y_{\ve}\to y$ uniformly in $\Omega_T$ for $\ve\to 0$, and $y$ solves \eqref{eq:thin_film}.
\end{lemma}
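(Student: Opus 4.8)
\emph{Proof proposal.} The plan is to follow the structure of the proof of Lemma~\ref{lemma:thin_film_eps_convergence_uniform}, with the uniform positivity hypothesis $y_\ve\geq C_0$ playing the role that the extra regularity $u\in L^2(H^2)$ played there: it is precisely this lower bound that makes Lemmas~\ref{lemma:thin_film_hoelder_space}, \ref{lemma:thin_film_hoelder_time} and \ref{lemma:thin_film_apriori_h4} applicable with constants independent of $\ve>0$. First I would assemble the $\ve$-uniform estimates for $\{y_\ve\}$. Lemma~\ref{lemma:thin_film_hoelder_space} bounds $\sup_t E[y_\ve(t)]$ and, since $\lambda C_0^\beta+\ve\geq\lambda C_0^\beta$, bounds $\Vert y_{\ve,xxx}\Vert_{L^2(L^2)}$; Lemma~\ref{lemma:thin_film_apriori_h4} bounds $\Vert y_{\ve,xx}\Vert_{L^\infty(L^2)}$ and, again using $\lambda C_0^\beta+\ve\geq\lambda C_0^\beta$, bounds $\Vert y_{\ve,xxxx}\Vert_{L^2(L^2)}$; together with one-dimensional Sobolev embeddings this gives a uniform bound for $\{y_\ve\}$ in $L^\infty(H^2)\cap L^2(H^4)$, and in particular $y_\ve$ and $y_{\ve,x}$ bounded in $L^\infty(\Omega_T)$. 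Rewriting the flux in non-divergence form as in \eqref{rtm1} and estimating $\beta\lambda y_\ve^{\beta-1}y_{\ve,x}y_{\ve,xxx}$ and $[\lambda\vert y_\ve\vert^\beta+\ve]y_{\ve,xxxx}$ in $L^2(L^2)$, and using $u_x\in L^2(L^2)$, one gets a uniform bound for $\{y_{\ve,t}\}$ in $L^2(L^2)$, hence for $\{y_\ve\}$ in $H^1(L^2)$. Finally, Lemma~\ref{lemma:thin_film_hoelder_time}, whose constant is likewise $\ve$-independent, bounds $\{y_\ve\}$ in $\Con^{0,\frac18}(\Con^{0,\frac12})$.

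Next I would pass to the limit. The Hölder bounds give equicontinuity and uniform boundedness, so by Arzel\`a--Ascoli, along a subsequence $y_\ve\to y$ uniformly in $\overline{\Omega}_T$ (in particular $y(0)=y_0$), while the weak-compactness bounds give $y\in L^2(H^4)\cap H^1(L^2)$ with $y_{\ve,xxx}\rightharpoonup y_{xxx}$ and $y_{\ve,t}\rightharpoonup y_t$ in $L^2(L^2)$; the uniform convergence transfers the lower bound, $y\geq C_0$. Testing \eqref{eq:thin_film_eps} against a fixed smooth $\phi$ with $\phi_x(a)=\phi_x(b)=0$, the term $\int_0^T\int_\Omega u\phi_x$ is independent of $\ve$, the term $\int_0^T\int_\Omega y_{\ve,t}\phi$ converges by weak convergence of $y_{\ve,t}$, and in the flux term $[\lambda\vert y_\ve\vert^\beta+\ve]y_{\ve,xxx}$ one uses that $\lambda\vert y_\ve\vert^\beta+\ve\to\lambda y^\beta$ uniformly on $\overline{\Omega}_T$ (here the positivity $y_\ve\geq C_0$ is essential, so that $s\mapsto\vert s\vert^\beta$ is Lipschitz on the relevant range, and $\ve\to0$) together with $y_{\ve,xxx}\rightharpoonup y_{xxx}$ in $L^2(L^2)$; the product of a uniformly convergent and a weakly convergent sequence then lets us identify the limit as $\lambda y^\beta y_{xxx}$ when tested against $\phi_x$. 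Thus $y$ solves \eqref{eq:thin_film}.

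It remains to upgrade subsequential convergence to convergence of the whole sequence: since $y\geq C_0>0$, $y$ is a solution of the now non-degenerate equation \eqref{eq:thin_film}, which is unique by \cite[Theorems~3.1 and 4.1]{bernis_friedman} (and also by the standard parabolic uniqueness underlying Lemma~\ref{lemma:thin_film_eps_existence}), so every convergent subsequence of $\{y_\ve\}$ has the same limit $y$ and the whole sequence converges uniformly. I expect the only delicate point to be the limit passage in the nonlinear flux term --- precisely, that uniform convergence of $y_\ve$ together with weak $L^2(L^2)$ convergence of $y_{\ve,xxx}$ is enough, and that the uniform positivity is what keeps the coefficient $\lambda\vert y_\ve\vert^\beta+\ve$ from degenerating as $\ve\to0$; everything else is a routine reassembly of the a priori bounds already at hand.
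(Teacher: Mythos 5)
Your proposal is correct and follows essentially the same route as the paper: the uniform lower bound $y_\ve\geq C_0$ makes the a priori estimates of Lemmas~\ref{lemma:thin_film_hoelder_space}, \ref{lemma:thin_film_hoelder_time} and \ref{lemma:thin_film_apriori_h4} $\ve$-independent, giving weak compactness in $L^2(H^4)\cap H^1(L^2)$ plus uniform convergence by equicontinuity, and the nonlinear flux is handled by exactly the paper's splitting into $[\lambda\vert y_\ve\vert^\beta-\lambda\vert y\vert^\beta]y_{\ve,xxx}$ (uniformly convergent coefficient times bounded sequence) and $\lambda\vert y\vert^\beta[y_{\ve,xxx}-y_{xxx}]$ (fixed coefficient times weakly convergent sequence), with uniqueness upgrading to convergence of the whole sequence. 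No gaps.
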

\begin{proof}
Because of $y_{\ve}\geq C_0$ uniformly in $\ve>0$, the estimate \eqref{eq:thin_film_hoelder_space_nach_testen} 
yields a uniform bound for $y_{\ve}$ in $L^2(H^3)\cap H^1((H^1)^*)$.
Moreover, the proof of Lemma \ref{lemma:thin_film_apriori_h4} shows that $\{y_{\ve}\}$ is uniformly bounded in $L^2(H^4)\cap H^1(L^2)$. Hence, there exist a limiting function $y\in L^2(H^4)\cap H^1(L^2)$, and a subsequence (not relabelled) such that $y_{\ve}\rightharpoonup y$ weakly in $L^2(H^4)\cap H^1(L^2)$. 
By the embedding $L^2(H^4)\cap H^1(L^2)\subset \Con(\Omega_T)$ and Lemmas~\ref{lemma:thin_film_hoelder_space} and \ref{lemma:thin_film_hoelder_time}, we have that $\lbrace y_{\ve}\rbrace$ is equicontinuous, uniformly bounded, hence for another subsequence (not relabelled) $y_{\ve}\to y$ uniformly in $\Omega_T$.

In order to show that $y$ solves \eqref{eq:thin_film}, we perform limits term by term in \eqref{eq:thin_film_eps};
it is due to the uniform lower bound for each of the $\{y_{\ve}\}$ that this is more easy than in the proof of Lemma~\ref{lemma:thin_film_eps_convergence_uniform}. It is clear for the linear terms; for the nonlinear term, we calculate for $\varphi\in\Con^{\infty}(\Omega_T)$ along the given subsequence 
\[ \big(\lambda \vert y_{\ve}\vert^\beta y_{\ve,xxx} - \lambda \vert y\vert^\beta y_{xxx},\varphi_x\big) = \big( [\lambda \vert y_{\ve}\vert^\beta -\lambda\vert y\vert^\beta ]y_{\ve,xxx},\varphi_x\big) + \big( \lambda\vert y\vert^\beta [y_{\ve,xxx}-y_{xxx}],\varphi_x\big) \to 0. \]
Because of the uniqueness of solutions of \eqref{eq:thin_film}, we may again infer convergence of the whole sequence $\{ y_{\varepsilon} \}$.
\end{proof}

The assumption $y_{\ve}\geq C_0>0$ for all $\{ y_{\ve} \}$ in Lemma~\ref{lemma:thin_film_eps_convergence_weak} is rather strong. However, such a sequence exists 
for an optimal control problem which involves \eqref{eq:thin_film_eps} together with $y_{\ve}\geq C_0$; cf.~Problem~\ref{prob:thin_film_opti_thin_film_eps} below.

\section{Analysis of the optimization problem without regularization} \label{sec:thin_film_opt_original}

We show solvability for the original optimization Problem \ref{prob:thin_film_opti} and derive necessary optimality conditions. 
The constant $C_0>0$ has to be chosen in such a way that Lemma~\ref{lemma:thin_film_g0_gleich_0_positivity} holds.

\begin{thm} \label{thm:thin_film_existence_opti_mod}
{Let $\beta \geq 4$ and $0 < C_0 \leq \widetilde{C}_0$, where $\widetilde{C}_0 = \widetilde{C}_0(E[y_0], H[y_0], \Omega, \beta)>0$.} Then Problem~\ref{prob:thin_film_opti} has at least one solution.
\end{thm}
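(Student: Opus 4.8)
The plan is to prove existence of a minimizer by the direct method of the calculus of variations, with the key twist that the state constraint $y \geq C_0$ is not merely a restriction on the feasible set but is actually \emph{automatically satisfied} by a suitable admissible pair, so that the feasible set is nonempty. First I would check nonemptiness of the admissible set: by Lemma~\ref{lemma:thin_film_g0_gleich_0_positivity}, for the given $\beta \geq 4$ and with $C_0 \leq \widetilde C_0$, there exists a control $u \in L^2(H_0^1)$ (in fact $u \equiv 0$) whose associated weak solution $y$ of \eqref{eq:thin_film} satisfies $y \geq 2C_0 > C_0$ in $\overline\Omega_T$; moreover by Lemmas~\ref{lemma:thin_film_hoelder_space}, \ref{lemma:thin_film_hoelder_time} and \ref{lemma:thin_film_apriori_h4} this $y$ lies in $L^2(H^4)\cap H^1(L^2)$, so $(y,u)$ is genuinely admissible for Problem~\ref{prob:thin_film_opti}. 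Hence $\inf J =: m \geq 0$ is finite, and I take a minimizing sequence $(y_k, u_k)$ of admissible pairs with $J(y_k,u_k) \to m$.

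Next I would extract uniform bounds. Since $\frac\alpha2\|u_{k,x}\|_{L^2(L^2)}^2 \leq J(y_k,u_k) \leq m+1$ for large $k$, the sequence $\{u_{k,x}\}$ is bounded in $L^2(L^2)$, hence by the Poincaré inequality on $H_0^1$ the sequence $\{u_k\}$ is bounded in $L^2(H_0^1)$. Because each $y_k$ satisfies $y_k \geq C_0 > 0$ and solves \eqref{eq:thin_film} (which is \eqref{eq:thin_film_eps} with $\ve=0$), Lemmas~\ref{lemma:thin_film_hoelder_space}, \ref{lemma:thin_film_hoelder_time} and \ref{lemma:thin_film_apriori_h4}, applied with a bound on $\|u_{k,x}\|_{L^2(L^2)}$ uniform in $k$, yield that $\{y_k\}$ is bounded in $L^2(H^4)\cap H^1(L^2)$ and equicontinuous and uniformly bounded on $\overline\Omega_T$ (indeed uniformly in $\Con^{0,1/8}(\Con^{0,1/2})$). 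Passing to subsequences (not relabelled), I get $u_k \rightharpoonup u^* $ weakly in $L^2(H_0^1)$, $y_k \rightharpoonup y^*$ weakly in $L^2(H^4)\cap H^1(L^2)$, and, via the compact embedding $L^2(H^4)\cap H^1(L^2) \hookrightarrow \Con(\overline\Omega_T)$ together with Arzelà--Ascoli, $y_k \to y^*$ uniformly in $\Omega_T$.

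Then I would verify that the limit pair is admissible. The uniform convergence $y_k \to y^*$ and $y_k \geq C_0$ immediately give $y^* \geq C_0$ in $\Omega_T$. To see that $(y^*,u^*)$ solves \eqref{eq:thin_film}, I would pass to the limit term by term in the weak formulation, exactly as in the proof of Lemma~\ref{lemma:thin_film_eps_convergence_weak}: the linear terms $\int y_k \varphi_t$, $\int u_k \varphi_x$ converge by weak/weak-$*$ convergence, and for the nonlinear term I split
\[
\big(\lambda|y_k|^\beta y_{k,xxx} - \lambda|y^*|^\beta y^*_{xxx},\varphi_x\big) = \big([\lambda|y_k|^\beta - \lambda|y^*|^\beta] y_{k,xxx},\varphi_x\big) + \big(\lambda|y^*|^\beta[y_{k,xxx}-y^*_{xxx}],\varphi_x\big),
\]
where the first term vanishes because $|y_k|^\beta \to |y^*|^\beta$ uniformly while $\{y_{k,xxx}\}$ is bounded in $L^2(L^2)$, and the second vanishes by the weak convergence $y_{k,xxx} \rightharpoonup y^*_{xxx}$ in $L^2(L^2)$ against the fixed $L^\infty$ function $\lambda|y^*|^\beta \varphi_x$. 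Thus $y^*$ is the weak solution of \eqref{eq:thin_film} with control $u^*$, so $(y^*,u^*)$ is admissible.

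Finally, lower semicontinuity closes the argument: the tracking term $\frac12\|y-\tilde y\|^2_{L^2(\Omega_T)}$ is continuous under $y_k \to y^*$ in $L^2(\Omega_T)$ (uniform convergence on the bounded cylinder suffices), and the control term $\frac\alpha2\|u_x\|^2_{L^2(L^2)}$ is weakly lower semicontinuous as the square of a norm under $u_{k,x} \rightharpoonup u^*_x$. Hence $J(y^*,u^*) \leq \liminf_k J(y_k,u_k) = m$, and since $(y^*,u^*)$ is admissible we get $J(y^*,u^*) = m$, i.e. $(y^*,u^*)$ is a minimizer. I expect the main obstacle to be the passage to the limit in the degenerate nonlinear term: one must be sure that the uniform lower bound $y_k \geq C_0$ (which carries through to $y^* \geq C_0$) and the uniform $\Con$-convergence are exactly what is needed to make $|y_k|^\beta$ behave well, since without the lower bound the coefficient $\lambda|y|^\beta$ could degenerate and the standard parabolic uniqueness/convergence machinery of \cite{bernis_friedman} would not directly apply — it is precisely here that the role of the state constraint, guaranteed feasible by Lemma~\ref{lemma:thin_film_g0_gleich_0_positivity}, is indispensable.
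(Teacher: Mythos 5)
Your proposal is correct and follows essentially the same route as the paper: feasibility of the admissible set via Lemma~\ref{lemma:thin_film_g0_gleich_0_positivity}, uniform bounds on a minimizing sequence from the coercivity of $J$ in $u_x$ together with Lemma~\ref{lemma:thin_film_apriori_h4}, passage to the limit in the nonlinear term exactly as in Lemma~\ref{lemma:thin_film_eps_convergence_weak}, and weak lower semicontinuity of $J$. The only difference is cosmetic: you write out the limit passage and the uniform-convergence argument explicitly, whereas the paper delegates these steps to Lemma~\ref{lemma:thin_film_eps_convergence_weak}.
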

\begin{proof}
{\em Step 1:}
By Lemma \ref{lemma:thin_film_g0_gleich_0_positivity}, there exists at least one $\underline{u}\in L^2(H_0^1)$ such that all side constraints (i.e., the equation \eqref{eq:thin_film}, and $y = y(\underline{u})\geq C_0$ in $\Omega_T$) are satisfied. Therefore, we have 
\[ \inf J(y,u)=:J^*>-\infty, \]
where the infimum is taken over all feasible pairs $(y,u)$.

{\em Step 2:}
Hence, there exists a sequence $\lbrace (y_i,u_i) \rbrace$ fulfilling \eqref{eq:thin_film}, $y_i\geq C_0$, such that $J(y_i,u_i)\searrow J^*$. By definition of the functional $J$, $u_i$ is bounded in $L^2(H_0^1)$ and there exists a $u\in L^2(H_0^1)$ such that $u_i\rightharpoonup u$ weakly in $L^2(H_0^1)$ (up to subsequences).

By Lemma \ref{lemma:thin_film_apriori_h4}, members of the sequence $\{y_i\} \subset L^2(H^4)\cap H^1(L^2) $ are bounded by corresponding ones in $\{u_i\} \subset L^2(H^1_0)$, hence $y_i$ is uniformly bounded in $L^2(H^4)\cap H^1(L^2)$. By Lemma \ref{lemma:thin_film_eps_convergence_weak}, there exists a $y\in L^2(H^4)\cap H^1(L^2)$ such that $y_i\rightharpoonup y$ weakly in $L^2(H^4)\cap H^1(L^2)$ and $y_i\to y$ uniformly in $\Omega_T$, and $y$ solves \eqref{eq:thin_film}. Moreover, we have $y\geq C_0$.

{\em Step 3:} By the weak lower semicontinuity of the functional $J$, $(y,u)$ is a minimizer of Problem~\ref{prob:thin_film_opti}. 
\end{proof}

In the rest of this section, we derive necessary optimality conditions for a minimum obtained by Theorem~\ref{thm:thin_film_existence_opti_mod}. The key step to derive this is the following abstract result about optimal control problems with state constraints, which is obtained in \cite{alibert_raymond}.

\begin{lemma} \label{lemma:thin_film_alibert_raymond}
Let $X,V,W$ be Banach spaces, $U$ be a separable Banach space, let $J:X\times U\to \R$, $G:X\times U\to V$, $H:X\to W$ be mappings, and $C\subseteq W$ be a set.

Let $(\bar x,\bar u)\in X\times U$ be a minimum of the optimal control problem 
\[ J(\bar x,\bar u) = \min_{(x,u)\in S} J(x,u)\]
with 
\[ S:=\big\lbrace (x,u)\in X\times U: \; G(x,u)=0, \; H(x)\in C\big\rbrace \]
and let the following assumptions be true:
\begin{enumerate}
\item $G:X\times U\to V$ is Frechet differentiable at $(\bar x,\bar u)$,
\item $H:X\to W$ is Frechet differentiable at $\bar x$,
\item $\varnothing\neq C\subseteq W$ is a convex subset with nonempty interior (measured in the topology of $W$),
\item $G_x'(\bar x,\bar u):X\to V$ is surjective.
\end{enumerate}
Then there exist $(p,\mu,\zeta)\in V^*\times W^*\times \R$ such that
\begin{subequations}
\begin{align}
\zeta \langle J_x'(\bar x,\bar u),x\rangle_{X,X^*} + \langle p,G_x'(\bar x,\bar u)x\rangle_{V,V^*} + \langle \mu,H'(\bar x)x\rangle_{W,W^*} &= 0 \quad \forall x\in X, \\
\zeta \langle J_u'(\bar x,\bar u),u\rangle_{U,U^*} + \langle p,G_u'(\bar x,\bar u)u\rangle_{V,V^*} &= 0 \quad \forall u\in U, \\
\zeta&\geq 0, \\
\langle \mu,w-H(\bar x)\rangle_{W,W^*} &\leq 0 \quad \forall w\in C
\end{align} \label{eq:thin_film_alibert_raymond_nec_opti}%
\end{subequations}
and if $\zeta=0$ then $\langle \mu, w\rangle_{W,W^*}\neq 0$ for some $w\in C$.

If we additionally assume that there exists $(\underline{x},\underline{u})\in X\times U$ such that
\begin{subequations}
\begin{align}
G_x'(\bar x,\bar u)\underline{x} + G_u'(\bar x,\bar u)(\underline{u}-\bar u)&=0, \label{eq:thin_film_alibert_raymond_lagrange_additional_1} \\
H(\bar x)+H'(\bar x)\underline{x}&\in \interior C, \label{eq:thin_film_alibert_raymond_lagrange_additional_2}
\end{align}
\end{subequations}
then we can take $\zeta=1$.
\end{lemma}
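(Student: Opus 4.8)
The assertion is an abstract Karush--Kuhn--Tucker multiplier rule for a problem with an equality constraint $G(x,u)=0$ and a convex state constraint $H(x)\in C$, and the plan is to prove it by linearization and separation (a Dubovitskii--Milyutin argument); the reference \cite{alibert_raymond} obtains the same conclusion through Ekeland's variational principle applied to a penalization of $J$ and of $\mathrm{dist}_W(H(\cdot),C)$, and either route works. Two reductions drive the proof. First, hypothesis~(4): since $G_x'(\bar x,\bar u):X\to V$ is onto, so is $G'(\bar x,\bar u):X\times U\to V$; by the open mapping theorem $(G'(\bar x,\bar u))^*$ has closed range and is bounded below, and by Lyusternik's theorem the tangent cone to $M:=\{(x,u):G(x,u)=0\}$ at $(\bar x,\bar u)$ equals the closed subspace $\ker G'(\bar x,\bar u)$. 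Second, hypothesis~(3): since $\interior C\neq\varnothing$ and $C$ is convex, the set of directions $(h,k)$ for which $H(\bar x)+\lambda H'(\bar x)h\in\interior C$ for some $\lambda>0$, call it $K_1$, is an \emph{open} convex cone.

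Now set $K_0:=\{(h,k)\in X\times U:\langle J'(\bar x,\bar u),(h,k)\rangle_{X\times U,(X\times U)^*}<0\}$ and $K_2:=\ker G'(\bar x,\bar u)$. Optimality of $(\bar x,\bar u)$ together with the Lyusternik tangent-cone identity forces $K_0\cap K_1\cap K_2=\varnothing$: a common direction would, after correction along $M$ by the bounded right inverse of $G_x'$, yield a feasible arc with strictly decreasing cost. Since $K_0$ and $K_1$ are open convex cones and $K_2$ is a closed subspace, the Dubovitskii--Milyutin lemma (i.e., Hahn--Banach separation) yields functionals $f_0,f_1,f_2$, not all zero, with $f_0+f_1+f_2=0$, where $f_0=-\zeta J'(\bar x,\bar u)$ with $\zeta\ge0$, $f_1=\langle\mu,H'(\bar x)\cdot\rangle$ with $\mu$ in the normal cone of $C$ at $H(\bar x)$, and $f_2=\langle p,G'(\bar x,\bar u)\cdot\rangle$ with $p\in V^*$ (the last because $(\ker G'(\bar x,\bar u))^\perp=\mathrm{range}\,(G'(\bar x,\bar u))^*$ by~(4)). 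Restricting $f_0+f_1+f_2=0$ to $X\times\{0\}$ and to $\{0\}\times U$ gives the first two equations of \eqref{eq:thin_film_alibert_raymond_nec_opti}, membership of $\mu$ in the normal cone gives the last one, and $\zeta\ge0$ the third. For non-triviality: if $\zeta=0$ and $\mu=0$ the first equation reads $(G_x'(\bar x,\bar u))^*p=0$, whence $p=0$ by injectivity of $(G_x')^*$ from~(4); hence $(\zeta,\mu)\neq(0,0)$, and if $\zeta=0$ then $\mu\neq0$, which since $\interior C\neq\varnothing$ means $\langle\mu,w\rangle_{W,W^*}\neq0$ for some $w\in C$.

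It remains to upgrade to $\zeta=1$ under \eqref{eq:thin_film_alibert_raymond_lagrange_additional_1}--\eqref{eq:thin_film_alibert_raymond_lagrange_additional_2}. Suppose $\zeta=0$. Evaluating the first equation of \eqref{eq:thin_film_alibert_raymond_nec_opti} at $x=\underline x$ and the second at $u=\underline u-\bar u$ and adding, the two $G$-terms cancel by \eqref{eq:thin_film_alibert_raymond_lagrange_additional_1}, leaving $\langle\mu,H'(\bar x)\underline x\rangle_{W,W^*}=0$. By \eqref{eq:thin_film_alibert_raymond_lagrange_additional_2} the point $\widehat w:=H(\bar x)+H'(\bar x)\underline x$ lies in $\interior C$, so $\widehat w+\rho b\in C$ for all $b$ in the unit ball of $W$ and some $\rho>0$; inserting these into the variational inequality $\langle\mu,w-H(\bar x)\rangle_{W,W^*}\le0$ and using $\langle\mu,H'(\bar x)\underline x\rangle=0$ yields $\rho\langle\mu,b\rangle_{W,W^*}\le0$ for every such $b$, hence $\mu=0$, and then $p=0$ as above, contradicting non-triviality. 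Therefore $\zeta>0$, and dividing \eqref{eq:thin_film_alibert_raymond_nec_opti} through by $\zeta$ normalizes it to $\zeta=1$.

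The main obstacle is the passage from optimality to the emptiness $K_0\cap K_1\cap K_2=\varnothing$ in the correct function-space setting --- this is where Lyusternik's theorem and the right inverse of $G_x'$ enter, and where one must check that the correction term stays higher order --- and, in the Ekeland-based variant, the preservation of non-triviality of the multipliers in the limit of the penalization; in both approaches the two constraint qualifications, surjectivity of $G_x'(\bar x,\bar u)$ and $\interior C\neq\varnothing$, are exactly the ingredients that make the argument go through. The remaining steps --- the Lyusternik tangent-cone computation, the Hahn--Banach separation, and the closed-range and bounded-below properties of $(G')^*$ --- are standard.
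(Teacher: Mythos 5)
The paper itself offers no proof of this lemma: it is quoted verbatim as an abstract result ``obtained in \cite{alibert_raymond}'', whose own derivation goes through Ekeland's variational principle applied to a penalized functional. Your Dubovitskii--Milyutin route is therefore genuinely different from the cited source, and as an outline it is sound: the emptiness of $K_0\cap K_1\cap K_2$, the separation yielding $f_0+f_1+f_2=0$ with $f_0$ in the polar of the descent cone, $f_1=(H'(\bar x))^*\mu$ with $\mu$ in the normal cone of $C$ at $H(\bar x)$ (this identification uses $\interior C\neq\varnothing$), and $f_2=(G'(\bar x,\bar u))^*p$ via the closed range theorem are all correctly assembled. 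Your non-triviality argument (surjectivity of $G_x'$ gives injectivity of its adjoint, so $\zeta=0$ and $\mu=0$ force $p=0$) and your $\zeta=1$ upgrade (cancel the $p$-terms using \eqref{eq:thin_film_alibert_raymond_lagrange_additional_1}, then test the variational inequality with $\widehat w+\rho b$ to kill $\mu$) are exactly right and are, in substance, the same computations one would extract from \cite{alibert_raymond}.

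The one point you flag but do not close is also the one that actually matters: Lyusternik's tangent-cone identity and the ``correction along $M$ by the bounded right inverse'' require \emph{strict} (or continuous) Fr\'echet differentiability of $G$ near $(\bar x,\bar u)$, not merely Fr\'echet differentiability at the point as hypothesis~(1) states --- with pointwise differentiability alone, surjectivity of the derivative does not imply metric regularity, and the feasible arc with strictly decreasing cost need not exist. The Ekeland-based proof in \cite{alibert_raymond} faces the same regularity requirement, so this is arguably a transcription issue in the lemma's hypotheses rather than a defect of your argument; in the paper's application $G$ is smooth, so nothing is lost. If you want the proof to stand on its own, either strengthen~(1) to strict differentiability or supply the higher-order error estimate for the Newton-type correction explicitly.
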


We now apply this general result to our setup in Problem~\ref{prob:thin_film_opti}. Let $X:=W:= L^2(H^4)\cap H^1(L^2)$,  $U:=L^2(H_0^1)$, $V:=L^2(L^2)$, and $C:=\lbrace v\in W: \; v\geq C_0 \text{ in } \Omega_T\rbrace$. Since $W\subset \Con(\overline{\Omega_T})$ by Sobolev embeddings, the set $C$ is well-defined.

The function $G$ is given by 
\[ G(y,u) := y_t + \big( \lambda\vert y\vert^{\beta}y_{xxx}\big)_x - u_x, \]
while $H$ is given by $H(y):=y$. We omit initial conditions and boundary conditions in $G$, which may be treated by standard methods; see, e.g.,~\cite[Section 2.6]{gunzburger_flow_control_blau}.

\begin{lemma} \label{lemma:thin_film_well-def_G_H}%
\leavevmode
\begin{enumerate}
\item The function $G:X\times U\to V$ is well-defined.
\item The function $H:X\to W$ is well-defined.
\item The set $C$ is convex with nonempty interior (measured in the topology of $W$).
\end{enumerate}
\end{lemma}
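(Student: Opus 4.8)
The plan is to verify the three assertions of Lemma~\ref{lemma:thin_film_well-def_G_H} one by one, since each is essentially a bookkeeping exercise in Sobolev embeddings and elementary convexity, with the only genuine subtlety being the nonemptiness of the interior of $C$.

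First, for item (1), I would check that $G$ maps $X \times U = (L^2(H^4)\cap H^1(L^2)) \times L^2(H_0^1)$ into $V = L^2(L^2)$. The term $y_t$ lies in $L^2(L^2)$ by definition of $X$, and $u_x \in L^2(L^2)$ since $u \in L^2(H_0^1)$. For the nonlinear term $(\lambda|y|^\beta y_{xxx})_x = \beta\lambda |y|^{\beta-2} y\, y_x y_{xxx} + \lambda |y|^\beta y_{xxxx}$, I would use the one-dimensional embedding $W \hookrightarrow \Con(\overline{\Omega_T})$ (so $y$, and in fact $y_x$, $y_{xx}$ are bounded, since $H^4(\Omega) \hookrightarrow \Con^3(\overline\Omega)$ and Lemmas~\ref{lemma:thin_film_hoelder_space}--\ref{lemma:thin_film_hoelder_time} give time continuity) to bound $|y|^{\beta-1}$, $|y|^\beta$, $y_x$ pointwise in $L^\infty(\Omega_T)$, leaving $y_{xxx} \in L^\infty(L^2) \subset L^2(L^2)$ and $y_{xxxx} \in L^2(L^2)$; hence the product is in $L^2(L^2)$. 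Item (2) is immediate: $H(y) = y$ clearly maps $X = W$ into $W$.

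For item (3), convexity of $C = \{v \in W : v \geq C_0 \text{ in } \Omega_T\}$ is trivial, since the pointwise inequality $v \geq C_0$ is preserved under convex combinations. The substantive point is that $C$ has nonempty interior in the topology of $W$. Here I would exhibit an explicit element in the interior: the constant function $v \equiv C_0 + 1$ (or any smooth function bounded below by, say, $C_0 + 1$ that satisfies the boundary conditions $v_x = v_{xxx} = 0$, e.g.\ $y_0$ shifted up if needed — but the constant works and trivially lies in $W$). Using the embedding $W \hookrightarrow \Con(\overline{\Omega_T})$, there is a constant $c_W > 0$ with $\|w\|_{\Con(\overline{\Omega_T})} \leq c_W \|w\|_W$ for all $w \in W$; then any $w \in W$ with $\|w - v\|_W < 1/c_W$ satisfies $\|w - v\|_{\Con(\overline{\Omega_T})} < 1$, hence $w \geq v - 1 = C_0$ pointwise, so $w \in C$. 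This shows the open $W$-ball of radius $1/c_W$ around $v$ lies in $C$, i.e.\ $v \in \interior C$.

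The main obstacle I anticipate is the first part: one must be careful that the nonlinear term in $G$ really lands in $L^2(L^2)$ and not merely in some larger space, which forces the use of the full regularity $y \in L^2(H^4)$ rather than just the weak-solution regularity — this is precisely why Problem~\ref{prob:thin_film_opti} was set up with $u \in L^2(H_0^1)$ and states in $L^2(H^4)\cap H^1(L^2)$. Once the embedding $W \hookrightarrow \Con(\overline{\Omega_T})$ and the splitting \eqref{rtm1} are invoked, the estimates are routine applications of Hölder's inequality with one factor in $L^\infty(\Omega_T)$ and one in $L^2(L^2)$; no Gronwall or energy argument is needed here since we only need qualitative membership, not quantitative bounds.
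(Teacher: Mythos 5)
Your proof is correct and follows essentially the same route as the paper: part (1) is the computation underlying Lemma~\ref{lemma:thin_film_apriori_h4} (which the paper simply cites), part (2) is trivial, and part (3) rests, as in the paper, on the continuity of the embedding $W\hookrightarrow \Con(\overline{\Omega_T})$ applied to an explicit interior point such as a constant above $C_0$. Two harmless over-claims: for a general $y\in L^2(H^4)\cap H^1(L^2)$ one only gets $y\in\Con([0,T];H^2)$, so $y_{xx}$ need not lie in $L^\infty(\Omega_T)$ and $y_{xxx}$ only in $L^2(H^1)\subset L^2(L^2)$ rather than $L^\infty(L^2)$ --- but your argument only uses $y,y_x\in L^\infty(\Omega_T)$ and $y_{xxx},y_{xxxx}\in L^2(L^2)$, which do hold.
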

\begin{proof}
\begin{enumerate}
\item This follows from Lemma~\ref{lemma:thin_film_apriori_h4}.
\item Clear by definition.
\item Clearly, the set $C$ is convex, since it is the intersection of two convex sets. We note that the set $\tilde{C}:= \lbrace v\in \Con(\overline{\Omega_T}): \; v\geq C_0 \text{ in } \Omega_T\rbrace$ has nonempty interior (e.g., $\hat{v} =  2C_0$ is an interior point), i.e., there exist a point $\hat{v}\in C$ and $r>0$ such that $B_r(\hat{v})\subset \tilde{C}$. Without loss of generality, we can assume that $\hat{v}\in W$ due to the density of $W\subset \Con(\overline{\Omega}_T)$. Since the embedding $\id:W\to \Con(\overline{\Omega_T})$ is continuous by Sobolev embeddings, the preimage $\id^{-1}(B_r(\hat{v}))\subset C$ is open, hence there exists an open neighborhood of $\id^{-1}(\hat{v})$, and this $C$ has nonempty interior in the topology of $W$.
\end{enumerate}
\end{proof}

We now check that the remaining assumptions in Lemma~\ref{lemma:thin_film_alibert_raymond} are valid. In order to write down \eqref{eq:thin_film_alibert_raymond_nec_opti}, we have to show that $G_x'(\bar x,\bar u):X\to V$ is surjective, which is done in the following.\

\begin{lemma} \label{lemma:thin_film_frechet_ableitung_G} 
The function $G$ as defined above has the following Frechet derivatives.
\begin{align*}
\left\langle G_{y}'\big( \bar y,\bar u\big),\delta y\right\rangle ={}& (\delta y)_t + \big(\langle \beta\lambda {\bar y}^{\beta-1},\delta y\rangle \bar y_{xxx}\big)_x + \big(\lambda \vert \bar y\vert^\beta (\delta y)_{xxx}\big)_x && \forall \delta y\in X, \\
\left\langle G_{u}'\big( \bar y,\bar u\big),\delta u\right\rangle ={}& -(\delta u)_x && \forall {\delta u}\in U.
\end{align*}
\end{lemma}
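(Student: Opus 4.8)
The plan is to compute the two Fr\'echet derivatives directly from the definition of $G(y,u) = y_t + \big(\lambda\vert y\vert^{\beta}y_{xxx}\big)_x - u_x$, exploiting that $G$ splits into a part linear in $u$ (namely $-u_x$) and a part depending only on $y$. First I would treat $G_u'$: since $u \mapsto -u_x$ is already a bounded linear operator from $U = L^2(H_0^1)$ into $V = L^2(L^2)$, its Fr\'echet derivative at any point is the operator itself, which gives $\langle G_u'(\bar y,\bar u),\delta u\rangle = -(\delta u)_x$ with no remainder term to estimate. This part is essentially immediate once one notes the boundedness $\Vert (\delta u)_x\Vert_{L^2(L^2)} \le \Vert \delta u\Vert_{L^2(H_0^1)}$.

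For $G_y'$ the plan is to isolate the only nonlinear term, $N(y) := \big(\lambda\vert y\vert^{\beta}y_{xxx}\big)_x$, since $y \mapsto y_t$ is again linear and bounded $X \to V$. I would expand $N(\bar y + \delta y) - N(\bar y)$ using that on the relevant set the states satisfy $\bar y \ge C_0 > 0$ (recall $X = L^2(H^4)\cap H^1(L^2) \subset \Con(\overline{\Omega_T})$, and the derivative is taken at a feasible $\bar y$), so $\vert \bar y\vert^\beta = \bar y^\beta$ is a smooth function of $\bar y$ with derivative $\beta \bar y^{\beta-1}$. Formally linearizing the product $\bar y^\beta \bar y_{xxx}$ yields the stated two contributions: the term $\big(\beta\lambda \bar y^{\beta-1}\,\delta y\,\bar y_{xxx}\big)_x$ from varying the coefficient, and $\big(\lambda \bar y^\beta (\delta y)_{xxx}\big)_x$ from varying the third derivative. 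I would then verify that $\delta y \mapsto$ this sum is a bounded linear map $X \to V$: each factor is controlled using $\bar y, \delta y \in \Con(\overline{\Omega_T})$ together with the $H^4$-in-space regularity giving $\bar y_{xxx}, (\delta y)_{xxx}$ bounded in $L^\infty(L^2)$ or $L^2(L^2)$ as needed, and the outer $x$-derivative is absorbed because $H^4 \subset W^{4,\infty}$ in one space dimension, or more carefully by pairing against $V^* = L^2(L^2)$ and integrating by parts.

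The genuinely technical step — and the main obstacle — is the remainder estimate: one must show $\Vert N(\bar y + \delta y) - N(\bar y) - \big(\beta\lambda\bar y^{\beta-1}\delta y\,\bar y_{xxx}\big)_x - \big(\lambda\bar y^\beta(\delta y)_{xxx}\big)_x\Vert_{V} = o(\Vert \delta y\Vert_X)$ as $\Vert\delta y\Vert_X \to 0$. The plan here is to write $\vert \bar y + \delta y\vert^\beta - \bar y^\beta - \beta\bar y^{\beta-1}\delta y = R(\bar y,\delta y)$ with $\vert R(\bar y,\delta y)\vert \le C(C_0,\beta,\Vert\bar y\Vert_\infty,\Vert\delta y\Vert_\infty)\,\vert\delta y\vert^2$ by Taylor's theorem (valid since $\bar y \ge C_0$ and $\delta y$ is small in $\Con$, so the argument stays in a compact set bounded away from $0$), then distribute this quadratic bound through the product with $y_{xxx}$ and $(\delta y)_{xxx}$ and through the outer derivative, using the one-dimensional Sobolev embeddings and the uniform bounds of Lemma~\ref{lemma:thin_film_apriori_h4} to convert the $\Con$-smallness of $\delta y$ into $X$-smallness of the remainder. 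I would also handle the cross term $\big(\lambda[\vert\bar y+\delta y\vert^\beta - \bar y^\beta](\delta y)_{xxx}\big)_x$, which is linear-times-linear in $\delta y$ hence genuinely quadratic and likewise $o(\Vert\delta y\Vert_X)$. Since these are routine but tedious product estimates of exactly the type already carried out in the proofs of Lemmas~\ref{lemma:thin_film_hoelder_space}--\ref{lemma:thin_film_apriori_h4}, I would indicate the structure and refer to those arguments rather than reproducing every inequality.
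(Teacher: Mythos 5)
Your plan is correct and coincides with the paper's approach: the paper's proof is literally the one-line assertion that $G$ is smooth and the derivative is a straightforward calculation, and your proposal is exactly that calculation carried out (linearity of $y\mapsto y_t$ and $u\mapsto -u_x$, product-rule linearization of $\vert y\vert^\beta y_{xxx}$ at a feasible $\bar y\geq C_0$, and a quadratic Taylor remainder using $X\subset \Con(\overline{\Omega_T})$). One minor slip: $H^4(\Omega)\not\subset W^{4,\infty}(\Omega)$ even in one dimension (only $H^4\subset\Con^{3,\frac12}$), but your alternative route of expanding the outer $x$-derivative in non-divergence form and estimating the resulting products as in the proof of Lemma~\ref{lemma:thin_film_apriori_h4} handles that term correctly.
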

\begin{proof}
The function $G$ is smooth and the derivation of it is a straight forward calculation.
\end{proof}

\begin{lemma} \label{lemma:thin_film_ableitung_surj}
For every $\Phi\in L^2(L^2)$, there exists  $v\in L^2(H^4)\cap H^1(L^2)$ such that
\begin{align} 
\left\langle G_{y}'\big(\bar y,\bar u\big),v\right\rangle = \Phi \label{eq:thin_film_G_surj_lin_thin_film_to_show}
\end{align}
together with the initial conditions $v(0,.)=0$ as well as the boundary conditions $v_x=v_{xxx}=0$ in $a,b$.
\end{lemma}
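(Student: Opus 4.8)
The statement asserts surjectivity of the linearized operator $G_y'(\bar y,\bar u)$ onto $L^2(L^2)$, which amounts to solving a linear, non-autonomous fourth-order parabolic PDE of the form
\[
v_t + \bigl(\beta\lambda\,\bar y^{\beta-1}\bar y_{xxx}\,v\bigr)_x + \bigl(\lambda|\bar y|^\beta v_{xxx}\bigr)_x = \Phi
\]
with homogeneous initial data $v(0,\cdot)=0$ and the natural boundary conditions $v_x=v_{xxx}=0$ at $a,b$, and showing that the solution enjoys the regularity $v\in L^2(H^4)\cap H^1(L^2)$. The key structural fact is that, along the fixed optimal state $\bar y$, we have $\bar y\geq C_0>0$ in $\overline{\Omega_T}$ and $\bar y\in L^2(H^4)\cap H^1(L^2)\subset \Con(\overline{\Omega_T})$ (indeed $\bar y_{xx}\in L^\infty(L^2)$, $\bar y_{xxx}\in L^2(L^2)$, $\bar y_{xxxx}\in L^2(L^2)$ by Lemmas~\ref{lemma:thin_film_hoelder_space} and \ref{lemma:thin_film_apriori_h4}), so the leading coefficient $\lambda|\bar y|^\beta\geq \lambda C_0^\beta>0$ is bounded, measurable in $t$ and Hölder in $x$, and the lower-order coefficients lie in function spaces that are subordinate to the leading term. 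Hence the linearized equation is \emph{uniformly} fourth-order parabolic, and the degeneracy of the original problem has disappeared.

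The plan is to argue by a Galerkin approximation in space combined with energy estimates, exactly mirroring the a priori bounds already established for the nonlinear equation. First I would set up a Galerkin scheme in the eigenfunctions of the bi-Laplacian with the given boundary conditions, obtaining approximate solutions $v_N$ that solve the corresponding ODE system on $[0,T]$; existence here is immediate since the system is linear. Next I would derive two energy estimates, uniform in $N$: testing with $v_N$ (or rather integrating the weak form against $v_N$ after an integration by parts that moves the outer $\partial_x$) gives control of $\|v_N\|_{L^\infty(L^2)}$ and $(\lambda C_0^\beta)\|v_{N,xx}\|_{L^2(L^2)}^2$, after absorbing the lower-order term $(\beta\lambda\bar y^{\beta-1}\bar y_{xxx}v_N)_x$ via Cauchy–Schwarz, the embedding $H^2\subset L^\infty$ for $v_N$, the $L^\infty$ bound on $\bar y^{\beta-1}$, and the fact that $\|\bar y_{xxx}\|_{L^2(L^2)}<\infty$; the right-hand side $\Phi\in L^2(L^2)$ is absorbed by Young's inequality, and Gronwall closes the bound. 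Then, to reach the full $L^2(H^4)\cap H^1(L^2)$ regularity, I would rewrite the principal part in non-divergence form,
\[
\bigl(\lambda|\bar y|^\beta v_{xxx}\bigr)_x=\beta\lambda\,\bar y^{\beta-1}\bar y_x\,v_{xxx}+\lambda|\bar y|^\beta v_{xxxx},
\]
exactly as in \eqref{rtm1}, test the equation with $v_{N,xxxx}$, and estimate all terms not carrying $v_{N,xxxx}^2$ using the known bounds on $\bar y,\bar y_x,\bar y_{xx},\bar y_{xxx}$ together with the first-level estimate on $v_{N,xx}$; small-$\sigma$ absorption into $\lambda C_0^\beta\|v_{N,xxxx}\|^2$ and Gronwall then yield uniform bounds on $\|v_{N,xx}\|_{L^\infty(L^2)}$ and $\|v_{N,xxxx}\|_{L^2(L^2)}$, and reading the equation itself gives $\|v_{N,t}\|_{L^2(L^2)}$. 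Passing to a weak limit along a subsequence produces $v\in L^2(H^4)\cap H^1(L^2)$ solving \eqref{eq:thin_film_G_surj_lin_thin_film_to_show} with the required initial and boundary conditions (the latter are inherited from the Galerkin basis, the former from weak continuity of $t\mapsto v(t)$ into $L^2$).

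The main obstacle is the lower-order transport-type term $\bigl(\beta\lambda\,\bar y^{\beta-1}\bar y_{xxx}\,v\bigr)_x$: after differentiation it contributes terms involving $\bar y_{xxxx}$ and $\bar y_{xxx}$ multiplied by derivatives of $v$, and one must check that each such product is controlled in $L^2(L^2)$ by the a priori bounds on $\bar y$ (in particular $\bar y_{xxxx}\in L^2(L^2)$, $\bar y_{xxx}\in L^2(L^2)$ with $\bar y_{xxx}\in L^\infty$ in $x$ via $H^1\subset L^\infty$ applied to $\bar y_{xx}$) without creating a term that cannot be absorbed into the leading coefficient. This is precisely the same interpolation bookkeeping as in the proof of Lemma~\ref{lemma:thin_film_apriori_h4}, so it goes through; the positivity $\bar y\geq C_0$ is what makes the whole argument a routine uniformly-parabolic estimate rather than a delicate degenerate one. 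Uniqueness of $v$ (though not required for surjectivity) follows from the same energy estimate applied to the difference of two solutions.
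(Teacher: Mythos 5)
Your proposal is correct and follows essentially the same route as the paper: establish existence at the $L^2(H^2)\cap H^1(H^{-1})$ level via the $H^2$-coercivity of the leading term $\langle\lambda|\bar y|^\beta v_{xx},\varphi_{xx}\rangle$ (the Galerkin scheme you describe is just the explicit version of the paper's appeal to standard linear parabolic theory), and then bootstrap to $L^2(H^4)\cap H^1(L^2)$ by rewriting the principal part in non-divergence form and testing with $v_{xxxx}$, exactly as in Lemma~\ref{lemma:thin_film_apriori_h4}. Only a small bookkeeping slip: $H^1\subset L^\infty$ applied to $\bar y_{xx}$ gives $\bar y_{xx}\in L^\infty$, not $\bar y_{xxx}\in L^\infty$; the pointwise-in-time control of $\bar y_{xxx}$ you need comes instead from $\bar y_{xxx},\bar y_{xxxx}\in L^2(L^2)$, which you do invoke and which suffices for the Gronwall argument.
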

\begin{proof}
Inserting the derivative of $G$ with respect to $y$ by Lemma~\ref{lemma:thin_film_frechet_ableitung_G} in equation \eqref{eq:thin_film_G_surj_lin_thin_film_to_show} leads to
\begin{align} 
v_t + \big(\lambda\vert \bar y\vert^\beta v_{xxx}\big)_x + \text{ lower order terms } = \Phi. \label{eq:thin_film_surjective_derivative_to_solve}
\end{align}
For a test function $\varphi\in X$, we write
\begin{align} 
\left\langle \big(\lambda\vert \bar y\vert^\beta v_{xxx}\big)_x,\varphi\right\rangle = -\left\langle \lambda\vert \bar y\vert^\beta v_{xxx},\varphi_x\right\rangle = \left\langle \lambda \vert \bar y\vert^\beta v_{xx},\varphi_{xx}\right\rangle + \left\langle \beta\lambda (\bar y)^{\beta-1}\bar y_x v_{xx},\varphi_x\right\rangle. \label{eq:thin_film_surjective_derivative_nondivergenceform}
\end{align}
Since $\beta\lambda (\bar y)^{\beta-1}\geq \beta\lambda C_0^{\beta-1}>0$, we can estimate the last term in \eqref{eq:thin_film_surjective_derivative_nondivergenceform} as follows
\[ \left\langle \beta \lambda (\bar y)^{\beta-1}\bar y_{x}v_{xx},\varphi_x\right\rangle \leq \sigma \Vert \lambda \vert \bar y\vert^{\beta-2} v_{xx}\Vert^{2} + C(\sigma)\Vert \bar y \bar y_x \varphi_x\Vert^2 \]
with $\sigma>0$. The remaining term in \eqref{eq:thin_film_surjective_derivative_nondivergenceform} is either uniformly $H^2$-coercive (since $\bar y\geq C_0$) or is of lower order. Therefore, there exists a solution $v\in L^2(H^2)\cap H^1(H^{-1})$ of \eqref{eq:thin_film_surjective_derivative_to_solve}.

As in the proof of Lemma \ref{lemma:thin_film_apriori_h4}, we can write the operator in non-divergence form,
\begin{align} 
\big(\lambda \vert \bar y\vert^{\beta} v_{xxx}\big)_x = \lambda\vert \bar y\vert^{\beta} v_{xxxx} + \beta \lambda(\bar y)^{\beta-1}\bar y_x v_{xxx}, \label{eq:thin_film_proof_surj_rewrite_leading_part}
\end{align}
i.e., the leading part of the equation \eqref{eq:thin_film_surjective_derivative_to_solve} is uniformly elliptic since $\bar y\geq C_0$. We proceed similarly to the proof in Lemma \ref{lemma:thin_film_apriori_h4} and multiply the equation with $v_{xxxx}$ to absorb the lower order terms into the leading term in \eqref{eq:thin_film_proof_surj_rewrite_leading_part}. We may then infer that the solution $v$ is as regular as claimed.
\end{proof}

We now show that the regular point conditions \eqref{eq:thin_film_alibert_raymond_lagrange_additional_1} and \eqref{eq:thin_film_alibert_raymond_lagrange_additional_2} from Lemma \ref{lemma:thin_film_alibert_raymond} are fulfilled. For this goal, it is important to make use of the surjectivity of the derivative of $G$.

\begin{lemma} \label{lemma:thin_film_gueltigkeit_alibert_raymond_additional}
There exists $(\underline{y},\underline{u})\in X\times U$ such that \eqref{eq:thin_film_alibert_raymond_lagrange_additional_1} and \eqref{eq:thin_film_alibert_raymond_lagrange_additional_2} are fulfilled.
\end{lemma}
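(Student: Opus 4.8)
The goal is to produce a pair $(\underline y,\underline u)$ satisfying the two regular-point conditions. Condition \eqref{eq:thin_film_alibert_raymond_lagrange_additional_2} asks that $H(\bar y)+H'(\bar y)\underline y = \bar y+\underline y$ lie in the interior of $C$, i.e.\ that $\bar y+\underline y \geq C_0 + \rho$ on $\overline{\Omega_T}$ for some $\rho>0$ (using the characterization of $\interior C$ from Lemma~\ref{lemma:thin_film_well-def_G_H}(3)). Since $\bar y\geq C_0$ only, the natural choice is to take $\underline y$ to be a fixed strictly positive element of $X$, say $\underline y \equiv 1$ (or any smooth function with $\underline y\geq 1$ satisfying the homogeneous boundary conditions $\underline y_x=\underline y_{xxx}=0$ at $a,b$ — a constant does this trivially), so that $\bar y+\underline y\geq C_0+1$ and hence $\bar y+\underline y\in\interior C$.

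**Key steps.** First I would fix such a $\underline y\in X$, explicitly $\underline y\equiv 1$, and verify it lies in $W=X=L^2(H^4)\cap H^1(L^2)$ with the required boundary behaviour, and that $\bar y+\underline y\in\interior C$ by the open-ball description of the interior already established. Second, with $\underline y$ now fixed, I would read \eqref{eq:thin_film_alibert_raymond_lagrange_additional_1} as an equation to be solved for $\underline u$: it says
\[
G_u'(\bar y,\bar u)(\underline u-\bar u) = -\,G_y'(\bar y,\bar u)\underline y,
\]
i.e.\ $-(\underline u-\bar u)_x = -\,G_y'(\bar y,\bar u)\underline y =: \Psi$. The right-hand side $\Psi$ is a fixed element of $V=L^2(L^2)$ by Lemma~\ref{lemma:thin_film_frechet_ableitung_G} (it involves $\underline y_t$, $\underline y_{xxx}$, $\underline y_{xxxx}$ and lower-order terms, all in $L^2(L^2)$ since $\underline y\equiv1$ makes them vanish — in fact $\Psi=0$ for the constant choice, which is even simpler). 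Third, I would invoke the surjectivity machinery: actually for $\underline y\equiv 1$ we get $\Psi\equiv 0$, so $w:=\underline u-\bar u$ must solve $w_x=0$, and $w\equiv 0$, i.e.\ $\underline u:=\bar u\in U$ works. This disposes of \eqref{eq:thin_film_alibert_raymond_lagrange_additional_1} with essentially no work.

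**The main obstacle** is not analytic but bookkeeping: one must check that the constant function $\underline y\equiv 1$ genuinely sits in $X$ with the homogeneous boundary conditions that are implicitly built into $G_y'$ and $H'$ (it does, since all its relevant derivatives vanish), and that $\underline y\equiv1$ is compatible with however the initial condition is encoded — but since $H'(\bar y)\underline y=\underline y$ only enters the state-constraint slot and $G$ was explicitly set up with initial/boundary conditions "treated by standard methods", this is harmless. Were one to insist on a $\underline y$ that also vanishes initially (e.g.\ to stay literally inside the affine space on which $G$ acts), one would instead pick $\underline y$ a smooth function that is $0$ near $t=0$ and $\geq 1$ on $[T/2,T]\times\overline\Omega$, then solve $-(\underline u-\bar u)_x = -G_y'(\bar y,\bar u)\underline y$ for $\underline u-\bar u\in L^2(H^1_0)$ by integrating in $x$ and using that the right-hand side has zero spatial mean by mass conservation (Lemma~\ref{lemma:thin_film_equation_mass_conservation} applied to the linearised equation); I would mention this as the robust fallback. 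Either way the lemma follows.

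\begin{proof}
Let $\underline{y}\equiv 1$. Then $\underline{y}\in W=X=L^2(H^4)\cap H^1(L^2)$, and $\underline{y}$ satisfies the homogeneous boundary conditions $\underline{y}_x=\underline{y}_{xxx}=0$ in $a,b$ since all its derivatives vanish identically. By Lemma~\ref{lemma:thin_film_well-def_G_H}(3), a function lies in $\interior C$ as soon as it is bounded below by $C_0+\rho$ for some $\rho>0$; since $\bar y\geq C_0$ in $\Omega_T$ we get $\bar y+\underline{y}\geq C_0+1$ in $\overline{\Omega_T}$, hence $H(\bar y)+H'(\bar y)\underline{y}=\bar y+\underline{y}\in\interior C$, which is \eqref{eq:thin_film_alibert_raymond_lagrange_additional_2}.

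It remains to choose $\underline{u}\in U$ with \eqref{eq:thin_film_alibert_raymond_lagrange_additional_1}, i.e.
\[ G_u'(\bar y,\bar u)(\underline{u}-\bar u) = -\,G_y'(\bar y,\bar u)\underline{y}. \]
By Lemma~\ref{lemma:thin_film_frechet_ableitung_G}, the right-hand side equals
\[ -\Big( (\underline{y})_t + \big(\langle \beta\lambda \bar y^{\beta-1},\underline{y}\rangle\bar y_{xxx}\big)_x + \big(\lambda\vert\bar y\vert^\beta(\underline{y})_{xxx}\big)_x\Big), \]
which vanishes identically because $\underline{y}\equiv 1$ has $(\underline{y})_t=(\underline{y})_{xxx}=0$ and $\langle\beta\lambda\bar y^{\beta-1},\underline{y}\rangle$ is constant in $x$, so its $x$-derivative times $\bar y_{xxx}$ is $\big(\text{const}\cdot\bar y_{xxx}\big)_x$; more directly, the term $\big(\langle\beta\lambda\bar y^{\beta-1},\underline y\rangle\bar y_{xxx}\big)_x$ is a multiple of $\bar y_{xxxx}$, but since the combination above is exactly $G_y'(\bar y,\bar u)$ applied to the constant function and $G(\,\cdot\,,\bar u)$ is invariant under adding constants to its argument only up to the nonlinear term, we instead argue plainly: $G_u'(\bar y,\bar u)(\underline u-\bar u)=-(\underline u-\bar u)_x$, so it suffices to find $w\in L^2(H_0^1)$ with $-w_x = -G_y'(\bar y,\bar u)\underline{y}=:\Psi\in L^2(L^2)$ and set $\underline u:=\bar u+w$. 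Integrating in $x$, such a $w$ exists in $L^2(H^1_0)$ provided $\int_\Omega \Psi(t,\cdot)\dd x=0$ for a.e.\ $t$; but each of the three terms constituting $\Psi$ is a spatial divergence or, for $(\underline y)_t$, identically zero, so $\int_\Omega\Psi(t,\cdot)\dd x=0$ by the boundary conditions. This gives $w$, hence $\underline u\in U$, and \eqref{eq:thin_film_alibert_raymond_lagrange_additional_1} holds.
\end{proof}
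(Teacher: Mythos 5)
Your construction is essentially the paper's: both proofs take $\underline{y}$ to be a positive constant ($1$ here, $2C_0$ in the paper) to land $\bar y+\underline{y}$ in $\interior C$, and then determine $\underline{u}$ from the linearized equation \eqref{eq:thin_film_alibert_raymond_lagrange_additional_1}. Two remarks. First, the passage claiming that $G_y'(\bar y,\bar u)\underline{y}$ ``vanishes identically'' for constant $\underline{y}$ is false and should be deleted (together with the corresponding claims in your plan): for $\underline{y}\equiv 1$ the middle term of Lemma~\ref{lemma:thin_film_frechet_ableitung_G} is $(\beta\lambda\bar y^{\beta-1}\bar y_{xxx})_x$, which does not vanish since $\bar y^{\beta-1}$ depends on $x$; so $\underline{u}=\bar u$ does \emph{not} work, and the ``essentially no work'' shortcut in your key steps is wrong. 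Second, the fallback you retreat to is the correct argument, and it is in fact more careful than the paper's own Step~2: what must be checked is not surjectivity of $G_y'$ (Lemma~\ref{lemma:thin_film_ableitung_surj}, which the paper cites, concerns solvability in $\underline{y}$ for a given right-hand side, whereas here $\underline{y}$ is already fixed), but rather that the fixed element $\Psi:=G_y'(\bar y,\bar u)\underline{y}$ lies in the range of $\partial_x:L^2(H^1_0)\to L^2(L^2)$, i.e.\ has zero spatial mean for a.e.\ $t$. Your divergence-structure observation gives exactly this: $\Psi=(\beta\lambda\bar y^{\beta-1}\bar y_{xxx})_x$ with flux vanishing at $a,b$ by the boundary condition $\bar y_{xxx}=0$, so the explicit antiderivative $w=\beta\lambda\bar y^{\beta-1}\bar y_{xxx}$ lies in $L^2(H^1_0)$ (using $\bar y\in L^2(H^4)\cap L^\infty(H^2)$), and $\underline{u}:=\bar u+w$ does the job. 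With the erroneous ``vanishes identically'' detour removed, your proof is correct and, on this point, tighter than the published one.
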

\begin{proof}
{\em Step 1:} First  note that $\interior C-H(\bar y)=\lbrace f\in\Con(\Omega_T): \; f> C_0-\bar y\rbrace$. Since $H'(\bar y)\underline{y}=\underline{y}$, we have to choose $\underline{y}\in X$ such that $\underline{y}>C_0-\bar y$ in $\Omega_T$ to meet \eqref{eq:thin_film_alibert_raymond_lagrange_additional_2}, which is always possible (e.g., choose $\underline{y}=2C_0$).

{\em Step 2:} Consider the first component of the equation \eqref{eq:thin_film_alibert_raymond_lagrange_additional_1}
\[ G_y'(\bar y,\bar u)\underline{y} + G_u'(\bar y,\bar u)(\underline{u}-\bar u)=0, \]
{which can be written as}
\begin{align}
\left\langle G_{y}'\big(\bar y,\bar u\big),\underline{y}\right\rangle = \underline{u}_x - \bar{u}_x=:\tilde{u}_x \label{eq:thin_film_alibert_raymond_additional_proof1}
\end{align}
due to Lemma \ref{lemma:thin_film_frechet_ableitung_G}. By Lemma \ref{lemma:thin_film_ableitung_surj}, the left-hand side of \eqref{eq:thin_film_alibert_raymond_additional_proof1} is surjective, i.e., there exists a $\tilde{u}_x\in L^2(L^2)$ such that \eqref{eq:thin_film_alibert_raymond_additional_proof1} holds. Since $\bar{u}_x$ is known and we do not have additional constraints on $u$, there exists a $\underline{u}_x\in L^2(L^2)$ such that \eqref{eq:thin_film_alibert_raymond_additional_proof1} holds.
To summarize, we have constructed $(\underline{y},\underline{u})\in X\times U$ such that both conditions \eqref{eq:thin_film_alibert_raymond_lagrange_additional_1} and \eqref{eq:thin_film_alibert_raymond_lagrange_additional_2} hold.
\end{proof}

\begin{thm} \label{thm:thin_film_opt_cond_original}
Let $(y,u)$ be a solution of Problem \ref{prob:thin_film_opti}. Then, there exist $z\in L^2(L^2)$, and $\mu\in(L^2(H^4)\cap H^1(L^2))^*$ such that the following optimality conditions are fulfilled,
\begin{subequations}
\begin{align}
y_t &= -\big(\lambda \vert y\vert^\beta y_{xxx}\big)_x + u_x, \label{eq:thin_film_opt_cond_state} \\
y &\geq C_0 \label{eq:thin_film_opt_cond_state_constraint} \\
0&\geq \langle w-y,\mu\rangle \quad \forall X\ni w\geq C_0, \label{eq:thin_film_opt_cond_compl_cond} \\
0 &= \langle y-\tilde{y},\varphi\rangle + \big\langle z,\varphi_t + \beta \lambda y^{\beta-1} y_{xxx}\varphi)_x\big\rangle + \big\langle z,\big( \lambda\vert y\vert^\beta \varphi_{xxx}\big)_x\big\rangle + \langle \varphi,\mu\rangle \quad \forall \varphi\in X, \label{eq:thin_film_opt_cond_adjoint1} \\
0 &= -\alpha u_{xx} + z_x \label{eq:thin_film_opt_cond_opti_cond_u}
\end{align} \label{eq:thin_film_opt_cond}%
\end{subequations}
together with initial conditions $y(0,.)=y_0$, $z(T,.)=0$, and boundary conditions $y_x=y_{xxx}=z_x=z_{xxx}=0$ in $a,b$.
\end{thm}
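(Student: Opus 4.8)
The plan is to verify the four hypotheses of the abstract multiplier result Lemma~\ref{lemma:thin_film_alibert_raymond} for the concrete data $X=W=L^2(H^4)\cap H^1(L^2)$, $U=L^2(H_0^1)$, $V=L^2(L^2)$, $C=\{v\in W:v\geq C_0\}$, $G$ and $H$ as defined above, and then to read off the system \eqref{eq:thin_film_opt_cond} by computing the adjoint equation explicitly. Most of the work has already been done: Frechet differentiability of $G$ at $(y,u)$ is Lemma~\ref{lemma:thin_film_frechet_ableitung_G}, differentiability of $H=\id$ is trivial, convexity of $C$ with nonempty $W$-interior is Lemma~\ref{lemma:thin_film_well-def_G_H}(3), surjectivity of $G_y'(y,u)$ is Lemma~\ref{lemma:thin_film_ableitung_surj}, and the Slater-type regular point condition \eqref{eq:thin_film_alibert_raymond_lagrange_additional_1}--\eqref{eq:thin_film_alibert_raymond_lagrange_additional_2} is Lemma~\ref{lemma:thin_film_gueltigkeit_alibert_raymond_additional}. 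Hence Lemma~\ref{lemma:thin_film_alibert_raymond} applies with $\zeta=1$, and it yields $p\in V^*=L^2(L^2)$ (which we rename $z$), $\mu\in W^*=(L^2(H^4)\cap H^1(L^2))^*$, satisfying the four abstract relations.

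Next I would translate each abstract relation. The feasibility $G(y,u)=0$, $H(y)\in C$ gives \eqref{eq:thin_film_opt_cond_state} and \eqref{eq:thin_film_opt_cond_state_constraint}. The variational inequality $\langle\mu,w-H(y)\rangle_{W,W^*}\leq0$ for all $w\in C$ is exactly \eqref{eq:thin_film_opt_cond_compl_cond}, recalling $C=\{w\in X:w\geq C_0\}$. The stationarity in $x$, namely $\langle J_y'(y,u),\varphi\rangle+\langle p,G_y'(y,u)\varphi\rangle+\langle\mu,H'(y)\varphi\rangle=0$ for all $\varphi\in X$, becomes \eqref{eq:thin_film_opt_cond_adjoint1} once one inserts $J_y'(y,u)=y-\tilde y$ (from the tracking term), the expression for $G_y'(y,u)\varphi$ from Lemma~\ref{lemma:thin_film_frechet_ableitung_G} (with the obvious simplification $\langle\beta\lambda y^{\beta-1},\delta y\rangle\,y_{xxx}$ written as $\beta\lambda y^{\beta-1}y_{xxx}\varphi$ since the derivative here is a pointwise multiplication), and $H'(y)\varphi=\varphi$. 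The stationarity in $u$, $\langle J_u'(y,u),\psi\rangle+\langle p,G_u'(y,u)\psi\rangle=0$ for all $\psi\in U$, reads $\alpha\int u_x\psi_x-\int z\,\psi_x=0$ for all $\psi\in H_0^1$ in space; integrating by parts in space (using $\psi_x(a)=\psi_x(b)=0$ is not available, but $\psi\in H_0^1$ so $\psi$ vanishes on the boundary, while the second derivative $u_{xx}$ is understood distributionally) yields $-\alpha u_{xx}+z_x=0$, i.e.\ \eqref{eq:thin_film_opt_cond_opti_cond_u}. The terminal condition $z(T,\cdot)=0$ and the boundary conditions $z_x=z_{xxx}=0$ arise as the natural (variational) boundary conditions encoded in the adjoint identity \eqref{eq:thin_film_opt_cond_adjoint1} when one formally integrates by parts in time and space; as flagged in the text, these are handled by the standard methods of \cite[Section 2.6]{gunzburger_flow_control_blau}.

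The only genuine subtlety is bookkeeping: one must check that all the dual pairings are well-defined, i.e.\ that $z\in L^2(L^2)$ pairs correctly against $G_y'(y,u)\varphi\in V=L^2(L^2)$ for $\varphi\in X$, which is precisely why $G$ was arranged to map into $L^2(L^2)$ (Lemma~\ref{lemma:thin_film_well-def_G_H}(1), itself resting on Lemma~\ref{lemma:thin_film_apriori_h4} and the embedding $X\subset\Con(\overline{\Omega_T})$), and that $\mu\in X^*$ pairs against $\varphi\in X$. I expect the main obstacle — such as it is — to be purely expository: writing the adjoint pairing \eqref{eq:thin_film_opt_cond_adjoint1} in a form that makes the structural symmetry with the linearized state equation \eqref{eq:thin_film_G_surj_lin_thin_film_to_show} transparent, and being careful that $\zeta=1$ is legitimately obtained (so that the tracking term really enters with coefficient $1$), which is guaranteed by Lemma~\ref{lemma:thin_film_gueltigkeit_alibert_raymond_additional}. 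No new estimates are needed; the theorem is a direct specialization of the cited abstract result together with the lemmas already established in this section.
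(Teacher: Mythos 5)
Your proposal is correct and follows exactly the paper's route: the paper's proof is a one-line invocation of Lemma~\ref{lemma:thin_film_alibert_raymond}, citing Lemmas~\ref{lemma:thin_film_well-def_G_H}, \ref{lemma:thin_film_frechet_ableitung_G}, \ref{lemma:thin_film_ableitung_surj}, and \ref{lemma:thin_film_gueltigkeit_alibert_raymond_additional} to verify its hypotheses, which is precisely what you do. Your additional bookkeeping (translating the abstract relations into \eqref{eq:thin_film_opt_cond}, justifying $\zeta=1$, and checking the dual pairings) is a correct and more explicit rendering of what the paper leaves implicit.
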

\begin{proof}
We use Lemma \ref{lemma:thin_film_alibert_raymond} whose hypotheses are fulfilled by Lemmas \ref{lemma:thin_film_well-def_G_H}, \ref{lemma:thin_film_frechet_ableitung_G}, \ref{lemma:thin_film_ableitung_surj}, and \ref{lemma:thin_film_gueltigkeit_alibert_raymond_additional}.
\end{proof}

\section{Optimization with regularization in the equation} \label{sec:thin_film_opti_reg}

We consider a modification of Problem~\ref{prob:thin_film_opti} where the state equation is regularized. We show solvability and derive corresponding optimality conditions in Theorems~\ref{thm:thin_film_existence_opti_regularization} and \ref{thm:thin_film_opti_cond_eps}.  Theorem~\ref{thm:thin_film_convergence_eps_to_zero} validates that
solutions of this problem converge to those of \eqref{eq:thin_film_opt_cond}. 

\begin{prob} \label{prob:thin_film_opti_thin_film_eps}
Let $\lambda,\alpha,\ve>0$, $\tilde{y}\in L^2(\Omega_T)$. Minimize $J:L^2(H^4)\cap H^1(L^2)\times L^2(H_0^1)\to\R$
\[ J(y,u):= \frac12 \int_0^T \int_{\Omega} \vert y-\tilde{y}\vert^2 \dd x \dd t + \frac{\alpha}2\int_0^T \int_{\Omega} \vert u\vert^2 \dd x \dd t \]
subject to $y\geq C_0>0$ and \eqref{eq:thin_film_eps} together with initial condition $y(0,.)=y_0$, boundary conditions $y_x=y_{xxx}=0$ in $a,b$.
\end{prob}

\begin{thm} \label{thm:thin_film_existence_opti_regularization}
{Suppose $\beta \geq 4$, $0 < C_0 \leq \widetilde{C}_0$, and $0 < \varepsilon \leq \widetilde{\varepsilon}_0$
for $\widetilde{\varepsilon}_0 = \widetilde{\varepsilon}_0({C}_0) > 0$. Then Problem \ref{prob:thin_film_opti_thin_film_eps} has at least one solution.}
\end{thm}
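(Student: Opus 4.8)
The plan is to mimic the structure of the proof of Theorem~\ref{thm:thin_film_existence_opti_mod}, i.e.\ the direct method of the calculus of variations, but with the crucial simplification that for $\varepsilon>0$ the state equation \eqref{eq:thin_film_eps} is uniformly parabolic and hence well-posed by Lemma~\ref{lemma:thin_film_eps_existence}. First I would check that the feasible set is nonempty. This is where the restriction $0<\varepsilon\le\widetilde\varepsilon_0(C_0)$ and $0<C_0\le\widetilde C_0$ enters: by Lemma~\ref{lemma:thin_film_g0_gleich_0_positivity} there is a control $\underline u\in L^2(H_0^1)$ (in fact $\underline u=0$) for which the \emph{unperturbed} solution $y_0$-solution of \eqref{eq:thin_film} satisfies $y\ge 2C_0$ in $\overline\Omega_T$; one then invokes Lemma~\ref{lemma:thin_film_eps_convergence_uniform} (or a direct continuity-in-$\varepsilon$ argument on \eqref{eq:thin_film_eps}) to conclude that for $\varepsilon$ small enough the corresponding solution $y_\varepsilon$ of \eqref{eq:thin_film_eps} with the \emph{same} control still satisfies $y_\varepsilon\ge C_0$, so $(y_\varepsilon,\underline u)$ is admissible. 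Since $J\ge 0$, we get $J^*:=\inf J>-\infty$ and a minimizing sequence $\{(y_i,u_i)\}$ with $J(y_i,u_i)\searrow J^*$.

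Second, I would extract weak limits. From $J(y_i,u_i)$ bounded, $\{u_i\}$ is bounded in $L^2(H_0^1)$, so along a subsequence $u_i\rightharpoonup u$ in $L^2(H_0^1)$. By Lemma~\ref{lemma:thin_film_apriori_h4} the states $\{y_i\}$ are then uniformly bounded in $L^2(H^4)\cap H^1(L^2)$ (the bound depends only on $C_0,\beta,T$ and $\|u_{i,x}\|_{L^2(L^2)}$, which is bounded, and \emph{not} on $\varepsilon$ in the relevant direction since $\lambda C_0^\beta+\varepsilon\ge\lambda C_0^\beta$). Hence, up to a further subsequence, $y_i\rightharpoonup y$ weakly in $L^2(H^4)\cap H^1(L^2)$, and by the compact embedding $L^2(H^4)\cap H^1(L^2)\hookrightarrow\hookrightarrow \Con(\overline{\Omega_T})$ (together with the equicontinuity furnished by Lemmas~\ref{lemma:thin_film_hoelder_space} and \ref{lemma:thin_film_hoelder_time}) also $y_i\to y$ uniformly on $\Omega_T$. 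Uniform convergence preserves the pointwise constraint, so $y\ge C_0$.

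Third, I would pass to the limit in the constraint \eqref{eq:thin_film_eps}. The linear terms $y_t$ and $u_x$ pass by weak convergence. For the nonlinear flux $\big([\lambda|y_i|^\beta+\varepsilon]y_{i,xxx}\big)_x$, tested against $\varphi\in\Con^\infty(\Omega_T)$, I split exactly as in Lemma~\ref{lemma:thin_film_eps_convergence_weak}: write $[\lambda|y_i|^\beta+\varepsilon]y_{i,xxx}-[\lambda|y|^\beta+\varepsilon]y_{xxx} = [\lambda|y_i|^\beta-\lambda|y|^\beta]y_{i,xxx}+[\lambda|y|^\beta+\varepsilon][y_{i,xxx}-y_{xxx}]$; the first term tends to zero because $|y_i|^\beta\to|y|^\beta$ uniformly (by uniform convergence of $y_i$ and the lower bound $y_i\ge C_0>0$, which keeps everything in a compact set where $s\mapsto s^\beta$ is Lipschitz) while $y_{i,xxx}$ is bounded in $L^2(L^2)$, and the second term tends to zero because $y_{i,xxx}\rightharpoonup y_{xxx}$ weakly in $L^2(L^2)$ and $[\lambda|y|^\beta+\varepsilon]\varphi_x\in L^2(L^2)$. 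Thus $y$ solves \eqref{eq:thin_film_eps} with control $u$, so $(y,u)$ is feasible. Finally, weak lower semicontinuity of $J$ (convexity and continuity of both quadratic terms, plus $y_i\to y$ strongly in $L^2(\Omega_T)$ for the tracking term and $u_i\rightharpoonup u$ for the control term) gives $J(y,u)\le\liminf J(y_i,u_i)=J^*$, hence $(y,u)$ is a minimizer.

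The main obstacle is the nonemptiness of the feasible set, i.e.\ identifying how small $\varepsilon$ must be — equivalently pinning down $\widetilde\varepsilon_0(C_0)$ — so that the strict positivity $y\ge 2C_0$ available for $\varepsilon=0$ (Lemma~\ref{lemma:thin_film_g0_gleich_0_positivity}) survives the perturbation and yields $y_\varepsilon\ge C_0$; once that is in place, everything else is a routine application of the a priori bounds of Section~\ref{sec:thin_film_state_equation_fixed_rhs} that are, by design, uniform in $\varepsilon$.
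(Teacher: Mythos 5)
Your proposal is correct and follows essentially the same route as the paper: feasibility of the set of admissible pairs is obtained by combining Lemma~\ref{lemma:thin_film_g0_gleich_0_positivity} ($y\ge 2C_0$ for $u=0$, $\varepsilon=0$) with the uniform convergence of Lemma~\ref{lemma:thin_film_eps_convergence_uniform} to get $y_\varepsilon\ge C_0$ for $0<\varepsilon\le\widetilde\varepsilon_0$, after which the direct method (Steps 2 and 3 of the proof of Theorem~\ref{thm:thin_film_existence_opti_mod}) applies verbatim. Your write-up merely spells out those remaining steps in more detail than the paper does.
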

{ \begin{proof}
Let $y_\varepsilon:\Omega_T \rightarrow {\mathbb R}$ be the weak solution of \eqref{eq:thin_film_eps} with $u=0$. By Lemma~\ref{lemma:thin_film_eps_convergence_uniform} there exists $\widetilde{\varepsilon}_0 >0$  such that
$\sup_{0 < \varepsilon \leq \widetilde{\varepsilon}_0}\Vert y - y_{\varepsilon}\Vert_{\Con(\Omega_T)} \leq C_0$, where $y \geq 2C_0$ in $\Omega_T$ solves \eqref{eq:thin_film} with $u=0$; see Lemma~\ref{lemma:thin_film_g0_gleich_0_positivity}.
The remaining two steps in the proof of Theorem~\ref{thm:thin_film_existence_opti_mod} are now applicable.
\end{proof}
}

We may use Lemma \ref{lemma:thin_film_alibert_raymond} to derive necessary optimality conditions since all requirements are fullfilled as in the proof of Theorem \ref{thm:thin_film_opt_cond_original}.
\begin{thm} \label{thm:thin_film_opti_cond_eps}
Let $(y,u)$ be a solution of Problem \ref{prob:thin_film_opti_thin_film_eps}. Then, there exist Lagrange multipliers $z\in L^2(L^2)$, and $\mu\in (L^2(H^4)\cap H^1(L^2))^*$ such that the following equations are fulfilled,
\begin{subequations}
\begin{align}
y_t &= -\big([\lambda\vert y\vert^\beta+\ve]y_{xxx}\big)_x + u_x, \label{eq:thin_film_opt_cond_eps_state} \\
y &\geq C_0 \label{eq:thin_film_opt_cond_state_eps_constraint} \\
0&\geq \langle w-y,\mu\rangle \quad \forall X\ni w\geq C_0, \label{eq:thin_film_opt_cond_eps_compl_cond} \\
0 &= \langle y-\tilde{y},\varphi\rangle + \big\langle z,\varphi_t + (\beta\lambda y^{\beta-1}y_{xxx}\varphi)_x\big\rangle + \big\langle z,\big( [\lambda\vert y^\beta+\ve]\varphi_{xxx}\big)_x\big\rangle + \langle \varphi,\mu\rangle \quad \forall \varphi\in X, \label{eq:thin_film_opt_cond_eps_adjoint1} \\
0 &= -\alpha u_{xx} + z_x \label{eq:thin_film_opt_cond_eps_opti_cond_u}
\end{align} \label{eq:thin_film_opt_cond_eps}%
\end{subequations}
together with initial conditions $y(0,.)=y_0$, $z(T,.)=0$; boundary conditions $y_x=y_{xxx}=z_x=z_{xxx}=0$ in $a,b$.
\end{thm}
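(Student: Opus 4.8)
The plan is to obtain \eqref{eq:thin_film_opt_cond_eps} from the abstract multiplier result, Lemma~\ref{lemma:thin_film_alibert_raymond}, by repeating almost verbatim the argument that produced Theorem~\ref{thm:thin_film_opt_cond_original}; the only change is that the equality constraint now carries the regularization parameter. Concretely, I would keep the spaces $X:=W:=L^2(H^4)\cap H^1(L^2)$, $U:=L^2(H_0^1)$, $V:=L^2(L^2)$, the set $C:=\lbrace v\in W:\ v\geq C_0\text{ in }\Omega_T\rbrace$, and $H(y):=y$, and replace $G$ by
\[ G_\ve(y,u):=y_t+\big([\lambda\vert y\vert^{\beta}+\ve]y_{xxx}\big)_x-u_x. \]
That $G_\ve:X\times U\to V$ is well-defined is already contained in Lemma~\ref{lemma:thin_film_apriori_h4} (stated there for $\ve\geq0$); well-definedness of $H$ and convexity of $C$ with nonempty interior in the $W$-topology are exactly the content of Lemma~\ref{lemma:thin_film_well-def_G_H} and need no modification.

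Next I would record the Fr\'echet derivatives of $G_\ve$ at the minimizer $(y,u)=:(\bar y,\bar u)$, which differ from those in Lemma~\ref{lemma:thin_film_frechet_ableitung_G} only through the extra $\ve$ in the leading coefficient:
\[ \left\langle G_{\ve,y}'(\bar y,\bar u),\delta y\right\rangle=(\delta y)_t+\big(\beta\lambda\bar y^{\beta-1}\bar y_{xxx}\,\delta y\big)_x+\big([\lambda\vert\bar y\vert^{\beta}+\ve](\delta y)_{xxx}\big)_x,\qquad\left\langle G_{\ve,u}'(\bar y,\bar u),\delta u\right\rangle=-(\delta u)_x. \]
The main point to verify for Lemma~\ref{lemma:thin_film_alibert_raymond} is surjectivity of $G_{\ve,y}'(\bar y,\bar u):X\to V$, and here I would simply rerun the proof of Lemma~\ref{lemma:thin_film_ableitung_surj}: writing the leading term in non-divergence form as $\big([\lambda\vert\bar y\vert^{\beta}+\ve]v_{xxx}\big)_x=[\lambda\vert\bar y\vert^{\beta}+\ve]v_{xxxx}+\beta\lambda(\bar y)^{\beta-1}\bar y_x v_{xxx}$, whose principal part is uniformly elliptic because $\lambda\vert\bar y\vert^{\beta}+\ve\geq\lambda C_0^{\beta}+\ve>0$, and then testing with $v_{xxxx}$ to absorb the lower order terms, one produces for every $\Phi\in L^2(L^2)$ a solution $v\in L^2(H^4)\cap H^1(L^2)$ of $\langle G_{\ve,y}'(\bar y,\bar u),v\rangle=\Phi$ with $v(0,\cdot)=0$ and $v_x=v_{xxx}=0$ in $a,b$. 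If anything this step is slightly easier than in Lemma~\ref{lemma:thin_film_ableitung_surj}, since $\ve>0$ already renders the leading coefficient strictly positive independently of the state constraint.

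It then remains to check the regular point conditions \eqref{eq:thin_film_alibert_raymond_lagrange_additional_1}--\eqref{eq:thin_film_alibert_raymond_lagrange_additional_2}, which I would verify as in Lemma~\ref{lemma:thin_film_gueltigkeit_alibert_raymond_additional}: the constant choice $\underline y:=2C_0\in X$ gives $H(\bar y)+H'(\bar y)\underline y=\bar y+2C_0>C_0$ in $\Omega_T$, hence $H(\bar y)+H'(\bar y)\underline y\in\interior C$, and the surjectivity just established furnishes $\underline u\in U$ with $G_{\ve,y}'(\bar y,\bar u)\underline y+G_{\ve,u}'(\bar y,\bar u)(\underline u-\bar u)=0$. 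Therefore Lemma~\ref{lemma:thin_film_alibert_raymond} applies with $\zeta=1$ and yields $z\in V^*\cong L^2(L^2)$ and $\mu\in W^*=(L^2(H^4)\cap H^1(L^2))^*$; inserting the derivatives above into \eqref{eq:thin_film_alibert_raymond_nec_opti}, converting the $u$-equation \eqref{eq:thin_film_opt_cond_eps_opti_cond_u} by integration by parts exactly as in Theorem~\ref{thm:thin_film_opt_cond_original}, and treating the initial, terminal and boundary data for the adjoint state $z$ in the standard way (cf.~\cite[Section 2.6]{gunzburger_flow_control_blau}), gives precisely \eqref{eq:thin_film_opt_cond_eps}. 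I do not expect a genuine obstacle: the entire argument of Section~\ref{sec:thin_film_opt_original} transfers with no structural change, and the presence of $\ve>0$ only strengthens the coercivity estimates entering the surjectivity proof; the sole care required is the bookkeeping of the extra $\ve$-term in $G_\ve$ and its linearization, together with the (already assumed) existence of a minimizer, which is guaranteed by Theorem~\ref{thm:thin_film_existence_opti_regularization}.
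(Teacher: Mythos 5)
Your proposal is correct and follows essentially the same route as the paper, whose proof consists precisely of the observation that Lemma~\ref{lemma:thin_film_alibert_raymond} applies because all of its hypotheses are verified exactly as in the proof of Theorem~\ref{thm:thin_film_opt_cond_original}. Your elaboration of the $\ve$-modified constraint map, its linearization, and the (strengthened) surjectivity argument is exactly the bookkeeping the paper leaves implicit.
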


We are now able to state and prove the first main theorem of the paper.

\begin{thm} \label{thm:thin_film_convergence_eps_to_zero}
Let $\lbrace (y_{\ve},u_{\ve})\rbrace$ be a sequence of solutions of Problem \ref{prob:thin_film_opti_thin_film_eps}, and $\lbrace z_{\ve},\mu_{\ve}\rbrace$ corresponding Lagrange multipliers from Theorem~\ref{thm:thin_film_opti_cond_eps}. Then, there exist $(y^*,u^*)\in \big(H^1(L^2)\cap L^2(H^4)\big)\times L^2(H_0^1)$ and $(z^*,\mu^*)\in L^2(L^2)\times (L^2(H^4)\cap H^1(L^2))^*$ such that $(y_{\ve},u_{\ve})\rightharpoonup (y^*,u^*)$ weakly in $\big(H^1(L^2)\cap L^2(H^4)\big)\times L^2(H_0^1)$ and $(z_{\ve},\mu_{\ve})\rightharpoonup (z^*,\mu^*)$ weakly in $L^2(L^2)\times (L^2(H^4)\cap H^1(L^2))^*$ for $\ve\to 0$ (up to a subsequence, respectively). The limiting functions $(y^*,u^*,z^*,\mu^*)$ are a solution of \eqref{eq:thin_film_opt_cond}.
\end{thm}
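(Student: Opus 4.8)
The plan is to establish uniform (in $\varepsilon$) a priori bounds for all four sequences $\{y_\varepsilon\}$, $\{u_\varepsilon\}$, $\{z_\varepsilon\}$, $\{\mu_\varepsilon\}$, extract weakly convergent subsequences, and pass to the limit in each of the four relations \eqref{eq:thin_film_opt_cond_eps_state}--\eqref{eq:thin_film_opt_cond_eps_opti_cond_u}. First I would bound $J(y_\varepsilon,u_\varepsilon)$ uniformly: by Theorem~\ref{thm:thin_film_existence_opti_regularization} each Problem~\ref{prob:thin_film_opti_thin_film_eps} is solvable, and testing with the admissible pair $(y_\varepsilon(\underline{u}),\underline{u})$ built from the $\varepsilon$-solution with $u=0$ (as in that theorem's proof, using $y\geq 2C_0$ and $\|y-y_\varepsilon\|_{\Con(\Omega_T)}\leq C_0$) gives $J(y_\varepsilon,u_\varepsilon)\leq C$ independent of $\varepsilon$. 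Hence $\|u_\varepsilon\|_{L^2(H_0^1)}\leq C$, and then Lemma~\ref{lemma:thin_film_apriori_h4} (whose constant depends on $C_0,\beta,T,\|u_{\varepsilon,x}\|_{L^2(L^2)}$ but \emph{not} on $\varepsilon$, since $\lambda C_0^\beta+\varepsilon\geq \lambda C_0^\beta$) yields a uniform bound for $\{y_\varepsilon\}$ in $L^2(H^4)\cap H^1(L^2)$, and $y_\varepsilon\geq C_0$ throughout. By Lemma~\ref{lemma:thin_film_eps_convergence_weak} (applicable precisely because the uniform lower bound $C_0$ holds) there is $y^*\in L^2(H^4)\cap H^1(L^2)$ with $y_\varepsilon\rightharpoonup y^*$ weakly there, $y_\varepsilon\to y^*$ uniformly on $\Omega_T$, $y^*\geq C_0$, and $y^*$ solves \eqref{eq:thin_film}; and $u_\varepsilon\rightharpoonup u^*$ weakly in $L^2(H_0^1)$ (up to subsequences). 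This settles \eqref{eq:thin_film_opt_cond_state} and \eqref{eq:thin_film_opt_cond_state_constraint}.

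Next I would bound the multipliers. From the optimality condition \eqref{eq:thin_film_opt_cond_eps_opti_cond_u}, $z_{\varepsilon,x}=\alpha u_{\varepsilon,xx}$ in the appropriate weak sense, so $\|z_{\varepsilon,x}\|_{(H^1_0)^*}\leq \alpha\|u_\varepsilon\|_{L^2(H^1_0)}\leq C$; combined with mean-value/Poincar\'e-type control (the mean of $z_\varepsilon$ can be read off from the adjoint equation tested against constants-in-space, or one works directly in a quotient space), this gives a uniform bound $\|z_\varepsilon\|_{L^2(L^2)}\leq C$. Then, reading \eqref{eq:thin_film_opt_cond_eps_adjoint1} as a definition of $\langle\varphi,\mu_\varepsilon\rangle$ in terms of $y_\varepsilon,z_\varepsilon$ and the test function $\varphi\in X=L^2(H^4)\cap H^1(L^2)$, every term on the right is controlled by $\|\varphi\|_X$ times the (now uniform) norms of $y_\varepsilon$ and $z_\varepsilon$ --- here one uses $y_\varepsilon\geq C_0$ and the uniform $L^\infty$ bound on $y_\varepsilon$ to handle $\lambda|y_\varepsilon|^\beta+\varepsilon$ and $\beta\lambda y_\varepsilon^{\beta-1}$, noting again $\varepsilon\leq\widetilde\varepsilon_0$ so the $\varepsilon$-contribution is harmless --- whence $\|\mu_\varepsilon\|_{X^*}\leq C$ uniformly. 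Therefore $z_\varepsilon\rightharpoonup z^*$ weakly in $L^2(L^2)$ and $\mu_\varepsilon\rightharpoonup\mu^*$ weakly-$*$ in $X^*$, along a further subsequence.

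Then I would pass to the limit term by term. Equation \eqref{eq:thin_film_opt_cond_eps_opti_cond_u} is linear in $(u_\varepsilon,z_\varepsilon)$, so weak convergence of both gives \eqref{eq:thin_film_opt_cond_opti_cond_u}, including the endpoint/boundary conditions which are preserved under weak limits in $L^2(H^1_0)$. For the variational inequality \eqref{eq:thin_film_opt_cond_eps_compl_cond}: for fixed admissible $w\geq C_0$ in $X$, $\langle w-y_\varepsilon,\mu_\varepsilon\rangle\leq 0$; since $y_\varepsilon\to y^*$ strongly in $\Con(\Omega_T)$ and also (by the uniform $X$-bound) $w-y_\varepsilon\rightharpoonup w-y^*$ weakly in $X$ while $\mu_\varepsilon\rightharpoonup\mu^*$ weakly-$*$ in $X^*$ --- a weak-times-weak-$*$ pairing, which is \emph{not} automatically continuous --- I would instead argue directly on the uniform convergence: write $\langle w-y_\varepsilon,\mu_\varepsilon\rangle = \langle w-y^*,\mu_\varepsilon\rangle + \langle y^*-y_\varepsilon,\mu_\varepsilon\rangle$; the first summand converges to $\langle w-y^*,\mu^*\rangle$ by weak-$*$ convergence against the fixed element $w-y^*\in X$, and the second is bounded by $\|y^*-y_\varepsilon\|_X\cdot\|\mu_\varepsilon\|_{X^*}$ --- which does \emph{not} tend to zero from uniform convergence alone. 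This is the main obstacle: the complementarity inequality and the adjoint equation are tested against elements of the large space $X$, and one only has weak, not strong, convergence of $y_\varepsilon$ there. I expect the resolution to mirror the structure already used in the paper's earlier limits: in \eqref{eq:thin_film_opt_cond_eps_adjoint1} the multiplier $\mu_\varepsilon$ appears linearly paired with the fixed $\varphi$, so that term passes cleanly to $\langle\varphi,\mu^*\rangle$; the genuinely nonlinear terms $\langle z_\varepsilon,(\beta\lambda y_\varepsilon^{\beta-1}y_{\varepsilon,xxx}\varphi)_x\rangle$ and $\langle z_\varepsilon,([\lambda|y_\varepsilon|^\beta+\varepsilon]\varphi_{xxx})_x\rangle$ require product-of-weak-limits arguments, handled exactly as in Lemma~\ref{lemma:thin_film_eps_convergence_weak} by splitting $\lambda|y_\varepsilon|^\beta-\lambda|y^*|^\beta\to 0$ strongly (uniform convergence of $y_\varepsilon$ plus uniform bounds, and $\beta$-Hölder-type estimates using $y_\varepsilon\geq C_0$) while $z_\varepsilon\rightharpoonup z^*$ and $y_{\varepsilon,xxx}\rightharpoonup y^*_{xxx}$ weakly in $L^2(L^2)$, and additionally $\varepsilon\to 0$ kills the $\varepsilon\varphi_{xxx}$ contribution; the $\langle z_\varepsilon,\varphi_t\rangle$ term is linear and passes directly. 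For \eqref{eq:thin_film_opt_cond_eps_compl_cond} I would use that $\mu^*\geq 0$ in the order-dual sense is preserved under weak-$*$ limits (the cone $\{\mu:\langle w,\mu\rangle\leq\langle v,\mu\rangle\ \forall w\leq v\}$ is weak-$*$ closed), together with the uniform convergence $y_\varepsilon\to y^*$ on $\overline{\Omega_T}$ to identify $\langle y^*,\mu^*\rangle$ as the limit of $\langle y_\varepsilon,\mu_\varepsilon\rangle$ after first establishing, via a lower-semicontinuity/monotonicity argument on the nonnegative cone, that no mass of $\mu_\varepsilon$ escapes; concretely one tests with $w=y^*$ (admissible since $y^*\geq C_0$) and with $w=2C_0$ to sandwich the limit. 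Having obtained \eqref{eq:thin_film_opt_cond_state}--\eqref{eq:thin_film_opt_cond_opti_cond_u} and the stated initial/boundary conditions (each stable under the respective weak limits), the quadruple $(y^*,u^*,z^*,\mu^*)$ solves \eqref{eq:thin_film_opt_cond}, which completes the proof.
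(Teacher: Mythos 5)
Your proposal follows the paper's proof essentially step for step: the uniform bound on $J(y_{\ve},u_{\ve})$ via the feasible pair built from the $\ve$-solution with $u=0$, the resulting $L^2(H_0^1)$ bound on $\{u_{\ve}\}$, the $L^2(H^4)\cap H^1(L^2)$ bound on $\{y_{\ve}\}$ from Lemma~\ref{lemma:thin_film_apriori_h4} combined with Lemma~\ref{lemma:thin_film_eps_convergence_weak}, the bound on $z_{\ve,x}$ from \eqref{eq:thin_film_opt_cond_eps_opti_cond_u}, and the duality estimate of $\mu_{\ve}$ through the four terms of \eqref{eq:thin_film_opt_cond_eps_adjoint1} are exactly the paper's Steps 1--4. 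Where you go beyond the paper is the limit passage: the paper's Step 5 is a single sentence, whereas you correctly observe that both the complementarity relation \eqref{eq:thin_film_opt_cond_eps_compl_cond} and the nonlinear terms of the adjoint equation involve a weak-times-weak-$*$ pairing in $X\times X^*$, so that $\langle y^*-y_{\ve},\mu_{\ve}\rangle$ does not vanish from uniform convergence of $y_{\ve}$ alone; your cone-closedness and sandwiching argument is the right kind of repair, even if sketched. You also make explicit that the paper's estimates of $I_2$ and $I_3$ presuppose a uniform $L^2(L^2)$ bound on $z_{\ve}$ itself, not merely on $z_{\ve,x}$ in $L^2(H^{-1})$, and that some control of the spatial mean of $z_{\ve}$ is needed to obtain it; this is a genuine point that the paper leaves implicit. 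In short: same route, with your version being more honest about the two places where the published argument is terse.
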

\begin{proof}
{\em Step 1:} We first show that $\{u_{\ve}\}$ is uniformly bounded in $L^2(H_0^1)$: to do so, we give a function $\bar u$ and a corresponding solution $\bar y_{\ve}$, which is feasible for every $\ve>0$ small enough, i.e., which is solving \eqref{eq:thin_film_eps} together with $\bar y_{\ve} \geq C_0$. For $u = 0$ and $\ve=0$, by Lemma \ref{lemma:thin_film_g0_gleich_0_positivity} there exists a weak solution $\bar y$ of \eqref{eq:thin_film} such that  $\bar y\geq 2C_0$. Let $\lbrace y_{\ve}^{(0)}\rbrace$ be the sequence of solutions of \eqref{eq:thin_film_eps} where $u = 0$. Then there exists $y:\Omega_T\to \R$ such that $\bar y_{\ve}^{(0)}\to y$ uniformly for $\ve\to 0$; cf.~Lemma \ref{lemma:thin_film_eps_convergence_uniform}. Hence there exists an $\ve_0>0$ such that $\bar y_{\ve}^{(0)}\geq C_0$ for every $0<\ve\leq \ve_0$. 

Since $\lbrace \bar y_{\ve}\rbrace$ is uniformly bounded (with respect to $\ve>0$) in $L^2(H^4)\cap H^1(L^2)$ by a constant depending on the fixed norm of $\bar u = 0$, we may deduce that the solution $(y_{\ve},u_{\ve})$ of Problem \ref{prob:thin_film_opti_thin_film_eps} satisfies $J(y_{\ve},u_{\ve})\leq J(y_{\ve}^{(0)},0)<\infty$, i.e., by construction of the functional $J$, the sequence $\{u_{\ve}\}$ is bounded uniformly in $L^2(H_0^1)$. Hence there exists a $u^*\in L^2(H_0^1)$ such that $u_{\ve}\rightharpoonup u^*$ weakly in $L^2(H_0^1)$.

{\em Step 2:} By Lemma \ref{lemma:thin_film_apriori_h4}, the solution $y_{\ve}$ of \eqref{eq:thin_film_eps} is uniformly bounded (with respect to $\ve>0$) in $L^2(H^4)\cap H^1(L^2)$, i.e., there exists $y^*\in L^2(H^4)\cap H^1(L^2)$ such that $y_{\ve}\rightharpoonup y^*$ weakly in $L^2(H^4)\cap H^1(L^2)$. Since all $y_{\ve}\geq C_0$, we have $y^*\geq C_0$ and $y^*$ solves \eqref{eq:thin_film} by Lemma \ref{lemma:thin_film_eps_convergence_weak}.

{\em Step 3:} We show that the Lagrange multipliers $\{(z_{\ve},\mu_{\ve})\}$ are uniformly bounded (with respect to $\ve$) in their corresponding spaces: Since $\{u_{\ve}\}$ is bounded in $L^2(H_0^1)$, we have $\{z_{\ve,x}\}$ bounded uniformly in $L^2(H^{-1})$. We will now consider \eqref{eq:thin_film_opt_cond_eps_adjoint1} and may show that $\mu_{\ve}$ is uniformly bounded in $(L^2(H^4)\cap H^1(L^2))^*$, i.e., we have to show that
\[ \Vert \mu_{\ve}\Vert_{(L^2(H^4)\cap H^1(L^2))^*} = \sup_{\stackrel{\psi\in L^2(H^4)\cap H^1(L^2)}{\Vert \psi\Vert_{L^2(H^4)\cap H^1(L^2)}\leq 1}} \vert \langle \mu_{\ve},\psi\rangle \vert \]
is bounded independently from $\ve>0$. Since $\mu_{\ve}$ is on the right-hand side of \eqref{eq:thin_film_opt_cond_eps_adjoint1}, we can represent $\mu_{\ve}$ by means of $y_{\ve}$ and $z_{\ve}$, i.e., we have
\begin{align*}
\vert \langle \mu_{\ve},\psi\rangle\vert \leq{}& \vert \langle y_{\ve}-\tilde{y},\psi\rangle\vert + \vert \langle z_{\ve},\psi_t\rangle\vert + \vert \langle z_{\ve,x},\beta\lambda y_{\ve}^{\beta-1}y_{\ve,xxx}\psi\rangle\vert \\
&+ \vert \langle z_{\ve,x},[\lambda \vert y_{\ve}\vert^{\beta}+\ve]\psi_{xxx}\rangle\vert =: I_1 + I_2 + I_3 + I_4.
\end{align*}
We estimate those terms as follows and use the bounds from the first steps (and Sobolev embeddings),\begin{align*}
I_1 &\leq \Vert y_{\ve}-\tilde{y}\Vert\Vert \psi\Vert \leq C, \\
 I_2 & \leq \Vert z_{\ve}\Vert \Vert \psi_t\Vert \leq C, \\
I_3 &\leq \Vert z_{\ve} \Vert_{{ L^2(L^{2})}} \Vert \beta \lambda y_{\ve}^{\beta-1}y_{\ve,xxx}\psi\Vert_{{ L^2(H^{1})}} \leq C, \\
 I_4 &\leq \Vert z_{\ve,x}\Vert_{{ L^2(H^{-1})}} \Vert [\lambda \vert y_{\ve}\vert^\beta+\ve]\psi_{xxx}\Vert_{{ L^2(H^{1})}}\leq C, 
\end{align*}
where we used that $\Vert \psi\Vert_{L^2(H^4)\cap H^1(L^2)}\leq 1$.

Adding up, we arrive at $\sup_{\ve>0} \Vert \mu_{\ve}\Vert_{(L^2(H^4)\cap H^1(L^2))^*}\leq C$, i.e., $\lbrace \mu_{\ve}\rbrace$ is uniformly bounded with respect to $\ve>0$.

{\em Step 4:} By the bounds from the previous step, there exist $z_x^*\in L^2(H^{-1})$, and $\mu^*\in (L^2(H^4)\cap H^1(L^2))^*$ such that $z_{\ve,x}\rightharpoonup z_x^*$ weakly in $L^2(H^{-1})$, and $\mu_{\ve}\rightharpoonup \mu^*$ weakly in $(L^2(H^4)\cap H^1(L^2))^*$.

{\em Step 5:} Being equipped with the bounds and the convergence result in the last step, we may now show that $(z^*,\mu^*)$ solve \eqref{eq:thin_film_opt_cond_compl_cond}, \eqref{eq:thin_film_opt_cond_adjoint1}, and \eqref{eq:thin_film_opt_cond_opti_cond_u} by taking the limit in \eqref{eq:thin_film_opt_cond_eps_compl_cond}, \eqref{eq:thin_film_opt_cond_eps_adjoint1}, and \eqref{eq:thin_film_opt_cond_eps_opti_cond_u}. This concludes the proof.
\end{proof}

\section{Penalty approximation} \label{sec:thin_film_penalty}

A penalty approximation of the original problem is not clear due to 
the possible degeneracy of the original equation \eqref{eq:thin_film}, thus leaving open its well-posedness; this is the reason why we have introduced before the intermediate Problem \ref{prob:thin_film_opti_thin_film_eps} which involves the regularized equation \eqref{eq:thin_film_eps}. For
$\varepsilon >0$ fixed, Theorem~\ref{t5.3}
validates 
convergence of minimizers of Problem~\ref{prob:thin_film_opti_thin_film_eps_gamma} towards a minimum of Problem \ref{prob:thin_film_opti_thin_film_eps} for $\gamma \rightarrow 0$. Optimality conditions are then derived, which are the starting point for numerical studies in section~\ref{sec:thin_film_comp}. 

\begin{prob} \label{prob:thin_film_opti_thin_film_eps_gamma}
Let $\ve,\gamma>0$. We define the functional
\begin{align}
J_{\gamma}(y,u):= J(y,u) + \frac1{2\gamma} \int_0^T \int_{\Omega} \left\vert \big(C_0-y\big)^+\right\vert^2 \dd x \dd t. \label{eq:thin_film_penalty_functional}
\end{align}
Find $(y_{\gamma},u_{\gamma})$ as the minimum of $J_{\gamma}$ subject to \eqref{eq:thin_film_eps}.
\end{prob}
{The following result is immediate.}
\begin{thm}
There exists a solution $(y_{\gamma},u_{\gamma})$ of Problem \ref{prob:thin_film_opti_thin_film_eps_gamma}.
\end{thm}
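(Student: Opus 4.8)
The plan is to argue by the direct method of the calculus of variations, exploiting that for each fixed $\varepsilon>0$ the state equation \eqref{eq:thin_film_eps} is uniformly parabolic and hence uniquely solvable (Lemma~\ref{lemma:thin_film_eps_existence}), so that one may regard $J_\gamma$ as a functional of the control $u$ alone. First I would observe that the feasible set is nonempty (e.g.\ take $u\equiv0$, with the corresponding weak solution $y\in L^2(H^4)\cap H^1(L^2)$ from Lemma~\ref{lemma:thin_film_eps_existence}), and that $J_\gamma\geq0$, so the infimum $J_\gamma^*:=\inf J_\gamma$ is finite and nonnegative. Then I would pick a minimizing sequence $\{(y_k,u_k)\}$ with $y_k$ the unique solution of \eqref{eq:thin_film_eps} for control $u_k$ and $J_\gamma(y_k,u_k)\searrow J_\gamma^*$.

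The second step is to extract weak limits. Since $J_\gamma(y_k,u_k)\geq \frac{\alpha}{2}\|u_k\|_{L^2(H_0^1)}^2$ (note the penalty term and the tracking term are nonnegative), the sequence $\{u_k\}$ is bounded in $L^2(H_0^1)$, so along a subsequence $u_k\rightharpoonup u_\gamma$ weakly in $L^2(H_0^1)$. Because the bound in Lemma~\ref{lemma:thin_film_apriori_h4} controls $\|y_k\|_{L^2(H^4)\cap H^1(L^2)}$ in terms of $\|u_{k,x}\|_{L^2(L^2)}$, the states $\{y_k\}$ are bounded in $L^2(H^4)\cap H^1(L^2)$, hence (along a further subsequence) $y_k\rightharpoonup y_\gamma$ weakly there and, by the compact embedding $L^2(H^4)\cap H^1(L^2)\hookrightarrow\hookrightarrow \Con(\overline{\Omega_T})$ together with the H\"older estimates of Lemmas~\ref{lemma:thin_film_hoelder_space} and \ref{lemma:thin_film_hoelder_time}, $y_k\to y_\gamma$ uniformly on $\Omega_T$ (this is exactly the kind of passage performed in Lemma~\ref{lemma:thin_film_eps_convergence_weak}, except here no lower bound on $y_k$ is needed since the $\varepsilon>0$ term alone makes the equation nondegenerate). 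The uniform convergence, plus the weak $H^4$ convergence of the third spatial derivatives, lets me pass to the limit term by term in the weak formulation of \eqref{eq:thin_film_eps}, as in Lemma~\ref{lemma:thin_film_eps_convergence_weak}, to conclude that $y_\gamma$ is the weak solution of \eqref{eq:thin_film_eps} associated with $u_\gamma$; hence $(y_\gamma,u_\gamma)$ is feasible for Problem~\ref{prob:thin_film_opti_thin_film_eps_gamma}.

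The third step is lower semicontinuity: $J$ is weakly lower semicontinuous on $(L^2(H^4)\cap H^1(L^2))\times L^2(H_0^1)$ (the tracking term is continuous under uniform convergence of $y_k$, and $u\mapsto\frac{\alpha}{2}\|u\|_{L^2}^2$ is weakly l.s.c.), and the penalty term $\frac{1}{2\gamma}\int_0^T\!\!\int_\Omega |(C_0-y)^+|^2$ is continuous with respect to the uniform convergence $y_k\to y_\gamma$ since $t\mapsto (t)^+$ is Lipschitz. Combining, $J_\gamma(y_\gamma,u_\gamma)\leq\liminf_k J_\gamma(y_k,u_k)=J_\gamma^*$, and feasibility forces equality, so $(y_\gamma,u_\gamma)$ is a minimizer.

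I expect no serious obstacle here: the presence of $\varepsilon>0$ removes the degeneracy that makes the original Problem~\ref{prob:thin_film_opti} delicate, so the compactness and limit-passage arguments are exactly those already carried out in Lemmas~\ref{lemma:thin_film_eps_convergence_weak} and \ref{lemma:thin_film_apriori_h4} and in Theorem~\ref{thm:thin_film_existence_opti_mod}. The only point requiring a moment's care is verifying that the penalty term behaves well under the relevant convergence — but uniform convergence of the states together with the Lipschitz continuity of the positive-part function settles this immediately, which is presumably why the authors call the result "immediate." A short proof will simply cite Lemmas~\ref{lemma:thin_film_apriori_h4}, \ref{lemma:thin_film_eps_existence}, \ref{lemma:thin_film_eps_convergence_weak} and mimic the three-step structure of the proof of Theorem~\ref{thm:thin_film_existence_opti_mod}.
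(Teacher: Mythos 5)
Your argument is correct and is exactly the direct-method proof the authors have in mind (the paper simply declares the result ``immediate'' and gives no proof, since it mimics Steps 1--3 of Theorem~\ref{thm:thin_film_existence_opti_mod} with the penalty term handled by uniform convergence of the states). The one point you rightly flag --- that Lemmas~\ref{lemma:thin_film_hoelder_space}--\ref{lemma:thin_film_apriori_h4} are stated under the hypothesis $y\geq C_0$, which is \emph{not} available for the minimizing sequence of Problem~\ref{prob:thin_film_opti_thin_film_eps_gamma} --- is correctly resolved by your observation that for fixed $\ve>0$ their proofs go through verbatim with $\lambda C_0^\beta+\ve$ replaced by $\ve$, at the price of $\ve$-dependent constants, which is harmless here.
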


The next theorem completes the overall convergence proof: The sequence of minima of the implementable Problem \ref{prob:thin_film_opti_thin_film_eps_gamma} converges to a minimum of Problem \ref{prob:thin_film_opti_thin_film_eps} for $\gamma\to 0$ (which again converges to a minimum of the original Problem \ref{prob:thin_film_opti}). { This result is valid for a certain range for involved parameters,
using $\widetilde{C}_0$ (see Lemma~\ref{lemma:thin_film_g0_gleich_0_positivity}) and $\widetilde{\varepsilon}_0$ (see Theorem \ref{thm:thin_film_existence_opti_regularization}).}

\begin{thm}\label{t5.3}
Let $\beta \geq 4$, $0 < C_0 \leq \widetilde{C}_0$, $0 < \ve \leq \widetilde{\varepsilon}_0$, and $\lbrace(y_{\gamma},u_{\gamma})\rbrace$ be a sequence of solutions of Problem \ref{prob:thin_film_opti_thin_film_eps_gamma}. Then, there exist $y^*\in L^2(H^4)\cap H^1(L^2)$, and $u^*\in L^2(H_0^1)$ such that $y_{\gamma}\rightharpoonup y^*$ weakly in $L^2(H^4)\cap H^1(L^2)$, and $u_{\gamma}\rightharpoonup u^*$ weakly in $L^2(H_0^1)$ for $\gamma\to 0$ (up to subsequences). Moreover, $(y^*,u^*)$ is a solution of Problem \ref{prob:thin_film_opti_thin_film_eps}.
\end{thm}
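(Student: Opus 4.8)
The plan is to follow the standard direct method for $\Gamma$-convergence of penalty approximations, splitting the argument into three parts: a priori bounds and extraction of weakly convergent subsequences, a $\limsup$-type feasibility argument showing the penalty term vanishes in the limit, and a $\liminf$-type optimality argument using weak lower semicontinuity of $J$.

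First I would establish that $\{u_\gamma\}$ is uniformly bounded in $L^2(H_0^1)$. Since $\ve$ is fixed and $0<\ve\le\widetilde\ve_0$, Theorem~\ref{thm:thin_film_existence_opti_regularization} guarantees Problem~\ref{prob:thin_film_opti_thin_film_eps} is feasible; pick any feasible pair $(\tilde y,\tilde u)$ for it (e.g.\ the one coming from $u=0$ as in the proof of that theorem, which satisfies $\tilde y\ge C_0$). Then $(\tilde y,\tilde u)$ is also admissible for Problem~\ref{prob:thin_film_opti_thin_film_eps_gamma}, and since $(\tilde C_0-\tilde y)^+=0$ we get $J_\gamma(y_\gamma,u_\gamma)\le J_\gamma(\tilde y,\tilde u)=J(\tilde y,\tilde u)<\infty$ uniformly in $\gamma$. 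From the definition of $J_\gamma$ this bounds $\|u_\gamma\|_{L^2(H_0^1)}$ and, crucially, shows $\frac1{2\gamma}\|(C_0-y_\gamma)^+\|_{L^2(L^2)}^2\le C$ uniformly, hence $\|(C_0-y_\gamma)^+\|_{L^2(L^2)}^2\le C\gamma\to 0$. By Lemma~\ref{lemma:thin_film_apriori_h4} the corresponding states $y_\gamma$ solving \eqref{eq:thin_film_eps} are uniformly bounded in $L^2(H^4)\cap H^1(L^2)$ in terms of $\|u_{\gamma,x}\|_{L^2(L^2)}$. Extract subsequences $u_\gamma\rightharpoonup u^*$ in $L^2(H_0^1)$ and $y_\gamma\rightharpoonup y^*$ in $L^2(H^4)\cap H^1(L^2)$, with $y_\gamma\to y^*$ uniformly in $\Omega_T$ via the compact Sobolev embedding $L^2(H^4)\cap H^1(L^2)\hookrightarrow\Con(\overline{\Omega_T})$ and the equicontinuity from Lemmas~\ref{lemma:thin_film_hoelder_space} and \ref{lemma:thin_film_hoelder_time} (applied with the uniform bounds just obtained — note these require a pointwise lower bound, which is available once we know $y^*\ge C_0$; alternatively one argues equicontinuity directly from the $H^4\cap H^1(L^2)$ bound via the compact embedding).

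Next I would pass to the limit in the state equation: since $y_\gamma\to y^*$ uniformly and $y_{\gamma,xxx}\rightharpoonup y^*_{xxx}$ weakly in $L^2(L^2)$, the nonlinear term $[\lambda|y_\gamma|^\beta+\ve]y_{\gamma,xxx}$ converges (test against $\varphi_x$, split as in the proof of Lemma~\ref{lemma:thin_film_eps_convergence_weak}), so $y^*$ is the weak solution of \eqref{eq:thin_film_eps} with control $u^*$; by uniqueness (Lemma~\ref{lemma:thin_film_eps_existence}) the whole subsequence converges. For feasibility, the uniform convergence $y_\gamma\to y^*$ gives $(C_0-y_\gamma)^+\to(C_0-y^*)^+$ uniformly, while the bound $\|(C_0-y_\gamma)^+\|_{L^2(L^2)}\to0$ forces $(C_0-y^*)^+\equiv 0$, i.e.\ $y^*\ge C_0$ in $\Omega_T$. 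Hence $(y^*,u^*)$ is admissible for Problem~\ref{prob:thin_film_opti_thin_film_eps}. Finally, for optimality: let $(\hat y,\hat u)$ be any minimizer of Problem~\ref{prob:thin_film_opti_thin_film_eps}. It is admissible for Problem~\ref{prob:thin_film_opti_thin_film_eps_gamma} with zero penalty contribution, so $J(y_\gamma,u_\gamma)\le J_\gamma(y_\gamma,u_\gamma)\le J_\gamma(\hat y,\hat u)=J(\hat y,\hat u)$. Taking $\liminf_{\gamma\to0}$ and using weak lower semicontinuity of $J$ (convexity of the two quadratic terms plus $y_\gamma\rightharpoonup y^*$, $u_\gamma\rightharpoonup u^*$) yields $J(y^*,u^*)\le\liminf J(y_\gamma,u_\gamma)\le J(\hat y,\hat u)$, so $(y^*,u^*)$ is a minimizer.

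The main obstacle is the passage to the limit in the degenerate/nonlinear term of the state equation together with ensuring enough compactness: weak $L^2(H^4)$ convergence of $y_\gamma$ alone does not obviously pass through $|y_\gamma|^\beta y_{\gamma,xxx}$, so one genuinely needs the strong (uniform) convergence of $y_\gamma$ — which is where the $\Con^{0,1/8}(\Con^{0,1/2})$ equicontinuity and the compact embedding are used — and one should be slightly careful that Lemmas~\ref{lemma:thin_film_hoelder_space}--\ref{lemma:thin_film_hoelder_time} were stated under the hypothesis $y\ge C_0$, so either one invokes the compact embedding $L^2(H^4)\cap H^1(L^2)\hookrightarrow\hookrightarrow\Con(\overline{\Omega_T})$ directly (using only the uniform $L^2(H^4)\cap H^1(L^2)$ bound from Lemma~\ref{lemma:thin_film_apriori_h4}, which holds for any weak solution of \eqref{eq:thin_film_eps} with $y\ge C_0$, and the $y_\gamma$ do satisfy the penalized inequality but not literally $y_\gamma\ge C_0$ — here one notes Lemma~\ref{lemma:thin_film_apriori_h4}'s proof only uses $y\ge C_0$ through $|y|^\beta\ge C_0^\beta$, which still follows once a uniform positive lower bound on $y_\gamma$ is in hand, and such a bound comes precisely from $\|(C_0-y_\gamma)^+\|\to0$ plus equicontinuity), or one simply replaces that step by the Aubin–Lions/Arzelà–Ascoli compactness coming from $y_\gamma$ bounded in $L^2(H^4)$ and $\partial_t y_\gamma$ bounded in $L^2(L^2)$. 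This is a routine but slightly delicate bootstrapping that should be spelled out carefully.
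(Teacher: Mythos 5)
Your proposal is correct and follows essentially the same route as the paper's proof: compare $J_\gamma(y_\gamma,u_\gamma)$ with a fixed feasible pair of Problem~\ref{prob:thin_film_opti_thin_film_eps} to obtain uniform bounds and the vanishing of the penalty term, extract weak limits via Lemma~\ref{lemma:thin_film_apriori_h4}, pass to the limit in the state equation using the strong (uniform) convergence of $y_\gamma$, and conclude by weak lower semicontinuity. The technical worry you raise about Lemma~\ref{lemma:thin_film_apriori_h4} presupposing $y\ge C_0$ is resolved most cleanly by observing that for the fixed $\ve>0$ the equation is uniformly parabolic with coercivity constant $\ve$, so the $L^2(H^4)\cap H^1(L^2)$ bound for $y_\gamma$ holds without any positivity (with a constant depending on $\ve$, which is harmless here); the paper's own Step~2 glosses over this point exactly as you suspected, and your Aubin--Lions alternative is the right fallback.
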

\begin{proof}
{\em Step 1:} Let $(y_{\gamma},u_{\gamma})$ be a solution of Problem \ref{prob:thin_film_opti_thin_film_eps_gamma}.
We first show that the functional is uniformly bounded with respect to $\gamma>0$: Let $(\bar y,\bar u)$ be the solution of Problem \ref{prob:thin_film_opti_thin_film_eps}, i.e., $(\bar y,\bar u)$ solve \eqref{eq:thin_film_eps}, $\bar y\geq C_0$ and $J(\bar y,\bar u)$ is the minimum value. Since $\bar y\geq C_0$, we have $J_{\gamma}(\bar y,\bar u)=J(\bar y,\bar u)$ independent of $\gamma>0$.

By the minimizing property of $(y_{\gamma},u_{\gamma})$, there holds 
\[ J_{\gamma}(y_{\gamma},u_{\gamma})\leq J_{\gamma}(\bar y,\bar u) = J(\bar y,\bar u) < \infty. \]
Hence, $J_{\gamma}(y_{\gamma},u_{\gamma})$ is uniformly bounded with respect to $\gamma>0$. 

{\em Step 2:} We want to get weak limit functions: from the definition of $J_{\gamma}$, we derive a uniform (with respect to $\gamma>0$) bound for $u_{\gamma}$ in the $L^2(H_0^1)$-norm. By the a-priori estimates from Lemma \ref{lemma:thin_film_apriori_h4}, $\{y_{\gamma}\}$ is uniformly (with respect to $\gamma>0$) bounded in the $L^2(H^4)\cap H^1(L^2)$-norm. Therefore, there exists $(y^*,u^*)\in \big(L^2(H^4)\cap H^1(L^2)\big)\times L^2(H_0^1)$ such that $(y_{\gamma},u_{\gamma})\rightharpoonup (y^*,u^*)$ weakly in the corresponding spaces (up to subsequences).

{\em Step 3:} We want to show that the limit functions $(y^*,u^*)$ are feasible for Problem \ref{prob:thin_film_opti_thin_film_eps}: it is easy to verify that $(y^*,u^*)$ solves \eqref{eq:thin_film_eps} like it was done, e.g., in the proof of Theorem \ref{thm:thin_film_existence_opti_mod}. It remains to show that $y^*\geq C_0$. Since $J_{\gamma}(y_{\gamma},u_{\gamma})\leq C$ uniformly in $\gamma>0$, we know that for $\gamma\to 0$,
\[ \int_0^T \int_{\Omega} \left\vert \big(C_0-y_{\gamma}\big)^+ \right\vert^2 \dd t \dd x \to 0, \]
i.e., we have $\big(C_0-y_{\gamma}\big)^+\to 0$ a.e. in $\Omega_T$, and hence $y^*\geq C_0$.

{\em Step 4:} Finally, we show that $(y^*,u^*)$ is a solution of Problem \ref{prob:thin_film_opti_thin_film_eps}: We have to show that $J(y^*,u^*)\leq J(y,u)$ for every $(y,u)$ solving \eqref{eq:thin_film_eps} and $y\geq C_0$. 

Let $(\bar y,\bar u)$ be a solution of Problem~\ref{prob:thin_film_opti_thin_film_eps}. By the first parts of the proof, we know that $(y^*,u^*)$ is feasible for Problem~\ref{prob:thin_film_opti_thin_film_eps}, i.e., we have $J_{\gamma}(y^*,u^*)=J(y^*,u^*)$. Since $(y_{\gamma},u_{\gamma})\rightharpoonup (y^*,u^*)$ weakly in the corresponding spaces by the second part of the proof, and $J$ is weakly lower semi-continuous, we have
\begin{align}
J(y^*,u^*)\leq \liminf_{\gamma\to 0} J_{\gamma}(y_{\gamma},u_{\gamma}) \leq J(\bar y,\bar u), \label{eq:thin_film_pen_convergence_minimum_eigenschaft}
\end{align}
where we used Step 1 which relies on $(y_{\gamma},u_{\gamma})$ being a solution of Problem~\ref{prob:thin_film_opti_thin_film_eps_gamma}.

Since $(\bar y,\bar u)$ is a minimum of $J$, all quantities in \eqref{eq:thin_film_pen_convergence_minimum_eigenschaft} must be equal, i.e., $(y^*,u^*)$ is a solution of Problem \ref{prob:thin_film_opti_thin_film_eps}.
\end{proof}

As in the last sections, we can now derive an analogon to \eqref{eq:thin_film_opt_cond} and \eqref{eq:thin_film_opt_cond_eps}, respectively, which can be proven even by the standard Lagrange multiplier theorem due to the absence of state constraints.

\begin{thm}
Let $(y,u)$ be a minimum of Problem \ref{prob:thin_film_opti_thin_film_eps_gamma}. Then, there exists a Lagrange multiplier $z\in L^2(L^2)$ such that the following equations are fulfilled.
\begin{subequations}
\begin{align}
y_t ={}& -\big([\lambda\vert y\vert^\beta+\ve]y_{xxx}\big)_x + u_x, \label{eq:thin_film_opt_cond_eps_gamma_state} \\
0 ={}& \langle y-\tilde{y},\varphi\rangle + \big\langle z,\varphi_t + (\beta\lambda y^{\beta-1}y_{xxx}\varphi)_x\big\rangle + \big\langle z,\big( [\lambda\vert y\vert^\beta+\ve]\varphi_{xxx}\big)_x\big\rangle \label{eq:thin_film_opt_cond_eps_gamma_adjoint1} \\
&+ \frac1{\gamma}\langle \varphi,(C_0-y)^+\mu\rangle \quad \forall \varphi\in X, \nonumber \\
0 ={}& -\alpha u_{xx} + z_x, \label{eq:thin_film_opt_cond_eps_opti_gamma_cond_u}
\end{align} \label{eq:thin_film_opt_cond_eps_gamma}%
\end{subequations}
together with initial conditions $y(0,.)=y_0$, $z(T,.)=0$, and boundary conditions $y_x=y_{xxx}=z_x=z_{xxx}=0$ in $a,b$.
\end{thm}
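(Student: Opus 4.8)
The plan is to apply the classical Lagrange multiplier theorem (Karush--Kuhn--Tucker in Banach spaces) to Problem~\ref{prob:thin_film_opti_thin_film_eps_gamma}, which is now an equality-constrained problem: the only constraint is the regularized state equation \eqref{eq:thin_film_eps}, since the state constraint $y \geq C_0$ has been moved into the objective via the penalty term. So I would reuse the functional-analytic setup from Section~\ref{sec:thin_film_opt_original}: take $X := L^2(H^4) \cap H^1(L^2)$, $U := L^2(H_0^1)$, $V := L^2(L^2)$, and the constraint map $G_\ve(y,u) := y_t + ([\lambda|y|^\beta + \ve] y_{xxx})_x - u_x$ (incorporating the initial and boundary conditions by standard means, as already remarked after Lemma~\ref{lemma:thin_film_well-def_G_H}). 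The objective is now $J_\gamma$ rather than $J$, but $J_\gamma$ is Fr\'echet differentiable: the new term $\frac{1}{2\gamma}\int_0^T\int_\Omega |(C_0 - y)^+|^2$ has derivative $-\frac{1}{\gamma}\langle (C_0 - y)^+, \delta y\rangle$ in the direction $\delta y$, since $t \mapsto |(t)^+|^2$ is $C^1$ with derivative $2t^+$.

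First I would record the Fr\'echet derivatives. By a computation identical to Lemma~\ref{lemma:thin_film_frechet_ableitung_G} (but with the extra $\ve$ in the leading coefficient), $\langle (G_\ve)'_y(y,u), \delta y\rangle = (\delta y)_t + (\beta\lambda y^{\beta-1} y_{xxx}\,\delta y)_x + ([\lambda|y|^\beta + \ve](\delta y)_{xxx})_x$ and $\langle (G_\ve)'_u(y,u), \delta u\rangle = -(\delta u)_x$. Second, I would verify surjectivity of $(G_\ve)'_y(y,u) : X \to V$; this is exactly the content of Lemma~\ref{lemma:thin_film_ableitung_surj}, whose proof goes through verbatim since $y \geq C_0$ makes the leading part uniformly parabolic/elliptic (indeed it is even easier here because $\ve > 0$ adds extra coercivity). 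With these two facts, the regular-point condition for the equality-constrained KKT theorem holds, so there exist a multiplier $\zeta \geq 0$ and $p \in V^* = L^2(L^2)$, not both zero; and since $(G_\ve)'_y$ is onto, the multiplier cannot be singular, so we may normalize $\zeta = 1$.

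The stationarity conditions then read: for all $\varphi \in X$,
\[
0 = \langle (J_\gamma)'_y(y,u), \varphi\rangle + \langle p, (G_\ve)'_y(y,u)\varphi\rangle,
\]
and for all $\delta u \in U$,
\[
0 = \langle (J_\gamma)'_u(y,u), \delta u\rangle + \langle p, (G_\ve)'_u(y,u)\delta u\rangle.
\]
Writing $z := p$ and inserting the derivatives, the first identity becomes $\langle y - \tilde y, \varphi\rangle - \frac{1}{\gamma}\langle (C_0 - y)^+, \varphi\rangle + \langle z, \varphi_t + (\beta\lambda y^{\beta-1}y_{xxx}\varphi)_x + ([\lambda|y|^\beta+\ve]\varphi_{xxx})_x\rangle = 0$, which is \eqref{eq:thin_film_opt_cond_eps_gamma_adjoint1} (the stated penalty term $\frac1\gamma\langle\varphi,(C_0-y)^+\mu\rangle$ should be read with the sign and with $\mu$ a placeholder constant; I would present it cleanly as $-\frac1\gamma\langle (C_0-y)^+,\varphi\rangle$). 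The $u$-equation gives $\alpha\langle u, \delta u\rangle - \langle z, (\delta u)_x\rangle = 0$, i.e.\ $\langle \alpha u + z_x, \delta u\rangle = 0$ after integration by parts using the boundary conditions; since $\delta u$ ranges over all of $L^2(H_0^1)$ this yields $-\alpha u_{xx} + z_x = 0$ in the appropriate weak sense, which is \eqref{eq:thin_film_opt_cond_eps_opti_gamma_cond_u}. Finally, $z(T,\cdot) = 0$ and the boundary conditions $z_x = z_{xxx} = 0$ on $\{a,b\}$ arise from the integration-by-parts identities in the adjoint equation, handled by the standard methods referenced in the text.

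The routine parts are the derivative computations and the boundary-term bookkeeping. The only genuine point requiring care — and the main obstacle — is ensuring the regularity of the adjoint state $z$: a priori the KKT theorem delivers only $z = p \in V^* = L^2(L^2)$, and one must check this is enough to make sense of every pairing in \eqref{eq:thin_film_opt_cond_eps_gamma_adjoint1}, in particular the term $\langle z, (\beta\lambda y^{\beta-1}y_{xxx}\varphi)_x\rangle$. Here I would integrate by parts to move the outer $\partial_x$ onto the test function, turning it into $-\langle z, \beta\lambda y^{\beta-1}y_{xxx}\varphi_x\rangle$, and then use $y \in L^2(H^4) \cap H^1(L^2) \subset \Con(\overline{\Omega_T})$ together with $\varphi \in X \subset \Con$ and $y_{xxx} \in L^2(L^2)$ (by Lemma~\ref{lemma:thin_film_hoelder_space}, or rather Lemma~\ref{lemma:thin_film_apriori_h4}) to see that $\beta\lambda y^{\beta-1}y_{xxx}\varphi_x \in L^2(L^2)$, so the pairing with $z \in L^2(L^2)$ is well defined; similarly $([\lambda|y|^\beta+\ve]\varphi_{xxx})_x$ is $L^2(L^2)$-regular after one integration by parts. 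This is precisely why Problem~\ref{prob:thin_film_opti_thin_film_eps_gamma} is set in the spaces it is, and why the theorem is described as ``immediate'' once the Section~\ref{sec:thin_film_opt_original} machinery is in place.
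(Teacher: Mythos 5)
Your proposal is correct and follows essentially the same route as the paper, which proves this theorem only by the remark that the standard Lagrange multiplier theorem applies ``due to the absence of state constraints,'' reusing the spaces, the Fr\'echet derivatives of Lemma~\ref{lemma:thin_film_frechet_ableitung_G}, and the surjectivity argument of Lemma~\ref{lemma:thin_film_ableitung_surj}; you also correctly identify the sign and the stray $\mu$ in \eqref{eq:thin_film_opt_cond_eps_gamma_adjoint1} as typographical. Two small cautions: a minimizer of Problem~\ref{prob:thin_film_opti_thin_film_eps_gamma} need \emph{not} satisfy $y\geq C_0$ (the penalty only discourages violations), so Lemma~\ref{lemma:thin_film_ableitung_surj} does not apply ``verbatim'' -- the uniform parabolicity must come from $\ve>0$, as you note parenthetically, and this should be the stated reason; and in the control equation the step from $\langle \alpha u + z_x,\delta u\rangle=0$ to $-\alpha u_{xx}+z_x=0$ is a non sequitur -- with the cost $\tfrac{\alpha}{2}\Vert u_x\Vert^2$ of Problem~\ref{prob:thin_film_opti} one gets $\alpha(u_x,\delta u_x)-(z,\delta u_x)=0$ and hence $-\alpha u_{xx}+z_x=0$ directly (the confusion traces to the paper's own inconsistency between the functionals in Problems~\ref{prob:thin_film_opti} and~\ref{prob:thin_film_opti_thin_film_eps}).
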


\section{Computational studies} \label{sec:thin_film_comp}

{We discretize the optimization Problem~\ref{prob:thin_film_opti_thin_film_eps_gamma} in space and time and
follow the strategy `first discretize, then optimize' (see e.g.~\cite{hinze_buch}) to arrive at the finite dimensional Problem~\ref{prob:thin_film_opti_disc}.}
The studies
which are reported below are meant to do both, illustrate and complement the theoretical results in the previous sections.

\subsection{Discretization of the equation}

We use the following space-time discretization scheme for \eqref{eq:thin_film_eps}, which was originally suggested for \eqref{eq:thin_film} in \cite{becker2005}. 

Let $hN_{\text{space}}=b-a$ and $x_i:=a+ih$ for $i=0,\ldots,N_{\text{space}}$ denote the set of spatial nodes. Define the standard finite element space $V_h$ containing piecewise linear functions, via
\[ V_h:=\left\lbrace v_h\in\Con([a,b]): \; v_h\big\vert_{[x_i,x_{i+1}]}\in P_1 \right\rbrace, \]
cf.~\cite{brenner_scott}. The function $P_h:L^2\to V_h$ denotes the projection onto $V_h$ with respect to the $L^2$ scalar product.

Let $kN_{\text{time}}=T$, and let $t_n:=nk$ for $n=0,\ldots,N_{\text{time}}$ denote the nodal points of a time grid which covers $[0,T]$.
In below, let $\lbrace V^n\rbrace \subseteq X_h$ denote a family of finite element functions evaluated at subsequent times $t_n$, while $V:\Omega_T\to \R$ stands for the piecewise affine, globally continuous time interpolant of $\lbrace V^n\rbrace$. Sometimes, we also write $V(t=t_n)$ instead of $V^n$.

The discrete version of \eqref{eq:thin_film_eps}  reads as follows.

\begin{prob} \label{prob:thin_film_disc}
Let $Y_0:= P_hy_0\in V_h$. Set $Y^0:=Y_0$, find $P^0\in V_h$ such that
\[ (Y^0_x,\Phi_x) - (P^0,\Phi) = 0 \quad \forall \Phi\in V_h. \]
Then for $n=1,\ldots,N_{\text{time}}-1$ find $Y^{n+1}\in V_h$, $P^{n+1}\in V_h$ and $P^{n+1}\in V_h$, such that
\begin{align}
\begin{aligned}
\frac1k(Y^{n+1}-Y^n,\Phi) + ([\lambda \vert Y^{n+1}\vert^\beta + \ve]P^{n+1}_x,\Phi_x) &= (U_x(t_{n+1}),\Phi) \quad \forall \Phi\in V_h, \\
(Y^{n+1}_x,\Phi_x) - (P^{n+1},\Phi) &= 0 \quad \forall \Phi\in V_h.
\end{aligned} \label{eq:thin_film_eps_disc}
\end{align}
\end{prob}

The coupled system \eqref{eq:thin_film_eps_disc} is solved by Newton's method with exact derivatives; all terms (which are polynomials of higher order) are assembled exactly by using an accurate quadrature rule. 

Lemma~\ref{lemma:thin_film_eps_existence} motivates solvability of \eqref{eq:thin_film_eps_disc} for $\ve>0$. However, for small $\ve>0$, the system matrix has a high condition number in the presence of related large values of the approximation of $U_x(t_n)$ and small values of $\lbrace Y^n\rbrace$ due to the algebraic form of $f_{\ve}$. We encountered this problem in the form of a singular system matrix on the level of numerical linear algebra. Smaller values of $k$, larger values of $\ve$ and---in the context of optimal control---state constraints help to overcome this issue.

{For all experiments in this section, we choose $\lambda=1.0$ and $\beta = 3$}. The iteration in Newton's method stops if the difference of two consecutive iterations is less than $10^{-10}$, or if the maximum number of iterations exceeds $1\,000$. However, except for those experiments with singular system matrices, the observed number of iterates was well below
these values (average 2--5/max.~30 iterations; highly depending on the specific experiment).

\subsection{Simulations of the equation}

We want to find a right-hand side $U$ such that the corresponding solution is non-positive in order to show the need of state constraints for the optimization. 

For the first experiment, we take $[a,b]=[0,5]$, $T=1.0$, $N_{\text{space}}=48$, $N_{\text{time}}=30\,000$. We solve \eqref{eq:thin_film_eps_disc} for $U = 0$ and $U(x)=0.35\sin\left(\frac{\pi x}{b-a}\right)$ and $\ve=0.03$. For comparison, we also include the solution $Y$ for $U = 0$ and $\ve=0$; see Figure~\ref{fig:thin_film_negative_sol_only_state_bsp}. We see that the solution $Y$ becomes significantly negative for $U\neq 0$, while the solution $Y$ stays positive at all times no matter how $\ve \geq 0$ { is chosen for $U = 0$} (hence the negativity effect does not depend on $\ve$).

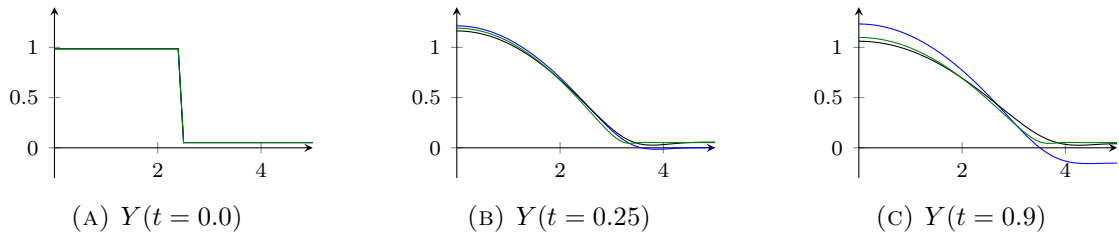
\begin{figure}[ht]
\centering
\begin{subfigure}[b]{0.32\textwidth}
\centering
\begin{tikzpicture}
\begin{axis}[three_style,
ymin=-0.3,
ymax=1.4,
]
\addplot[blue] table {figures/fig01/fig01_data3_0.dat}; 
\label{pgfplots:thin_film_bsp_neg_sol}
\addplot[black] table {figures/fig01/fig01_data1_0.dat}; 
\label{pgfplots:thin_film_bsp_neg_sol_vgl_u0_eps003}
\addplot[green!50!black] table {figures/fig01/fig01_data2_0.dat}; 
\label{pgfplots:thin_film_bsp_neg_sol_vgl_u0_eps0}
\end{axis}
\end{tikzpicture}
\caption{$Y(t=0.0)$}
\end{subfigure}
\hfill
\begin{subfigure}[b]{0.32\textwidth}
\centering
\begin{tikzpicture}
\begin{axis}[three_style,
ymin=-0.3,
ymax=1.4,
]
\addplot[blue] table {figures/fig01/fig01_data3_50.dat}; 
\addplot[black] table {figures/fig01/fig01_data1_50.dat}; 
\addplot[green!50!black] table {figures/fig01/fig01_data2_50.dat}; 
\end{axis}
\end{tikzpicture}
\caption{$Y(t=0.25)$}
\end{subfigure}
\hfill
\begin{subfigure}[b]{0.32\textwidth}
\centering
\begin{tikzpicture}
\begin{axis}[three_style,
ymin=-0.3,
ymax=1.4,
]
\addplot[blue] table {figures/fig01/fig01_data3_180.dat}; 
\addplot[black] table {figures/fig01/fig01_data1_180.dat}; 
\addplot[green!50!black] table {figures/fig01/fig01_data2_180.dat}; 
\end{axis}
\end{tikzpicture}
\caption{$Y(t=0.9)$}
\end{subfigure}
\caption{Solution $Y$ at different times for a given right-hand side $U\neq 0$ and $\ve=0.03$~(\ref{pgfplots:thin_film_bsp_neg_sol}), for $U = 0$ and $\ve=0.03$~(\ref{pgfplots:thin_film_bsp_neg_sol_vgl_u0_eps003}), and for $U = 0$ and $\ve=0$~(\ref{pgfplots:thin_film_bsp_neg_sol_vgl_u0_eps0}).} \label{fig:thin_film_negative_sol_only_state_bsp}
\end{figure}

\subsection{Discretization of the optimal control problem}

We use the following discrete version of Problem~\ref{prob:thin_film_opti_thin_film_eps_gamma} for the simulations.

\begin{prob} \label{prob:thin_film_opti_disc}
Let $\ve>0$, $\gamma\geq 0$, and $k >0$. Define $J_{\gamma,\text{\rm disc}}:V_h^{N_{\text{time}}+1}\times V_h^{N_{\text{time}}+1}\to \R$ via
\[ J_{\gamma,\text{\rm disc}}(Y,U):= \frac{k}2 \sum_{n=0}^{N_{\text{\rm time}}} \Vert Y^n-\tilde{Y}^n\Vert^2 + \frac{\alpha k}2 \sum_{n=0}^{N_{\text{\rm time}}} \Vert U_x^n\Vert^2 + \frac{k}{2\gamma}\sum_{n=0}^{N_{\text{\rm time}}} \Vert (C_0-Y^n)^+\Vert^2, \]
where the last term is ignored if we set $\gamma=0$, and $\Vert.\Vert$ here stands for the Euclidean norm. For $\tilde{Y}^n\notin V_h$, we use the interpolation of it.

Find $(Y,U)$ which minimizes $J_{\gamma,\text{\rm disc}}$ subject to \eqref{eq:thin_film_eps_disc}.
\end{prob}
The corresponding optimality conditions for $(Y,U,Z,S) \in [V_h^{N_{\rm time}}]^4$ are for all $0 \leq n \leq N_{{\rm time}}-1$: 
\begin{subequations}
\begin{align}
\frac1k(Y^{n+1}-Y^n,\Phi) + (\lambda\vert Y^{n+1}\vert^\beta P^{n+1}_x,\Phi_x) ={}& (U_x(t_{n+1}),\Phi) \quad \forall \Phi\in V_h, \label{eq:thin_film_opt_cond_disc_state} \\
(Y^{n+1}_x,\Phi_x) - (P^{n+1},\Phi) ={}& 0 \quad \forall \Phi\in V_h, \label{eq:thin_film_opt_cond_disc_costate} \\
\frac1k(\Phi,Z^n) + (\beta \lambda \vert Y^{n+1}\vert^{\beta-1}\Phi P^{n+1}_x,Z^n_x) + (\Phi_x,S_x)={}& \frac1k(\Phi,Z^{n+1})+ (\Phi,\tilde{Y}^{n+1}-Y^{n+1}) \label{eq:thin_film_opt_cond_disc_adjoint} \\
&+ \frac1{\gamma}\left(\Phi,(C_0-Y^{n+1})^+\right) \quad \forall \Phi\in V_h, \nonumber \\
(\lambda \vert Y^{n+1}\vert^\beta\Phi_x,Z^n) -(\Phi,S^n) ={}& 0 \quad \forall \Phi\in V_h, \label{eq:thin_film_opt_cond_disc_coadjoint} \\
\alpha (U_x,\Phi_x) + (Z_x,\Phi) ={}& 0 \quad \forall \Phi\in V_h, \label{eq:thin_film_opt_cond_disc_cond_u}
\end{align} \label{eq:thin_film_opt_cond_disc}%
\end{subequations}
together with initial conditions $Y^0=Y_0$, $Z^{N_{\text{time}}}=0$. Conditions \eqref{eq:thin_film_opt_cond_disc_costate}, \eqref{eq:thin_film_opt_cond_disc_coadjoint}, and \eqref{eq:thin_film_opt_cond_disc_cond_u} are also valid for $n=0$.

Weak solutions of \eqref{eq:thin_film_eps} are unique, which is also valid for the discrete version \eqref{eq:thin_film_eps_disc} of it for $k>0$ small enough; hence the operator $U\mapsto Y(U)$ is well-defined. Therefore, we can use a steepest descent algorithm with Armijo step size rule to numerically solve Problem~\ref{prob:thin_film_opti_disc}; see \cite{herzog_kunisch,hinze_buch}.
The corresponding algorithm we use reads as follows.

\begin{alg} \label{alg:thin_film_grad_verfahren_armijo}
Set $U_0 = 0$ and fix $\sigma_*>0$, $0<\beta<1$, $\delta_{\text{\rm tol}}>0$. Compute $(Y_1,P_1)$ from solving \eqref{eq:thin_film_eps_disc}, then compute $(Z_1,S_1)$ from solving \eqref{eq:thin_film_opt_cond_disc_adjoint} and \eqref{eq:thin_film_opt_cond_disc_coadjoint}. Repeat for $r\geq 0$:
\begin{enumerate}
\item Evaluate $\nabla \tilde{J}(U_r)=\alpha U_r + (Z_r)_x$ and evaluate $\tilde{J}(U_r)$.
\item Repeat for $s\geq 0$:
\begin{enumerate}
\item Define $U_{r+1}^{(s)}:=U_r - \beta^s\nabla \tilde{J}(U_r)$.
\item Compute $(Y_{r+1}^{(s)},P_{r+1}^{(s)})$ from solving \eqref{eq:thin_film_eps_disc} for $U_{r+1}^{(s)}$ as right-hand side.
\item STOP, if
\begin{align}
\tilde{J}(U_{r+1}^{(s)})-\tilde{J}(U_r) \leq -\sigma_* \beta^s\Vert\nabla\tilde{J}(U_r)\Vert^2, \label{eq:thin_film_armijo_step_size_ok}
\end{align}
and set $U_{r+1}:=U_{r+1}^{(s)}$.
\end{enumerate}
\item Compute $(Z_{r+1},S_{r+1})$ from solving \eqref{eq:thin_film_opt_cond_disc_adjoint} and \eqref{eq:thin_film_opt_cond_disc_coadjoint}.
\item STOP, if $\Vert \nabla \tilde{J}(U_{r+1})\Vert^2\leq \delta_{\text{\rm tol}}$ and set $U_{\text{\rm opt}}=U_{r+1}$, $Y_{\text{\rm opt}}=Y_{r+1}$.
\end{enumerate}
\end{alg}
{Here, $\nabla \tilde{J}(\cdot)$ corresponds to the 
finite dimensional version of the gradient of $\tilde{J}$, which is the left-hand side of (\ref{eq:thin_film_opt_cond_disc_cond_u})}.
In all the studies below, we set $\sigma_*:=10^{-5}$ and $\beta:=0.15$. The stopping condition is set to be $\delta_{\text{tol}}:=5\cdot 10^{-5}$, which is obtained after $700$ up to $50\,000$ iterations. The number of iterations highly depends on the given data (i.e., on $Y_0$, $\tilde{Y}$, and on $\alpha,\ve,\gamma>0$).

\subsection{Comparison of the parameter \texorpdfstring{$\ve$}{epsilon}}
Let $[a,b]=[0,5]$, $T=1.0$, $N_{\text{space}}=30$, $N_{\text{time}}=5\,000$, and $\alpha=10^{-7}$. We solve \eqref{eq:thin_film_eps_disc} for $U = 0$ to study the dependencies on $\ve>0$; see Figure~\ref{fig:thin_film_comp_vgl_eps_no_control}. The bigger the value of $\ve$, the more dissipative is the evolution, and the solution becomes almost flat after a short time. In contrast to this, the solution needs a longer period of time to approach a flat profile for small values of $\ve$.

For a large value of $\ve$, the solution is slightly negative in some regions. This is due to the fact that there is no maximum principle for the biharmonic problem, which would enforce the solution to stay positive. This effect vanishes for decreasing values of $\ve$, which is in agreement with Lemma~\ref{lemma:thin_film_g0_gleich_0_positivity}.

\begin{figure}[ht]
\centering
\begin{subfigure}[b]{0.32\textwidth}
\begin{tikzpicture}
\begin{axis}[three_style,
ymin=-0.2,
ymax=1.8,
]
\addplot[blue] table {figures/fig02/01/data0.dat}; 
\label{pgfplots:thin_film_eps_no_control_eps05}
\addplot[black] table {figures/fig02/02/data0.dat}; 
\label{pgfplots:thin_film_eps_no_control_eps005}
\end{axis}
\end{tikzpicture}
\caption{$Y(t=0.0)$}
\end{subfigure}
\hfill
\begin{subfigure}[b]{0.32\textwidth}
\begin{tikzpicture}
\begin{axis}[three_style,
ymin=-0.2,
ymax=1.8,
]
\addplot[blue] table {figures/fig02/01/data20.dat}; 
\addplot[black] table {figures/fig02/02/data20.dat}; 
\end{axis}
\end{tikzpicture}
\caption{$Y(t=0.1)$}
\end{subfigure} \hfill
\begin{subfigure}[b]{0.32\textwidth}
\begin{tikzpicture}
\begin{axis}[three_style,
ymin=-0.2,
ymax=1.8,
]
\addplot[blue] table {figures/fig02/01/data40.dat}; 
\addplot[black] table {figures/fig02/02/data40.dat}; 
\end{axis}
\end{tikzpicture}
\caption{$Y(t=0.2)$}
\end{subfigure} \\
\begin{subfigure}[b]{0.32\textwidth}
\begin{tikzpicture}
\begin{axis}[three_style,
ymin=-0.2,
ymax=1.8,
]
\addplot[blue] table {figures/fig02/01/data100.dat}; 
\addplot[black] table {figures/fig02/02/data100.dat}; 
\end{axis}
\end{tikzpicture}
\caption{$Y(t=0.5)$}
\end{subfigure}
\hfill
\begin{subfigure}[b]{0.32\textwidth}
\begin{tikzpicture}
\begin{axis}[three_style,
ymin=-0.2,
ymax=1.8,
]
\addplot[blue] table {figures/fig02/01/data150.dat}; 
\addplot[black] table {figures/fig02/02/data150.dat}; 
\end{axis}
\end{tikzpicture}
\caption{$Y(t=0.75)$}
\end{subfigure}
\hfill
\begin{subfigure}[b]{0.32\textwidth}
\begin{tikzpicture}
\begin{axis}[three_style,
ymin=-0.2,
ymax=1.8,
]
\addplot[blue] table {figures/fig02/01/data200.dat}; 
\addplot[black] table {figures/fig02/02/data200.dat}; 
\end{axis}
\end{tikzpicture}
\caption{$Y(t=1.0)$} 
\end{subfigure} 
\caption{Solution $Y$ of \eqref{eq:thin_film_eps_disc} for $U = 0$, and for $\ve=0.5$~(\ref{pgfplots:thin_film_eps_no_control_eps05}) and $\ve=0.05$~(\ref{pgfplots:thin_film_eps_no_control_eps005}) at different times.} \label{fig:thin_film_comp_vgl_eps_no_control}
\end{figure}
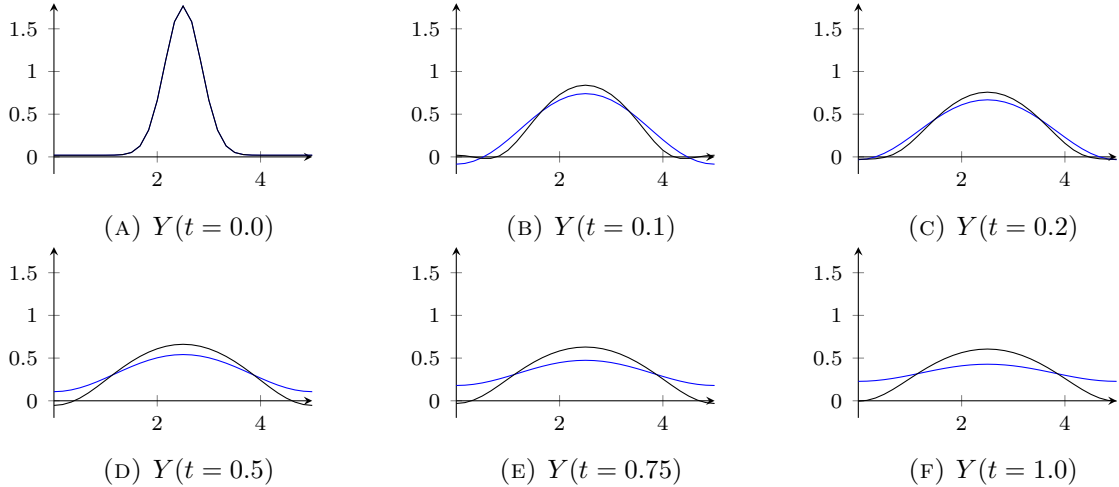

We repeat the above experiment with the same parameters for Problem~\ref{prob:thin_film_opti_disc} where $\gamma = 0$; see Figure~\ref{fig:thin_film_comp_vgl_eps_with_control}. In contrast to the previous experiment from Figure~\ref{fig:thin_film_comp_vgl_eps_no_control}, there is no significant difference between the computed evolution of the optimal states for varying values of $\ve$. This is due to the fact that the optimal state $Y=Y(\ve)$ belongs to different optimal controls $U=U(\ve)$ which drive the solution to approximately attain the given target profile $\tilde{Y}$. Figure~\ref{fig:thin_film_comp_vgl_eps_with_control} shows that relevant controls are active, as opposed to Figure~\ref{fig:thin_film_comp_vgl_eps_no_control} where $U = 0$.

\begin{figure}[ht]
\centering
\begin{subfigure}[b]{0.32\textwidth}
\begin{tikzpicture}
\begin{axis}[three_style,
ymin=-0.05,
ymax=1.8,
]
\addplot[blue] table {figures/fig03/01/data0.dat}; 
\label{pgfplots:thin_film_eps_with_control_eps05}
\addplot[black] table {figures/fig03/02/data0.dat}; 
\label{pgfplots:thin_film_eps_with_control_eps005}
\addplot[red,thick,dashed] table {figures/fig03/y_tilde.dat}; 
\label{pgfplots:thin_film_eps_with_control_target}
\end{axis}
\end{tikzpicture}
\caption{$Y(t=0.0)$}
\end{subfigure}
\hfill
\begin{subfigure}[b]{0.32\textwidth}
\begin{tikzpicture}
\begin{axis}[three_style,
ymin=-0.05,
ymax=1.8,
]
\addplot[blue] table {figures/fig03/01/data20.dat}; 
\addplot[black] table {figures/fig03/02/data20.dat}; 
\addplot[red,thick,dashed] table {figures/fig03/y_tilde.dat}; 
\end{axis}
\end{tikzpicture}
\caption{$Y(t=0.1)$}
\end{subfigure}
\hfill
\begin{subfigure}[b]{0.32\textwidth}
\begin{tikzpicture}
\begin{axis}[three_style,
ymin=-0.05,
ymax=1.8,
]
\addplot[blue] table {figures/fig03/01/data40.dat}; 
\addplot[black] table {figures/fig03/02/data40.dat}; 
\addplot[red,thick,dashed] table {figures/fig03/y_tilde.dat}; 
\end{axis}
\end{tikzpicture}
\caption{$Y(t=0.2)$}
\end{subfigure} \\
\begin{subfigure}[b]{0.32\textwidth}
\begin{tikzpicture}
\begin{axis}[three_style,
ymin=-0.05,
ymax=1.8,
]
\addplot[blue] table {figures/fig03/01/data100.dat}; 
\addplot[black] table {figures/fig03/02/data100.dat}; 
\addplot[red,thick,dashed] table {figures/fig03/y_tilde.dat}; 
\end{axis}
\end{tikzpicture}
\caption{$Y(t=0.5)$}
\end{subfigure}
\hfill
\begin{subfigure}[b]{0.32\textwidth}
\begin{tikzpicture}
\begin{axis}[three_style,
ymin=-0.05,
ymax=1.8,
]
\addplot[blue] table {figures/fig03/01/data150.dat}; 
\addplot[black] table {figures/fig03/02/data150.dat}; 
\addplot[red,thick,dashed] table {figures/fig03/y_tilde.dat}; 
\end{axis}
\end{tikzpicture}
\caption{$Y(t=0.75)$}
\end{subfigure}
\hfill
\begin{subfigure}[b]{0.32\textwidth}
\begin{tikzpicture}
\begin{axis}[three_style,
ymin=-0.05,
ymax=1.8,
]
\addplot[blue] table {figures/fig03/01/data200.dat}; 
\addplot[black] table {figures/fig03/02/data200.dat}; 
\addplot[red,thick,dashed] table {figures/fig03/y_tilde.dat}; 
\label{pgfplots:thin_film_alpha_state_y_tilde}
\end{axis}
\end{tikzpicture}
\caption{$Y(t=1.0)$}
\end{subfigure}
\caption{Target $\tilde{Y}$~(\ref{pgfplots:thin_film_alpha_state_y_tilde}), and optimal state $Y$ for $\ve=0.5$~(\ref{pgfplots:thin_film_eps_with_control_eps05}) and $\ve=0.05$~(\ref{pgfplots:thin_film_eps_with_control_eps005}) at different times.} \label{fig:thin_film_comp_vgl_eps_with_control}
\end{figure}
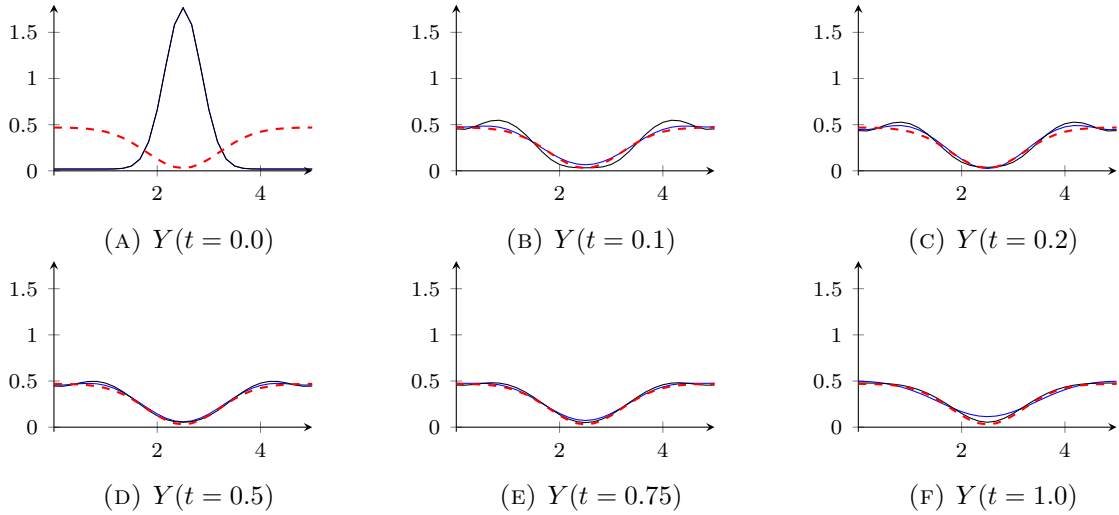

\subsection{Comparison of the parameter \texorpdfstring{$\gamma$}{gamma} and dewetting application} \label{subsec:thin_film_comp_gamma}

In this experiment, we take $C_0=0.0$, $\alpha=10^{-7}$, $\ve=0.1$, $N_{\text{space}}=42$, $N_{\text{time}}=5\,000$ and simulate for different values of $\gamma>0$; see Figure~\ref{fig:thin_film_comp_vgl_gamma}. Here, $\tilde{Y}$ is constant in time and the profile is given in the figure. 
The practical importance of a thin film with locally vanishing height is discussed in
\cite{becker_nature}, and is known as dewetting effect.  Figure~\ref{fig:thin_film_comp_vgl_gamma} displays snapshots
for the height function of a corresponding study. We observe a different dynamics for $\gamma > 0$ (which enforces non-negativity) vs.~$\gamma =0$ (where violation of non-negativity happens).

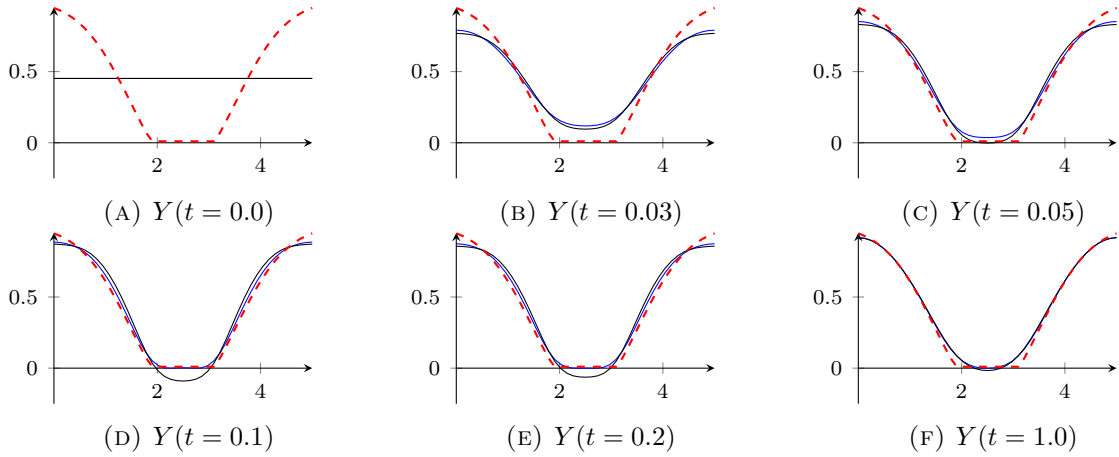
\begin{figure}
\begin{subfigure}[b]{0.32\textwidth}
\begin{tikzpicture}
\begin{axis}[three_style,
ymin=-0.25,
ymax=0.95,
]
\addplot[blue] table {figures/fig06/fig06_data1_0.dat}; 
\label{pgfplots:thin_film_gamma_ex4c_state_gamma0}
\addplot[black] table {figures/fig06/fig06_data2_0.dat}; 
\label{pgfplots:thin_film_gamma_ex4c_state_gamma002}
\addplot[red,thick,dashed] table {figures/fig06/fig06_y_tilde.dat};
\label{pgfplots:thin_film_gamma_ex4c_target}
\end{axis}
\end{tikzpicture}
\caption{$Y(t=0.0$)}
\end{subfigure}
\hfill
\begin{subfigure}[b]{0.32\textwidth}
\begin{tikzpicture}
\begin{axis}[three_style,
ymin=-0.25,
ymax=0.95,
]
\addplot[blue] table {figures/fig06/fig06_data1_15.dat}; 
\addplot[black] table {figures/fig06/fig06_data2_15.dat}; 
\addplot[red,thick,dashed] table {figures/fig06/fig06_y_tilde.dat};
\end{axis}
\end{tikzpicture}
\caption{$Y(t=0.03$)}
\end{subfigure}
\hfill
\begin{subfigure}[b]{0.32\textwidth}
\begin{tikzpicture}
\begin{axis}[three_style,
ymin=-0.25,
ymax=0.95,
]
\addplot[blue] table {figures/fig06/fig06_data1_24.dat}; 
\addplot[black] table {figures/fig06/fig06_data2_24.dat}; 
\addplot[red,thick,dashed] table {figures/fig06/fig06_y_tilde.dat};
\end{axis}
\end{tikzpicture}
\caption{$Y(t=0.05$)}
\end{subfigure} \\
\begin{subfigure}[b]{0.32\textwidth}
\begin{tikzpicture}
\begin{axis}[three_style,
ymin=-0.25,
ymax=0.95,
]
\addplot[blue] table {figures/fig06/fig06_data1_50.dat}; 
\addplot[black] table {figures/fig06/fig06_data2_50.dat}; 
\addplot[red,thick,dashed] table {figures/fig06/fig06_y_tilde.dat};
\end{axis}
\end{tikzpicture}
\caption{$Y(t=0.1$)}
\end{subfigure}
\hfill
\begin{subfigure}[b]{0.32\textwidth}
\begin{tikzpicture}
\begin{axis}[three_style,
ymin=-0.25,
ymax=0.95,
]
\addplot[blue] table {figures/fig06/fig06_data1_100.dat}; 
\addplot[black] table {figures/fig06/fig06_data2_100.dat}; 
\addplot[red,thick,dashed] table {figures/fig06/fig06_y_tilde.dat};
\end{axis}
\end{tikzpicture}
\caption{$Y(t=0.2$)}
\end{subfigure}
\hfill
\begin{subfigure}[b]{0.32\textwidth}
\begin{tikzpicture}
\begin{axis}[three_style,
ymin=-0.25,
ymax=0.95,
]
\addplot[blue] table {figures/fig06/fig06_data1_500.dat}; 
\addplot[black] table {figures/fig06/fig06_data2_500.dat}; 
\addplot[red,thick,dashed] table {figures/fig06/fig06_y_tilde.dat};
\end{axis}
\end{tikzpicture}
\caption{$Y(t=1.0$)}
\end{subfigure}
\caption{Target $\tilde{Y}$~(\ref{pgfplots:thin_film_gamma_ex4c_target}) and optimal states $Y$ for $\gamma =0.02$~(\ref{pgfplots:thin_film_gamma_ex4c_state_gamma0}) and $\gamma=0$~(\ref{pgfplots:thin_film_gamma_ex4c_state_gamma002}) at different times.} \label{fig:thin_film_comp_vgl_gamma}
\end{figure}

\printbibliography

\end{document}